\newtheorem{theorem}{Theorem}[section]
\newtheorem{lemma}[theorem]{Lemma}
\newtheorem{prop}[theorem]{Proposition}
\newtheorem{thm}{Theorem}
\newtheorem{crlr}[theorem]{Corollary}
\theoremstyle{definition}
\newtheorem{rem}[theorem]{Remark}
\newtheorem*{ack*}{Acknowledgements}
\numberwithin{equation}{section}
\newcommand{\grw}{\mathbin{\rotatebox[origin=c]{-45}{$\infty$}}}
\title{On the $\Sigma$-invariants of wreath products}
\author{Luis Augusto de Mendon\c{c}a}
\address{Department of Mathematics, University of Campinas (UNICAMP), rua S\'{e}rgio Buarque de Holanda, 651, 13083-859, Campinas-SP, Brazil.}
\email{luismendonca@ime.unicamp.br}
\keywords{Sigma theory, wreath product, twisted conjugacy}
\subjclass[2010]{20F65, 20E22, 20E45}
\begin{document}

\begin{abstract}
We present a full description of the Bieri-Neumann-Strebel invariant of restricted permutational wreath products of groups. We also give partial results
about the $2$-dimensional homotopical invariant of such groups. These results may be turned into a full picture of these invariants when the abelianization of the 
basis group is infinite. We apply these descriptions to the study of the Reidemeister number of automorphisms of wreath products in some specific cases. 
\end{abstract}

\maketitle

\begin{section}{Introduction}

In this paper we study the so called $\Sigma$-invariants of restricted permutational wreath products of groups. The $\Sigma$-invariants of a group are some
subsets of its character sphere and contain a lot of information on finiteness properties of its subgroups. Their definitions and most general results 
appeared in a series of papers by Bieri, Neumann, Strebel, Renz (\cite{BNS},\cite{BieriRenz},\cite{BieriStrebel}) and others.

Let $\Gamma$ be a finitely generated group. The \textit{character sphere} $S(\Gamma)$ is the set of non-zero homomorphisms $\chi: \Gamma \to \mathbb{R}$ (these homomorphisms
are called \textit{characters}) modulo 
the equivalence relation given by $\chi_1 \sim \chi_2$ if there is some $r \in \mathbb{R}_{>0}$ such that $\chi_2 = r \chi_1$. The class of $\chi$ will
be denoted by $[\chi]$. The character sphere may be seen as the $(n-1)$-sphere
in the vector space $Hom(\Gamma, \mathbb{R}) \simeq \mathbb{R}^n$, where $n$ is the torsion-free rank of the abelianization of $\Gamma$. 

In this paper we deal with the homotopical invariants in low dimension, that is, those denoted by $\Sigma^1(\Gamma)$ and $\Sigma^2(\Gamma)$, the second
defined when $\Gamma$ is finitely presented. They are defined as certain subsets of $S(\Gamma)$; we leave the details to Section \ref{sigmabackground}.
Their most important feature is that they 
classify the properties of being finitely generated and being finitely presented for subgroups of $\Gamma$ containing the derived subgroup $[\Gamma, \Gamma]$
(see Theorem \ref{thmfund}).

Recall that a group $\Gamma$ is of type $F_n$ if there is a $K(\Gamma,1)$-complex with compact $n$-skeleton. A group is of type $F_1$ (resp. $F_2$) if 
and only if it is finitely generated (resp. finitely presented). The homological version of the property $F_n$ is the property $FP_n$: a group
$\Gamma$ is of type $FP_n$ if the trivial $\mathbb{Z}\Gamma$-module $\mathbb{Z}$ admits a projective resolution
\[\mathcal{P}: \ldots \to P_n \to P_{n-1} \to \ldots \to P_1 \to P_0 \to \mathbb{Z} \to 0\]
with $P_j$ finitely generated for all $j \leq n$. Again a group is of type $FP_1$ if and only if it is finitely generated, but the properties $F_n$
are in general stronger then $FP_n$. In particular, $FP_2$ is strictly weaker then finite presentability (\cite{BestvinaBrady},\cite{Bieri}).

There are some higher homotopical invariants, denoted by $\Sigma^n(\Gamma)$, which are defined for groups of type $F_n$ and fit in a decreasing sequence
\[S(\Gamma) \supseteq \Sigma^1(\Gamma) \supseteq \Sigma^2(\Gamma) \supseteq \ldots \supseteq \Sigma^n(\Gamma) \supseteq \ldots\]
whenever defined. They classify the property $F_n$ for subgroups above the derived subgroup. Similarly, the homological invariants
$\Sigma^n(\Gamma; \mathbb{Z})$ are defined for groups of type $FP_n$ and classify this same property for subgroups
containing the derived subgroup. In general $\Sigma^1(\Gamma) = \Sigma^1(\Gamma; \mathbb{Z})$ if $\Gamma$ is finitely generated and ${	\Sigma}^n(\Gamma) \subseteq \Sigma^n(\Gamma; \mathbb{Z})$
if $\Gamma$ has type $F_n$ (\cite{BieriRenz}).

All these invariants are in general hard to describe for specific groups, and this has been done only for a few classes of groups. For right-angled
Artin groups, for example, the invariant $\Sigma^1$ was computed first by Meier and VanWyk \cite{MV} and then generalized for higher dimensions 
(for both homotopical and homological versions) by the same authors and Meinert \cite{MMV}. This is connected with the existence of subgroups of these groups having a wide variety
of finiteness properties, as shown by Bestvina and Brady \cite{BestvinaBrady}. Another line of generalization was followed by Meinert, who computed the invariants
in dimension $1$ for graph products \cite{Meinert2}.

Another interesting group for which the invariants are known is Thompson's group $F$. Both homological and homotopical invariants have been computed in all dimensions
by Bieri, Geoghegan and Kochloukova \cite{BGK}. The $\Sigma^2$-invariants of the generalized Thompson groups $F_{n, \infty}$ were then computed by Kochloukova
\cite{Kochloukova2} and recently Zaremsky extended it to higher dimensions \cite{Zaremsky}.

We considered the homotopical invariants $\Sigma^1$ and $\Sigma^2$ of wreath products. Recall that given $H$ and $G$ groups and a $G$-set 
$X$, the wreath product $H \wr_X G$ is defined as the semi-direct product $M \rtimes G$, where $M = \oplus_{x \in X} H_x$ is the direct sum (that is, the 
restricted direct product) of copies of $H$ indexed by $X$ and $G$ acts by permuting this copies according to the action on $X$. We shall always assume that
$X \neq \emptyset$ and $H \neq 1$, to avoid trivial cases. The finiteness properties
for these groups were studied by Cornulier in \cite{Cornulier} (finite generation and finite presentability) and more recently by Bartholdi, Cornulier and Kochloukova 
\cite{BCK} (properties $FP_n$ and $F_n$).

\begin{rem}
By $H_x$ we always mean the copy of $H$ associated to the element $x \in X$. On the other hand, $G_x$ denotes the stabilizer of $x \in X$ in the action 
of $G$. To avoid confusion, we will always denote by $G$ the group that acts.
\end{rem}

Our first result is the full description of $\Sigma^1$.

\begin{thm}  \label{thmSigma1}
Let $\Gamma = H \wr_X G$ be a finitely generated wreath product and let $\chi: \Gamma \to \mathbb{R}$ be a non-trivial character. We set 
$M = \oplus_{x \in X} H_x \subseteq \Gamma$.
\begin{enumerate}
 \item If $\chi |_M = 0$, then $[\chi] \in \Sigma^1(\Gamma)$ if and only if $[\chi |_G] \in \Sigma^1(G)$ and $\chi |_{G_x} \neq 0$ for all $x \in X$.
 \item If $\chi |_M \neq 0$, then $[\chi] \in \Sigma^1(\Gamma)$ if and only if at least one of the following conditions holds:
        \begin{itemize}
         \item[(a)] There exist $x,y \in X$ with $x \neq y$, $\chi |_{H_x} \neq 0$ and $\chi |_{H_y} \neq 0$;
         \item[(b)] There exists $x \in X$ with $\chi |_{H_x} \neq 0$ and $[\chi |_{H_x}] \in \Sigma^1(H)$ or
         \item[(c)] $\chi |_{G} \neq 0$.
         \end{itemize} 
\end{enumerate}
\end{thm}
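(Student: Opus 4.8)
The whole argument runs through the standard combinatorial description of $\Sigma^1$: for a finitely generated group $\Lambda$ with finite generating set $S$, $[\psi]\in\Sigma^1(\Lambda)$ if and only if the full subgraph of the Cayley graph $\mathrm{Cay}(\Lambda,S)$ spanned by the submonoid $\Lambda_\psi=\{\lambda\in\Lambda:\psi(\lambda)\ge 0\}$ is connected (independently of $S$). For $\Gamma=H\wr_XG=M\rtimes G$ I would fix the generating set $S=S_G\sqcup S_M$, where $S_G$ is a finite symmetric generating set of $G$ (embedded in $\Gamma$ as a complement to $M$) and $S_M=\{\,h^{(j)}_{x_i}\,\}$, with $h^{(1)},\dots,h^{(m)}$ generating $H$ and $x_1,\dots,x_k$ representatives of the finitely many $G$-orbits on $X$; finiteness of $S$ and the fact that it generates $\Gamma$ are exactly the finite generation hypothesis. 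Two elementary observations then do most of the work. First, right multiplication of $(m,g)$ by a generator of $S_M$ alters only the $H$-coordinates of $m$ at the positions $gx_1,\dots,gx_k$ and changes $\chi$ by $\pm\chi_{x_i}(h^{(j)})$, where for $z\in X$ I write $\chi_z$ for the composite $H\xrightarrow{\ \cong\ }H_z\hookrightarrow\Gamma\xrightarrow{\ \chi\ }\mathbb R$; right multiplication by a generator of $S_G$ leaves $m$ fixed and changes $\chi$ by the value of $\chi|_G$ on that generator. Second, since $\chi$ is conjugation-invariant, $\chi_z$ and the subset $\chi(G_z)\subseteq\mathbb R$ depend only on the $G$-orbit of $z$; hence if $\chi|_M\ne 0$ then $\chi_z\ne 0$ for all $z$ in at least one orbit, and if in addition (a) fails then such a $z$ is unique and $G$-fixed, so $\Gamma$ splits as $H_z\times\bigl(H\wr_{X\setminus\{z\}}G\bigr)$ on which $\chi$ restricts to $\chi_z$ on the first factor and to (character of $G$)$\circ$(retraction onto $G$) on the second.

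For the negative implications I would use the behaviour of $\Sigma^1$ under retractions: if $\rho\colon\Gamma\to Q$ is a retraction with a section and $\chi=\bar\chi\circ\rho$, then $[\chi]\in\Sigma^1(\Gamma)$ forces $[\bar\chi]\in\Sigma^1(Q)$, since $\rho$ carries the connected subgraph onto $Q_{\bar\chi}$ (using the section for surjectivity). Applied to $\Gamma\to G$ this gives the necessity of $[\chi|_G]\in\Sigma^1(G)$ in case (1); applied to the projection onto the fixed copy $H_z$ in the situation above, it shows that if none of (a), (b), (c) holds then $\chi$ is pulled back from $H_z$ and restricts there to a character outside $\Sigma^1(H)$, whence $[\chi]\notin\Sigma^1(\Gamma)$. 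The remaining necessary condition, $\chi|_{G_x}\ne 0$ in case (1), I would prove directly: when $\chi|_{G_x}=0$, the orbit-invariance makes $\chi(g)$ for $g$ with $gx=z$ a function of $z$ alone on the orbit of $x$, it is unbounded below, so pick $y$ in that orbit with negative value; an element of $M$ supported only at $y$ lies in $\Gamma_\chi$ (as $\chi|_M=0$), yet the $y$-coordinate can only be changed with the $G$-part at a position of negative $\chi$-value, so this element is in its own connected component of $\Gamma_\chi$.

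For the positive implications in case (1) I would argue in $\mathrm{Cay}(\Gamma,S)$ directly: $[\chi|_G]\in\Sigma^1(G)$ makes the pure-$G$ vertices in $\Gamma_\chi$ connected, and a vertex $(m,g)$ is brought to one by clearing the coordinates of $m$ in turn, each time first moving the $G$-part to a position of large $\chi$-value inside the relevant coset $\{h:hx_i=z\}$ — such positions exist precisely because $\chi(G_{x_i})\ne 0$ — so the clearing never leaves $\Gamma_\chi$. In case (2), the subcase (b) with $\chi|_G=0$, and every subcase in which some $\chi$-surviving coordinate is $G$-fixed so that the splitting $\Gamma=H_z\times\Gamma'$ is available with $\chi$ non-trivial on both factors, reduce to two direct-product lemmas proved by a "pumping'' idea: (i) if $\Gamma_2$ is finitely generated and $[\chi_1]\in\Sigma^1(\Gamma_1)$ then $[\chi_1\circ\mathrm{pr}_1]\in\Sigma^1(\Gamma_1\times\Gamma_2)$; (ii) if $\Gamma_1,\Gamma_2$ are finitely generated and $\chi_1,\chi_2$ are both non-zero then $[\chi_1\oplus\chi_2]\in\Sigma^1(\Gamma_1\times\Gamma_2)$ — for (ii) one alternately raises $\chi$ monotonically by multiplying one factor by powers of a generator of positive value (which stays in $\Gamma_\chi$) and spends the accumulated slack to drag the other factor along a fixed finite path to $1$.

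The genuinely new work, and the main obstacle, is case (2) when (a) holds but no $\chi$-surviving coordinate is $G$-fixed, so no direct splitting is available and one must pump inside the wreath product itself. The plan is to keep two independent "ballasts'': from $(m,g)$ one adds a large power of a suitable generator of $H$ to a $\chi$-surviving coordinate touchable from $g$, and also to a second, distinct $\chi$-surviving coordinate — (a) guarantees two such coordinates exist, and a short case analysis (whether the two surviving orbits coincide, and whether $g$ lies in a point stabilizer) shows they can be chosen distinct. With both ballasts loaded, $\chi$ is arbitrarily large, so one may move the $G$-part freely, delete the remaining finitely many coordinates of $m$, then delete the two ballasts one after the other — each deletion, including of the original $H$-datum hidden under a ballast, being covered by the slack still stored in the other ballast; by having grown the last ballast at a coordinate that is itself an orbit representative, its deletion is performed with the $G$-part at the identity, so the path ends at $(1,1)\in\Gamma_\chi$. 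The delicate points — choosing the exponents large enough to dominate the fixed word-lengths and the fixed quantity $\chi(m)$, sequencing the deletions so that a ballast is always present, and, when $\chi|_G\ne 0$, first sliding the $G$-part monotonically in the $\chi$-direction so that the terminal vertex genuinely lies in $\Gamma_\chi$ — are where the bookkeeping concentrates, and are also the reason the method does not reach beyond $\Sigma^1$.
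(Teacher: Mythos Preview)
Your proposal is essentially correct, but it takes a more hands-on route than the paper at the two central points.

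For part~(1), the paper obtains sufficiency by invoking the $m=1$ case of the Bartholdi--Cornulier--Kochloukova theorem on $\Sigma^m(H\wr_XG;\mathbb Z)$, whereas you reprove that case directly by moving the $G$-part inside $G_{\chi|_G}$ and using $\chi|_{G_{x_i}}\neq 0$ to reach each coset $\{g:gx_i=z\}$ at high $\chi$-level. For necessity of $\chi|_{G_x}\neq 0$, the paper argues positively: assuming $[\chi]\in\Sigma^1(\Gamma)$ it shows $M=\langle{}^{G_\chi}Y\rangle$, hence $X=G_\chi\cdot x_1$, hence $\chi|_{G_{x_1}}\neq 0$. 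Your argument is the contrapositive disconnection, which is fine---just note that your element supported at $y$ is not literally ``in its own connected component'' (one can still move the $G$-part and other coordinates); the correct statement is that its $y$-coordinate can never be altered inside $\Gamma_\chi$, so it is not in the component of the identity.

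For part~(2), the paper's hard case (some $x_1\in T$ with $G\cdot x_1\neq\{x_1\}$) is handled cleanly via Renz's $\Sigma^1$-criterion: set $t={}^{g_1}h$ with $g_1x_1\neq x_1$ and $\chi(h)>0$, then exhibit words $w_x$ representing $t^{-1}xt$ with strictly higher valuation. Your two-ballast manoeuvre is the ``unpacked'' version of exactly this idea---$t$ is a ballast that commutes with all of $H_{x_1}$---and it does go through, but the bookkeeping you flag (ordering of ballast sizes, arranging the last ballast at an orbit representative so the terminal $G$-part is $1$, handling $g\in G_{x_1}$ separately) is genuinely needed and is precisely what Renz's criterion packages for you. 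The remaining cases (all of $T$ consisting of $G$-fixed points) are handled identically by both approaches, via the splitting $\Gamma\cong\prod_{x\in T}H_x\times(H\wr_{X\setminus T}G)$ and the direct-product formula.

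In short: your argument trades the black boxes (Renz, BCK) for explicit Cayley-graph paths. This is more elementary and self-contained, at the cost of heavier case analysis; the paper's route is shorter and positions the $\Sigma^1$ computation as the base of the $\Sigma^2$ work that follows, where Renz's criterion is used again.
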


Part $(1)$ of the above theorem generalizes Theorem $8.1$ in \cite{BCK} in dimension $1$, where $H$ has infinite abelianization by hypothesis. For regular
wreath products, that is, $\Gamma = H \wr_G G$, the action being by multiplication on the left, the $\Sigma^1$-invariant was already computed by Strebel in
Proposition C1.18 in \cite{Strebel}.

For the invariant $\Sigma^2$ we consider two cases, the same as in the theorem above. For characters $\chi: H \wr_X G \to \mathbb{R}$ such that $\chi |_M \neq 0$
the criteria developed by Renz \cite{Renz} are especially powerful, and have allowed us to prove part $(2)$ of Theorem \ref{thmSigma1} and a similar result 
for $\Sigma^2$. 

\begin{thm}   \label{thmB}
Let $\Gamma = H \wr_X G$ be a finitely presented wreath product and let $\chi: \Gamma \to \mathbb{R}$ be a non-trivial character. If the set
 \[ T = \{x \in X \hbox{ } | \hbox{ } \chi |_{H_x} \neq 0\}\]
has at least $3$ elements, then $[\chi] \in \Sigma^2(\Gamma)$.  
\end{thm}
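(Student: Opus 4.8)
The plan is to establish $[\chi]\in\Sigma^2(\Gamma)$ by verifying Renz's combinatorial criterion (\cite{Renz}) on a suitably chosen finite presentation of $\Gamma$. Two preliminary observations set the stage. First, since $\chi$ takes values in the abelian group $\mathbb{R}$ we have $\chi(ghg^{-1})=\chi(h)$ for all $g,h\in\Gamma$; taking $g\in G$ and $h\in H_x$, and noting that $ghg^{-1}\in H_{gx}$, we conclude that the set $T$ is a union of $G$-orbits. In particular we may fix three pairwise distinct elements $x_1,x_2,x_3\in T$; then $H_{x_1},H_{x_2},H_{x_3}$ pairwise commute inside $M$, and $\chi$ restricts to a non-zero character on each of them. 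Second, since $\Gamma$ is finitely presented, $H$ and $G$ are finitely presented and $X$ satisfies Cornulier's finiteness condition for finite presentability of $H\wr_X G$ (\cite{Cornulier},\cite{BCK}); combining finite presentations of $H$ and $G$ with finite sets of orbit representatives of $X$ and of $X\times X$, one produces a finite presentation $\Gamma=\langle S\mid R\rangle$ in which every relator has one of finitely many shapes: a relator internal to a single copy $H_x$, a commutator $[a,b]$ with $a\in H_x$, $b\in H_y$ and $x\neq y$, a relator of $G$, or a relator of the form $sas^{-1}=a'$ with $s$ a generator of $G$ and $a\in H_x$, $a'\in H_{sx}$.

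Since $|T|\geq 2$, part $(2)(a)$ of Theorem \ref{thmSigma1} already gives $[\chi]\in\Sigma^1(\Gamma)$, so the superlevel sets $\widetilde Y_{\chi\geq t}$ in the universal cover $\widetilde Y$ of the presentation complex of $\langle S\mid R\rangle$ form an essentially connected system; by Renz's criterion it remains to show that this system is essentially $1$-connected, i.e. that there is a constant $\lambda\geq 0$, independent of $t$, such that reading any relator of $R$ starting from any vertex $g$ with $\chi(g)\geq t$ yields a null-homotopy supported on $2$-cells all of whose vertices $v$ satisfy $\chi(v)\geq t-\lambda$. The three commuting copies supply the mechanism. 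Choosing $a_i\in H_{x_i}$ with $\chi(a_i)>0$, the subgroup $D=H_{x_1}\times H_{x_2}\times H_{x_3}$ is finitely presented, and $\chi$ restricts to a character of $D$ that is non-zero on the factor $H_{x_1}$ and whose restriction to $H_{x_2}\times H_{x_3}$ lies in $\Sigma^1(H_{x_2}\times H_{x_3})$; hence $[\chi|_D]\in\Sigma^2(D)$, by the standard fact that a character of a direct product $A\times B$ of groups of type $F_2$ lies in $\Sigma^2$ as soon as it is non-zero on $A$ and restricts on $B$ to a character in $\Sigma^1(B)$. So loops supported in a translate of $D$ have fillings of bounded depth; an arbitrary relator of $\Gamma$ is then filled by first using powers of the $a_i$ and the commutation relators to push its boundary process up into such a $D$-region, the point being that $\chi(D)=\chi(H_{x_1})+\chi(H_{x_2})+\chi(H_{x_3})$ meets every prescribed window of $\mathbb{R}$, so the process never has to descend below $t-\lambda$.

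The main obstacle is the presence of the relators involving $G$: a relator $sas^{-1}=a'$ transports an element of $H_x$ to one of $H_{sx}$, so a loop with large $G$-displacement threatens to force the filling to descend proportionally to that displacement rather than by a bounded amount. This is exactly where a third commuting copy is needed where two sufficed for $\Sigma^1$: the extra commuting direction is what allows the boundary process of a $2$-cell to stay above $t-\lambda$ while the $G$-coordinate of a van Kampen diagram varies. One handles this by distinguishing two cases. If $\chi|_G=0$, the relators of $G$ lie in a single $\chi$-level and do not affect the depth, so only the relators inside $M$ matter and the $D$-argument applies directly. If $\chi|_G\neq 0$, one uses in addition the discreteness of $\chi(G)$ in $\mathbb{R}$, together again with the commuting copies, to ``pump'' the $G$-coordinate back up after each excursion. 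Finally, since all the filling diagrams involved are assembled from translates of $D$ and of the finitely presented factors $H_x$ — whose presentation $2$-complexes have simply connected universal covers — the estimate obtained is genuinely homotopical rather than merely homological, which is what is needed to conclude $[\chi]\in\Sigma^2(\Gamma)$.
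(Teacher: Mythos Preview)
Your proposal is a sketch in the right spirit but has genuine gaps at the two places that matter most.

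First, the treatment of the $G$-relators is not correct. Saying ``if $\chi|_G=0$ the relators of $G$ lie in a single $\chi$-level and do not affect the depth'' misreads Renz's criterion: once a height element $t\in H_{x_i}$ is fixed, the words $w_z$ attached to the $G$-generators $z$ (those with $z\cdot x_i\neq x_i$) involve $t$, so the transformed relator $\hat s$ of a $G$-relator $s$ is a genuinely mixed word, and the task is to build a diagram for $\hat s$ with strictly larger valuation than $s$. Commuting with three copies of $H$ does not by itself produce such diagrams. In the paper this step is handled not by a direct diagram construction but by invoking a separate result (Theorem~\ref{ThmKochloukova}) applied to the subgroup $\langle h,G\rangle\cong\mathbb Z\wr_X G$, whose abelian kernel forces $[\chi_0]\in\Sigma^2$; your sketch offers no substitute for this input. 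Second, the appeal to ``the discreteness of $\chi(G)$ in $\mathbb R$'' is simply false in general.

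For comparison, the paper does not run a single Renz argument across all cases. When $T$ is finite it passes to the finite-index subgroup $\Gamma_1=H\wr_X B$ with $B=\bigcap_{x\in T}G_x$, which splits as $\bigl(\prod_{x\in T}H_x\bigr)\times(H\wr_{X\smallsetminus T}B)$, and then the direct product formula (Theorem~\ref{GehrkeThm}) together with $|T|\ge 3$ and Theorem~\ref{finiteindexthm} give $[\chi]\in\Sigma^2(\Gamma)$ immediately. Only when $T$ is infinite (hence some $G$-orbit inside $T$ is infinite) does the paper carry out the explicit Renz verification, and there it uses five auxiliary conjugates $t_1,\dots,t_5$ of the height element --- not three --- precisely because the commutator relators, after the $r\mapsto\hat r$ transformation, can involve up to five distinct copies of $H$.
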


The cases where $T$ is non-empty but has less than $3$ elements can be dealt with using the direct product formula (see Theorem \ref{GehrkeThm}) and the results on the $\Sigma^1$-invariant 
(see Theorem \ref{sigma2viarenz} and the comment right before it).

For the characters $\chi: \Gamma \to \mathbb{R}$ with $\chi |_M = 0$ we were not able to obtain a complete result, by lack of a general method to study
necessary conditions for $[\chi] \in \Sigma^2(\Gamma)$. By the results of Bartholdi, Cornulier and Kochloukova on homological invariants, the most general
theorem we can enunciate is the following, where $G_{(x,y)}$ denotes the stabilizer subgroup associated to an element $(x,y)$ of $X^2$, which is equipped with
the diagonal $G$-action.
 
 \begin{thm}  \label{thmC}
 Let $\Gamma = H \wr_X G$ be a finitely presented wreath product and let $\chi: \Gamma \to \mathbb{R}$ be a non-zero character such that $\chi |_M = 0$. 
 Then $[\chi] \in \Sigma^2(\Gamma)$ if all three conditions below hold
 \begin{enumerate}
    \item $[\chi |_G] \in \Sigma^2(G)$;
    \item $[\chi |_{G_x}] \in \Sigma^1(G_x)$ for all $x \in X$ and
    \item $\chi |_{G_{(x,y)}} \neq 0$ for all $(x,y) \in X^2$.
      \end{enumerate}
In general, conditions $(1)$ and $(3)$ are necessary for $[\chi] \in \Sigma^2(\Gamma)$. If we assume further that
the abelianization of $H$ is infinite, then condition $(2)$ is necessary as well.
 \end{thm}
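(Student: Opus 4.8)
The plan is to establish the three implications separately. Throughout, write $\chi = \bar\chi\circ\pi$, where $\pi\colon\Gamma = M\rtimes G\to G$ is the canonical projection and $\bar\chi = \chi|_G$ (which is non-zero, since $\chi$ is); as $\chi|_M = 0$, the monoid $\Gamma_\chi = \{\gamma\in\Gamma : \chi(\gamma)\ge 0\}$ contains $M$, and $\pi$ maps $\Gamma_\chi$ onto $G_{\bar\chi}$.

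\emph{Sufficiency.} Suppose $(1)$, $(2)$ and $(3)$ hold. These already give $[\chi]\in\Sigma^1(\Gamma)$: from $\Sigma^2(G)\subseteq\Sigma^1(G)$ we get $[\bar\chi]\in\Sigma^1(G)$, and $(2)$ forces $\chi|_{G_x}\ne 0$ for all $x$, so Theorem~\ref{thmSigma1}$(1)$ applies; in particular $\Gamma_\chi$ is finitely generated. To promote this to $\Sigma^2$ I would use the natural finite presentation of $\Gamma$ --- generators and relators of $G$, those of one fixed copy $H_{x_0}$, relators implementing the $G$-action on $X$ (so that the $G$-conjugates of $H_{x_0}$ generate $M$), and the commutator relators $[H_x,H_y] = 1$ indexed by orbit representatives of the diagonal $G$-action on $X^2$ (finitely many, as $\Gamma$ is finitely presented) --- and verify the usual criterion for $\Sigma^2$ of a finitely presented group (\cite{Renz}): that finitely many relators, together with their $\Gamma_\chi$-translates, fill every loop in the ascending part of the Cayley $2$-complex. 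Condition $(1)$ handles the relators of the base $G$; condition $(2)$, together with Theorem~\ref{thmSigma1}, handles the relators living inside, and those encoding the action on, a single copy $H_x$; and condition $(3)$ handles the commutator relators --- here the point is that the $G_\chi$-translates of such a relator exhaust the pairs of a given $G$-orbit in $X^2$ precisely when $\bar\chi$ is non-zero on the corresponding stabiliser, which is the same mechanism used for the $X$-stabilisers in Theorem~\ref{thmSigma1}$(1)$, applied now to $X^2$. Equivalently, one may run the relative (monoid) version of the argument of \cite{BCK} for wreath products with $\Gamma_\chi$ in place of $\Gamma$.

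\emph{Necessity.} For $(1)$: $\pi$ is a retraction with section the inclusion $G\hookrightarrow\Gamma$, and $\chi = \bar\chi\circ\pi$, so $[\chi]\in\Sigma^2(\Gamma)$ implies $[\bar\chi]\in\Sigma^2(G)$, since $\Sigma^2$ is inherited by retracts. For $(3)$: the diagonal instances ($G_{(x,x)} = G_x$) already follow from $\Sigma^2(\Gamma)\subseteq\Sigma^1(\Gamma)$ via Theorem~\ref{thmSigma1}$(1)$, while the off-diagonal instances ($G_{(x,y)} = G_x\cap G_y$ with $x\ne y$) follow from $\Sigma^2(\Gamma)\subseteq\Sigma^2(\Gamma;\mathbb Z)$ (\cite{BieriRenz}) and the description in \cite{BCK} of $H_*(M;\mathbb Z)$ in low degrees as a $\mathbb Z G$-module: there the permutation module of the $G$-set $X^2$ occurs, and the requirement that it be of finite type over $\mathbb Z\Gamma_\chi$ forces $\bar\chi|_{G_{(x,y)}}\ne 0$ (this uses $H\ne 1$). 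Finally, for $(2)$ under the extra hypothesis that the abelianization $H^{\mathrm{ab}}$ is infinite: again $\Sigma^2(\Gamma)\subseteq\Sigma^2(\Gamma;\mathbb Z)$, and now $H_1(M;\mathbb Z)\cong H^{\mathrm{ab}}\otimes_{\mathbb Z}\mathbb Z[X]$ is a non-zero $\mathbb Z G$-module built out of the permutation modules $\mathbb Z[G/G_x]$; the Lyndon--Hochschild--Serre spectral sequence of $M\rtimes G$ over $\mathbb Z\Gamma_\chi$ shows that $[\chi]\in\Sigma^2(\Gamma;\mathbb Z)$ forces $H_1(M;\mathbb Z)$, and hence each direct summand $\mathbb Z[G/G_x]$, to be of type $FP_1$ over $\mathbb Z G_{\bar\chi}$; by the standard dictionary between homological finiteness of $\mathbb Z[G/G_x]$ over $\mathbb Z G_{\bar\chi}$ and the $\Sigma^1$-invariant of $G_x$, this is equivalent to $[\bar\chi|_{G_x}]\in\Sigma^1(G_x;\mathbb Z) = \Sigma^1(G_x)$. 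When $H^{\mathrm{ab}}$ is trivial this module vanishes and the argument gives nothing, which is why the converse is incomplete in that case. The step I expect to be the main obstacle is the sufficiency --- specifically the handling of the $2$-cells indexed by $X^2$, where all three conditions are used simultaneously.
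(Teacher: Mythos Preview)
Your outline for necessity of $(1)$ and of $(2)$ (via the retract property and via $\Sigma^2\subseteq\Sigma^2(\,\cdot\,;\mathbb Z)$ together with \cite{BCK}) matches the paper. The other two parts diverge from the paper, and one of them has a genuine gap.

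\textbf{Sufficiency.} The paper does not carry out the Renz-style relator-by-relator verification you sketch. Instead it builds, following Kropholler--Martino, a $K(M,1)$ as the finitary product $Z=\bigoplus_{x\in X}Y_x$ of copies of a $K(H,1)$, lifts the $G$-action to the universal cover $E$, and applies Meinert's theorem (Theorem~B of \cite{Meinert}): if $\Gamma$ acts on a $1$-connected $2$-complex with finitely many orbits of cells and $[\chi|_{\Gamma_c}]\in\Sigma^{2-\dim c}(\Gamma_c)$ for every cell $c$, then $[\chi]\in\Sigma^2(\Gamma)$. The stabilisers of $0$-, $1$-, $2$-cells are (up to conjugation and finite index) $G$, $G_x$, $G_{(x,y)}$, so conditions $(1)$--$(3)$ are exactly the hypotheses of Meinert's criterion. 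Your direct approach is plausible in spirit, but translating $[\chi|_{G_x}]\in\Sigma^1(G_x)$ into the required van~Kampen diagrams for the relators $[e,y]$ is not a routine matter, and your sketch does not indicate how to do it; the action-on-a-complex route packages this cleanly.

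\textbf{Necessity of $(3)$.} Here there is a real gap. You invoke the homological description of $H_*(M;\mathbb Z)$ from \cite{BCK} and claim that the permutation module of $X^2$ appears ``using $H\ne 1$''. But by K\"unneth, the $X^2$-indexed summand of $H_2(M;\mathbb Z)$ is $\bigoplus_{\{x\ne y\}} H_1(H)\otimes H_1(H)$; if $H$ is perfect this vanishes, and the argument yields nothing. This is precisely why Theorem~\ref{BCKThm} carries the hypothesis that $H^{ab}$ be infinite for its necessity direction. The paper instead gives a direct combinatorial proof (its Proposition establishing that $\chi|_{G_{(x,y)}}=0$ forces $[\chi]\notin\Sigma^2(\Gamma)$) which works for \emph{any} nontrivial $H$: one passes to the graph-wreath $\Gamma_0=(\ast_{x\in X}H_x)\rtimes G$, observes that $G_\chi$ cannot act with finitely many orbits on $G\cdot(x,y)$ when $\chi|_{G_{(x,y)}}=0$, and from this builds a strictly increasing exhaustion $N_1\subsetneq N_2\subsetneq\cdots$ of the kernel $N=\ker(\Gamma_0\twoheadrightarrow\Gamma)$ by $G_\chi$-invariant normal subgroups (generated by commutators $[H_x,H_y]$ over growing $G_\chi$-stable subsets of $X^2$). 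This shows that the relation module $R(\chi)$ cannot be finitely generated over $F(\mathcal X,\chi)$, hence $Cay(\Gamma;\langle\mathcal X|\mathcal R\rangle)_\chi$ is never $1$-connected. Nothing about $H^{ab}$ enters. Your homological shortcut does not deliver the unconditional statement the theorem asserts.
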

 
 Restrictions on the abelianization of the basis group $H$ have been recurrent in the study of finiteness properties of wreath products and related 
 constructions. Besides appearing in the work of Bartholdi, Cornulier and Kochloukova \cite{BCK}, they also pop up in the paper by Kropholler and Martino \cite{KrophollerMartino}, 
 which deals with the wider class of \textit{graph-wreath products} (see Section \ref{gwp}) from a more homotopical point of view.
 
 Finally, we consider some applications to twisted conjugacy. Recall that given an automorphism $\varphi$ of a group $G$, the \textit{Reidemeister number} $R(\varphi)$
 is defined as the number of orbits of the twisted conjugacy action, which is given by $g \cdot h := g h \varphi(g^{-1})$, for $g,h \in G$. 
 
 Exploring the connections between $\Sigma$-theory and 
 Reidemeister numbers, as found out by Koban and Wong \cite{KobanWong} and Gon\c{c}alves and Kochloukova \cite{GonKoch}, we obtain some results about the Reidemeister numbers
 of automorphisms contained in some subgroups of finite index of $Aut(H \wr_X G)$, under some relatively strong restrictions. For precise statements, see Corollaries 
 \ref{crlr1} and \ref{crlr2}.
 
 \end{section}

\begin{section}{Background on the \texorpdfstring{$\Sigma$}{Sigma}-invariants} \label{sigmabackground}
 Let us start by recalling the definition of the invariant $\Sigma^1$. For a finitely generated group $\Gamma$ and a finite generating set $\mathcal{X} \subseteq \Gamma$, we 
 consider the Cayley graph $Cay(\Gamma; \mathcal{X})$. Its vertex set is $\Gamma$ and two vertices $\gamma_1$ and $\gamma_2$ are connected by and edge if and 
 only if there is some $x \in \mathcal{X}^{\pm 1}$ such that $\gamma_2 = \gamma_1 x$ (therefore $\Gamma$ acts on the left). This graph is always connected. 
 Given a non-zero character $\chi: \Gamma \to \mathbb{R}$ we can define the 
 submonoid 
 \[ \Gamma_{\chi} = \{\gamma \in \Gamma \hbox{ } | \hbox{ } \chi(\gamma) \geq 0\}.\]
 Notice that $\Gamma_{\chi_1} = \Gamma_{\chi_2}$ if and only if $\chi_1$ and $\chi_2$ represent the same class in the character sphere $S(\Gamma)$.
 The full subgraph spanned by $\Gamma_{\chi}$, which we denote by $Cay(\Gamma; \mathcal{X})_{\chi}$, may not be connected. We put:
  \[ \Sigma^1(\Gamma) = \{ [\chi] \in S(\Gamma) \hbox{ } | \hbox{ } Cay(\Gamma; \mathcal{X})_{\chi} \hbox{ is connected} \}.\]
 It can be shown that this definition does not depend on the (finite) generating set $\mathcal{X}$. This invariant is known as the \textit{Bieri-Neumann-Strebel}
 invariant (or simply BNS-invariant), in reference to the authors who studied it first \cite{BNS}.
 
 The invariant $\Sigma^2$ is defined similarly. If $\Gamma$ is finitely presented and $\langle \mathcal{X} | \mathcal{R} \rangle$ is a finite presentation,
 we consider the Cayley complex $Cay(\Gamma;  \langle \mathcal{X} | \mathcal{R} \rangle)$. This complex is obtained from the Cayley graph by gluing 
 $2$-dimension cells with boundary determined by the loops defined by the relations $r \in R$, for each base point in $\Gamma$. The resulting complex is always
 $1$-connected. Again we define
 $Cay(\Gamma;  \langle \mathcal{X} | \mathcal{R} \rangle)_{\chi}$ to be the full subcomplex spanned by $\Gamma_{\chi}$. 
 The $1$-connectedness of this complex depends on the choice of the presentation. We
 define $\Sigma^2(\Gamma)$ as the subset of $S(\Gamma)$ containing exactly all the classes $[\chi]$ of characters such that 
 $Cay(\Gamma; \langle \mathcal{X} | \mathcal{R} \rangle)_{\chi}$ is $1$-connected for some finite presentation $
 \langle \mathcal{X} | \mathcal{R} \rangle$ of $\Gamma$. More details on these definitions may be found in \cite{Meinert}.
 
 The main feature of these invariants is that they classify the related finiteness properties for subgroups containing the derived subgroup. For the invariants
 $\Sigma^1$ and $\Sigma^2$, this can be stated as follows.

\begin{theorem}[\cite{BNS}, \cite{RenzThesis}] \label{thmfund}
Suppose that $\Gamma$ is finitely generated and let $N \subseteq \Gamma$ be a subgroup such that $[\Gamma, \Gamma] \subseteq N$. Then $N$ is finitely generated
if and only if 
\[ \Sigma^1(\Gamma) \supseteq \{[\chi] \in S(\Gamma) \hbox{ } | \hbox{ } \chi |_N = 0\}.\]
Similarly, if $\Gamma$ is further finitely presented then $N$ is finitely presented
if and only if 
\[ \Sigma^2(\Gamma) \supseteq \{[\chi] \in S(\Gamma) \hbox{ } | \hbox{ } \chi |_N = 0\}.\]
\end{theorem}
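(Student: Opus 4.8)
The plan is to derive both equivalences directly from the Cayley-graph and Cayley-complex definitions, proving the $\Sigma^1$-statement first and then upgrading to $\Sigma^2$ one dimension higher. Throughout, set $Q=\Gamma/N$; since $[\Gamma,\Gamma]\subseteq N$ this is a finitely generated abelian group, and the characters vanishing on $N$ are exactly those factoring through $Q$, so $\{[\chi]:\chi|_N=0\}$ is a subsphere $S(\Gamma,N)\cong S^{r-1}$ with $r=\mathrm{rank}\,Q$. If $r=0$ then $N$ has finite index and is therefore finitely generated (and finitely presented when $\Gamma$ is), while $S(\Gamma,N)=\emptyset$, so both equivalences hold vacuously; hence I may assume $r\ge 1$.

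First I would isolate the rank-one case, which is the heart of the matter. Here $S(\Gamma,N)=\{[\chi],[-\chi]\}$ with $\chi$ of cyclic image; after normalizing, $\chi\colon\Gamma\twoheadrightarrow\mathbb Z$, and $K=\ker\chi$ satisfies $N\subseteq K$ with $K/N$ finite, so $N$ is finitely generated exactly when $K$ is. Fix $t\in\Gamma$ with $\chi(t)=1$. The key translation between geometry and algebra is the Bieri--Strebel description: $[\chi]\in\Sigma^1(\Gamma)$ holds if and only if $\Gamma$ is an ascending HNN extension with stable letter $t$ over some finitely generated $B\le K$ with $tBt^{-1}\le B$ and $K=\bigcup_{i\ge 0}t^{-i}Bt^{i}$. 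One direction is immediate (take $B=K$ when $K$ is finitely generated, using normality of $K$); the other is obtained by reading off such a $B$ from a connected, $\chi$-cobounded subgraph of $Cay(\Gamma;\mathcal X)_\chi$, the coboundedness bounding the amount of $t$-conjugation needed to express a generating set. Applying this to both $[\chi]$ and $[-\chi]$ yields finitely generated $B,B'\le K$ with $K=\bigcup_{i\ge 0}t^{-i}Bt^{i}=\bigcup_{i\ge 0}t^{i}B't^{-i}$; a bounded-window argument (conjugating an element of $K$ up into $B$ and down into $B'$ to reduce the $t$-exponent to zero) then shows every element of $K$ already lies in $\langle B\cup B'\rangle$, so $K$ is finitely generated. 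This settles the rank-one case.

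For arbitrary rank I would reduce to rank one. The implication ``$N$ finitely generated $\Rightarrow S(\Gamma,N)\subseteq\Sigma^1(\Gamma)$'' is the more direct half: given $\chi|_N=0$, any edge-path between vertices $g,h\in\Gamma_\chi$ can be pushed into $\Gamma_\chi$ using a finite generating set of $N$ together with a single element raising $\chi$, so $Cay(\Gamma;\mathcal X)_\chi$ is connected. The reverse implication is the hard part: assuming $S(\Gamma,N)\subseteq\Sigma^1(\Gamma)$ one must manufacture an \emph{actual} finite generating set for $N$. Here I would use that the discrete (rational) characters are dense in the compact subsphere $S(\Gamma,N)$, together with the openness of $\Sigma^1(\Gamma)$, to cover $S(\Gamma,N)$ by finitely many rank-one directions whose kernels are finitely generated by the previous paragraph; an induction on $r$, writing $N$ as an iterated extension with finitely generated abelian quotients along these directions, then produces finite generation of $N$. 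Making this covering-plus-induction yield a genuine finite generating set, rather than merely finite generation ``in each direction'', is the main obstacle, and is exactly where the content of the Bieri--Neumann--Strebel theorem lies.

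Finally, the $\Sigma^2$-statement follows the same scheme one dimension up, replacing the Cayley graph by the Cayley $2$-complex and connectivity by simple connectivity, as in Renz. The rank-one heart now reads: $[\chi],[-\chi]\in\Sigma^2(\Gamma)$ if and only if $K=\ker\chi$ is finitely presented, via an ascending HNN description in which the base group $B$ is finitely presented and every loop of $Cay(\Gamma;\langle\mathcal X|\mathcal R\rangle)_\chi$ bounds a disc of $\chi$-bounded height, so that the relations of $K$ are consequences of finitely many under $t$-conjugation; combining the two signs collapses these to a finite presentation of $K$. The passage to higher rank is identical in spirit to the $\Sigma^1$-case. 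The genuinely new obstacle relative to $\Sigma^1$ is the quantitative control of van Kampen diagrams: one must bound both the area and the $\chi$-spread of the discs filling loops inside the sublevel complex, which is precisely the refinement of the Brown/Renz criterion that makes $\Sigma^2$ detect finite presentability.
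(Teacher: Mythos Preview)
The paper does not contain a proof of this theorem at all: it is stated in the background section as a known result, with citations to \cite{BNS} and \cite{RenzThesis}, and is used thereafter as a black box. So there is no ``paper's own proof'' to compare against.

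Your sketch follows the lines of the original arguments in those references. The rank-one analysis via ascending HNN structures is the standard translation, and the passage to higher rank by openness of $\Sigma^1$ together with density of discrete characters is indeed the mechanism used by Bieri--Neumann--Strebel. You correctly flag the substantive step (turning ``finite generation in each direction'' into an actual finite generating set for $N$) and explicitly defer it to the content of the cited theorem; the same applies to your $\Sigma^2$ paragraph, where the diagram-height control is precisely Renz's contribution. As a summary of how the cited proofs go, this is accurate, but as a self-contained argument it is incomplete at exactly the points you yourself identify. Since the paper treats the statement as an imported result, a reader would simply be referred to \cite{BNS} and \cite{RenzThesis} rather than to any argument here.
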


 The homological invariants can be defined by means of the monoid ring $\mathbb{Z}\Gamma_{\chi}$. This is of course the subring of $\mathbb{Z}{\Gamma}$ containing
 exactly all elements $\sum a_{\gamma} \gamma \in \mathbb{Z}{\Gamma}$ such that $a_{\gamma} \neq 0$ only if $\gamma \in \Gamma_{\chi}$. We put 
 \[ \Sigma^m(\Gamma; \mathbb{Z}) = \{[\chi] \in S(\Gamma) \hbox{ } | \hbox{ } \mathbb{Z} \hbox{ is of type } FP_m \hbox{ over } \mathbb{Z}\Gamma_{\chi} \}.\]
 As observed by Bieri and Renz \cite{BieriRenz} if $\Sigma^m(\Gamma; \mathbb{Z}) \neq \emptyset$ then $\Gamma$ is of type $FP_m$.
 All we need about these homological invariants is that $\Sigma^2(\Gamma) \subseteq \Sigma^2(\Gamma; \mathbb{Z})$ whenever $\Gamma$ is finitely presented. Details may be found
 in \cite{BieriRenz} and \cite{RenzThesis}.
 
 Some of the general results we will need about these invariants concern direct products of groups, subgroups of finite index and retracts. 
 
\begin{theorem}[Direct product formulas, \cite{Gehrke}] \label{GehrkeThm}
Let $G_1$ and $G_2$ be finitely generated groups and let $\chi = (\chi_1, \chi_2) : G_1 \times G_2 \to \mathbb{R}$ be a non-zero character. Then 
$[\chi] \in \Sigma^1(G_1 \times G_2)$ if and only if at least one of the following conditions holds:
\begin{enumerate}
 \item $\chi_i \neq 0$ for $i=1,2$ or
 \item $[\chi_i] \in \Sigma^1(G_i)$ for some $i \in \{1,2\}$.
 \end{enumerate}
Similarly, if $G_1$ and $G_2$ are finitely presented, then $[\chi] \in \Sigma^2(G_1 \times G_2)$ if and only if at least one of the following conditions holds:
\begin{enumerate}
 \item $[\chi_1] \in \Sigma^1(G_1)$ and $\chi_2 \neq 0$; 
\item $[\chi_2] \in \Sigma^1(G_2)$ and $\chi_1 \neq 0$ or
\item $[\chi_i] \in \Sigma^2(G_i)$ for some $i \in \{1,2\}$.
 \end{enumerate}
\end{theorem}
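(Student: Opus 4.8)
The plan is to read off both invariants from the positive subcomplexes of a product Cayley complex and to analyse them one coordinate direction at a time. Fix finite generating sets $\mathcal{X}_1,\mathcal{X}_2$ and take $\mathcal{X}=\mathcal{X}_1\sqcup\mathcal{X}_2$ for $\Gamma=G_1\times G_2$; for $\Sigma^2$ fix finite presentations $\langle\mathcal{X}_i\mid\mathcal{R}_i\rangle$ and use the standard product presentation $\langle\mathcal{X}\mid\mathcal{R}_1\cup\mathcal{R}_2\cup C\rangle$, where $C$ consists of the commutator relators $[x_1,x_2]$ for $x_i\in\mathcal{X}_i$. The resulting Cayley complex then carries three families of $2$-cells: the relator cells of each factor, lying in the horizontal slices $G_1\times\{g_2\}$ and the vertical slices $\{g_1\}\times G_2$, together with the commuting squares on $(g_1,g_2),(g_1x_1,g_2),(g_1x_1,g_2x_2),(g_1,g_2x_2)$. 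Write $\chi(g_1,g_2)=\chi_1(g_1)+\chi_2(g_2)$, so that heights add, and note that since each nonzero $\chi_i$ is unbounded on the finitely generated group $G_i$ we may choose single generators $a\in\mathcal{X}_1^{\pm1}$ with $\chi_1(a)>0$ whenever $\chi_1\neq0$, and $b\in\mathcal{X}_2^{\pm1}$ with $\chi_2(b)>0$ whenever $\chi_2\neq0$.

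For $\Sigma^1$ both sufficiency conditions follow from a single \emph{climb and traverse} device: a move lowering $\chi$ in one coordinate is performed after first spending a bounded number of $a$- or $b$-steps in the other coordinate to accumulate enough height budget to stay in $\Gamma_\chi$. Thus if both $\chi_i\neq0$ I would join an arbitrary positive vertex $(g_1,g_2)$ to $(1,1)$ by climbing in the $G_2$-direction with $b$, traversing a fixed word from $g_1$ to $1$ in the $G_1$-direction (now positive by the $\chi_2$-budget), then symmetrically climbing in $G_1$, descending $g_2b^N$ back to $1$, and finally descending the power of $a$, which stays positive because the heights $\chi_1(a^j)=j\chi_1(a)$ are nonnegative. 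If instead $[\chi_1]\in\Sigma^1(G_1)$ I would climb in $G_1$, traverse $G_2$ down to the identity coordinate, and then descend inside the slice $G_1\times\{1\}$ using connectedness of $Cay(G_1;\mathcal{X}_1)_{\chi_1}$, where the height is exactly $\chi_1$. For necessity, if say $\chi_2=0$ then $\Gamma_\chi=(G_1)_{\chi_1}\times G_2$ with complete vertical slices, so the projection to the first coordinate is a bijection on $\pi_0$ and the positive subgraph is connected if and only if $Cay(G_1;\mathcal{X}_1)_{\chi_1}$ is, i.e. exactly when one of the two displayed conditions holds.

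For $\Sigma^2$ the same height budget is used to fill loops rather than merely connect vertices, with the commuting squares decoupling the two directions. Each listed condition already forces $[\chi]\in\Sigma^1(\Gamma)$ by the first part, so the positive subcomplex is connected and the real task is to fill an arbitrary positive edge-loop by positive $2$-cells. The conceptual engine is Renz's essential-connectivity criterion: membership in $\Sigma^2$ is equivalent to essential $1$-connectivity of the height filtration $\{Y^{\geq t}\}$ of the (simply connected) Cayley complex. For a product with additive height the factor filtrations combine by the controlled-connectivity (join) product formula, under which essential $p$- and $q$-connectivity yield essential $(p+q+2)$-connectivity. Hence $[\chi_1]\in\Sigma^1(G_1)$ (essential $0$-connectivity) together with $\chi_2\neq0$ (cofinal nonemptiness, essential $(-1)$-connectivity) already gives essential $1$-connectivity, which is cases $(1)$–$(2)$; and $[\chi_1]\in\Sigma^2(G_1)$ gives it a fortiori, case $(3)$, the subcase $\chi_2=0$ being a genuine product with the simply connected full complex of $G_2$. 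Unwound into the Cayley complex, this is again climb, fill at a high level using the factor data, and push the filling disk back down, the commuting squares furnishing the vertical strips that keep the intermediate disks positive.

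The main obstacle is the necessity direction for $\Sigma^2$ when $\chi_1\neq0$ and $\chi_2\neq0$ but neither $[\chi_i]$ lies in $\Sigma^1(G_i)$: here $[\chi]\in\Sigma^1(\Gamma)$ \emph{does} hold, so the positive subcomplex is connected, and one must show it nonetheless fails to be simply connected. The prototype is $F_2\times F_2$ with a diagonal character, whose positive subcomplex carries the finitely generated but not finitely presented Stallings–Bieri kernel, so this case is genuinely substantive. (The complementary failures are easy: if some $\chi_i=0$ then by presentation independence I may use the product presentation, and $\pi_1(Y_\chi)=\pi_1(Cay(G_1;\mathcal{R}_1)_{\chi_1})$, so $[\chi]\in\Sigma^2(\Gamma)$ exactly when $[\chi_1]\in\Sigma^2(G_1)$; and if additionally $[\chi_1]\notin\Sigma^1(G_1)$ then already $[\chi]\notin\Sigma^1(\Gamma)\supseteq\Sigma^2(\Gamma)$.) For the hard case I would pass to the homological invariant via $\Sigma^2(\Gamma)\subseteq\Sigma^2(\Gamma;\mathbb{Z})$ and run a Mayer–Vietoris/Künneth argument over the monoid ring $\mathbb{Z}\Gamma_\chi$, showing $\mathbb{Z}$ is not $FP_2$ over it when neither factor is $FP_1$ over its own monoid ring; the technical difficulty is that $\Gamma_\chi$ is a half-space in the product rather than a product of submonoids, so $\mathbb{Z}\Gamma_\chi$ is not a clean tensor product and the double failure of essential $0$-connectivity must be combined by hand. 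Equivalently one exhibits an explicit essential loop: the disconnection of each $Cay(G_i)_{\chi_i}$ forces separated vertices to be joined only by dipping below height $0$ and being compensated by the other coordinate, and making precise that the two independent compensations cannot be reconciled at bounded depth is the crux of the proof.
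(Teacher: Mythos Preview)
The paper does not prove this theorem at all: it is quoted as background from Gehrke's paper \cite{Gehrke} and used as a black box, so there is no ``paper's own proof'' to compare against. What follows is therefore an assessment of your sketch on its own merits.

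Your $\Sigma^1$ argument is essentially correct and standard. For $\Sigma^2$ sufficiency, invoking essential connectivity and a controlled-connectivity product principle is the right conceptual framework, though the statement ``essential $p$- and $q$-connectivity yield essential $(p+q+2)$-connectivity'' for product filtrations is itself the substantive content of Gehrke's work (or of the later Bieri--Geoghegan treatment), so at that point you are citing the theorem you are trying to prove rather than proving it. If you want a self-contained argument you would need to carry out the climb-fill-push-down procedure by hand with the commuting squares, which is doable but more work than your paragraph suggests.

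The genuine gap is the one you yourself flag: necessity for $\Sigma^2$ when both $\chi_i\neq0$ but $[\chi_i]\notin\Sigma^1(G_i)$ for $i=1,2$. Your two proposed attacks---a K\"unneth-type argument over $\mathbb{Z}\Gamma_\chi$, or constructing an explicit essential loop---are both reasonable starting points, but neither is close to a proof as written; you correctly identify that $\Gamma_\chi$ is not a product of submonoids, which is exactly why this step is delicate. Also, your treatment of the easier necessity case ($\chi_2=0$) via ``presentation independence'' and $\pi_1(Y_\chi)=\pi_1(Cay(G_1;\mathcal{R}_1)_{\chi_1})$ is a bit loose: the definition of $\Sigma^2$ asks only for \emph{some} finite presentation to work, so failure of one presentation is not enough. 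The clean route is the retract theorem (Theorem~\ref{retractsthm} in the paper): $G_1$ is a retract of $G_1\times G_2$ and $\chi$ factors through the retraction, hence $[\chi]\in\Sigma^2(\Gamma)$ forces $[\chi_1]\in\Sigma^2(G_1)$.
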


There was a conjecture suggesting how to compute the $\Sigma$-invariants of direct products in
higher dimensions, but it turned out to be false. Counterexamples were found by Meier, Meinert and VanWyk \cite{MMV2} for the homotopical 
invariants and by Schütz \cite{Schutz} in the homological case. For precise statements see \cite{BieriGeoghegan}, which also brings
a proof of the homological conjecture if coefficients are taken in a field (rather than $\mathbb{Z}$).

\begin{theorem}[Finite index subgroups, \cite{Meinert}]  \label{finiteindexthm}
Let $G$ be a finitely presented group and let $H \leqslant G$ be a subgroup of finite index. 
Let $\chi: G \to \mathbb{R}$ be a non-zero character and denote by $\chi_0$ its restriction to $H$.
Then $[\chi] \in \Sigma^2(G)$ if and only if $[\chi_0] \in \Sigma^2(H)$.
\end{theorem}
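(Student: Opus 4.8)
\emph{Proof plan.} The plan is to reduce the statement to the topological description of the invariant $\Sigma^2$ following Renz \cite{RenzThesis} and Meinert \cite{Meinert} (compare also \cite{BieriGeoghegan}). Since $G$ is finitely presented it has an Eilenberg--MacLane complex $X=K(G,1)$ with finite $2$-skeleton; write $\widetilde X$ for its universal cover, a contractible CW-complex on which $G$ acts freely and with $G$-cocompact $2$-skeleton. A non-zero character $\chi\colon G\to\mathbb R$ lifts to a continuous $\chi$-equivariant height function $h\colon\widetilde X\to\mathbb R$, i.e.\ $h(g\cdot\tilde x)=h(\tilde x)+\chi(g)$ for all $g\in G$; one builds $h$ by prescribing its values on a set of $G$-orbit representatives of the $0$-cells, extending equivariantly, and then extending over the higher cells. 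The criterion to be used reads: $[\chi]\in\Sigma^2(G)$ if and only if the superlevel filtration $\bigl(\widetilde X_{h\ge t}\bigr)_{t\in\mathbb R}$, with $\widetilde X_{h\ge t}:=h^{-1}\bigl([t,\infty)\bigr)$, is \emph{essentially $1$-connected} in the sense of \cite{RenzThesis} --- informally, after shifting the filtration down by a uniformly bounded amount the path-components of $\widetilde X_{h\ge t}$ merge and all loops become null-homotopic. Importantly, whether this holds is independent of the choices involved, so it may be tested on \emph{any} admissible pair $(X,h)$.

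\emph{The key observation.} The passage to the finite-index subgroup $H\leqslant G$ does not change this test object. Let $p\colon X_H\to X$ be the covering space associated to $H$. As $[G:H]<\infty$, the complex $X_H$ is a finite cover of $X$, hence itself a $K(H,1)$ with finite $2$-skeleton (equivalently, by Reidemeister--Schreier, a finite-index subgroup of a finitely presented group is finitely presented), and its universal cover is again $\widetilde X$, now carrying the restricted $H$-action, still free and with $H$-cocompact $2$-skeleton. Moreover the \emph{same} map $h$ is $\chi_0$-equivariant for this $H$-action, since for $\eta\in H$ one has $h(\eta\cdot\tilde x)=h(\tilde x)+\chi(\eta)=h(\tilde x)+\chi_0(\eta)$, where $\chi_0=\chi|_H$. (Note $\chi_0\ne 0$: the transfer shows that $H_1(H;\mathbb R)\to H_1(G;\mathbb R)$ is surjective, so a non-zero character of $G$ restricts to a non-zero character of $H$, and $[\chi_0]\in S(H)$ indeed makes sense.) Thus $(\widetilde X,h)$ is an admissible test object simultaneously for $\Sigma^2(G)$ and for $\Sigma^2(H)$.

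\emph{Conclusion and main obstacle.} Applying the criterion of the first paragraph to $(X,h)$ with the group $G$ gives: $[\chi]\in\Sigma^2(G)$ iff $\bigl(\widetilde X_{h\ge t}\bigr)_t$ is essentially $1$-connected; applying it to $(X_H,h)$ with the group $H$ gives: $[\chi_0]\in\Sigma^2(H)$ iff the very same filtration of the very same space is essentially $1$-connected. Hence the two conditions coincide and $[\chi]\in\Sigma^2(G)$ if and only if $[\chi_0]\in\Sigma^2(H)$. The one genuine obstacle here is the background input quoted in the first paragraph, namely the topological characterization of $\Sigma^2$ together with the independence of essential $1$-connectivity of the height filtration from the chosen finite-type $K(\cdot,1)$ --- precisely what Renz and Meinert establish; once it is granted, the finite-index statement is almost formal. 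The identical scheme, with the filtered universal cover replaced by the cellular chain complex of $\widetilde X$ filtered by $h$ (equivalently, comparing finiteness of $\mathbb Z$ over $\mathbb Z G_{\chi}$ and over $\mathbb Z H_{\chi_0}$), proves the homological analogue for $\Sigma^m(-;\mathbb Z)$ in all dimensions.
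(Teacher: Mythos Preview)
The paper does not contain a proof of this statement: Theorem~\ref{finiteindexthm} is quoted without proof as a background result from Meinert~\cite{Meinert}, so there is no in-paper argument to compare against. Your sketch is correct and is essentially the argument one finds in the cited source: pass to the topological characterization of $\Sigma^2$ via essential $1$-connectivity of the superlevel filtration on the universal cover of a $K(G,1)$ with finite $2$-skeleton, and observe that the same pair $(\widetilde X,h)$ serves as a test object for both $G$ and the finite-index subgroup $H$, since the finite cover $X_H\to X$ has the same universal cover and the restricted $H$-action is still cocompact on the $2$-skeleton. Your remark that $\chi_0\neq 0$ because $[G:H]<\infty$ is a necessary check, and the identification of the ``one genuine obstacle'' --- the independence of the criterion from the admissible choices --- is accurate; that is precisely what Meinert establishes before deducing the finite-index result.
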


\begin{theorem}[Retracts, \cite{Meinert}]  \label{retractsthm}
Let $G$ be a finitely presented group and suppose that $H$ is a retract, that is, there are homomorphisms $p: G \to H$ and $j: H \to G$ such that $p \circ j = id_H$.
Suppose that $\chi: H \to \mathbb{R}$ is a non-zero character. Then
\[ [\chi \circ p] \in \Sigma^2(G) \Rightarrow [\chi] \in \Sigma^2(H).\]
\end{theorem}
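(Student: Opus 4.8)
The plan is to deduce the statement from the homotopical, filtration-based reformulation of $\Sigma^2$, transporting the relevant fillings from $G$ down to $H$ along the maps induced by $p$ and $j$. First I would record that $H$ is finitely presented: as a retract of the finitely presented group $G$ it is of type $F_2$ (type $F_n$ is inherited by retracts), so that $[\chi] \in S(H)$ and $\Sigma^2(H)$ both make sense. Writing $\psi = \chi \circ p \in S(G)$, the hypothesis is $[\psi] \in \Sigma^2(G)$, and the identity $\psi = \chi \circ p$ supplies the two facts that drive everything: on vertices $p$ preserves $\chi$-heights, since $\chi(p(g)) = \psi(g)$, and moreover $p^{-1}(H_\chi) = G_\psi$.

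Next I would set up the geometric models. Let $Y$ and $X$ be the universal covers of finite presentation $2$-complexes of $H$ and $G$, equipped with $\chi$- and $\psi$-equivariant height functions $h_Y$ and $h_X$ (so $h_Y(h \cdot \eta) = h_Y(\eta) + \chi(h)$, and similarly for $X$). Recall from the work of Renz and Bieri--Geoghegan (see \cite{RenzThesis}, \cite{BieriGeoghegan}) that $[\chi] \in \Sigma^2(H)$ is equivalent to the system of superlevel sets $Y_{\ge t} = h_Y^{-1}[t,\infty)$ being essentially $1$-connected: for every $t$ there is $s \le t$ such that $Y_{\ge t} \hookrightarrow Y_{\ge s}$ is $\pi_0$- and $\pi_1$-trivial, with the analogous statement for $X$ and $\psi$. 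The homomorphisms $p$ and $j$ induce cellular maps $p_* \colon X \to Y$ and $j_* \colon Y \to X$, equivariant with respect to $p \colon G \to H$ and $j \colon H \to G$. Because each generator maps to a word of bounded length and $\chi$ is Lipschitz in word length, these maps distort height by at most a uniform constant $C$, that is $|h_Y(p_*\xi) - h_X(\xi)| \le C$ and $|h_X(j_*\eta) - h_Y(\eta)| \le C$. Finally, $p \circ j = \mathrm{id}_H$ forces $p_* \circ j_*$ to be $H$-equivariantly homotopic to $\mathrm{id}_Y$, and I would arrange this homotopy to move points within bounded height as well.

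With these pieces in place the transport argument is routine. Given a loop $\gamma$ in $Y_{\ge t}$, its image $j_*\gamma$ lies in $X_{\ge t - C}$; using $[\psi] \in \Sigma^2(G)$ there is $s' \le t - C$ and a disk in $X_{\ge s'}$ bounding $j_*\gamma$; applying $p_*$ yields a disk in $Y_{\ge s' - C}$ bounding $p_*j_*\gamma$. Since $p_*j_* \simeq \mathrm{id}_Y$ through a bounded-height homotopy, $\gamma$ is homotopic to $p_*j_*\gamma$ inside $Y_{\ge t - C'}$, and splicing the two fillings shows $\gamma$ bounds in $Y_{\ge s}$ for some $s \le t$ depending only on $t$. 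The same scheme applied to $0$-cells yields the $\pi_0$-triviality. Hence $\{Y_{\ge t}\}$ is essentially $1$-connected and $[\chi] \in \Sigma^2(H)$, as desired.

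The delicate point, and the one I would spend most care on, is the height bookkeeping: verifying that $p_*$, $j_*$, and especially the homotopy realizing $p_*j_* \simeq \mathrm{id}_Y$ all displace height by a uniformly bounded amount. This control is exactly what keeps the final filling level $s$ tied to $t$ rather than drifting to $-\infty$, and it is the reason the argument must be run in the coarse, essential-connectivity formulation rather than with a single fixed $\chi$-subcomplex: the word-image under $p$ of a generator of $G$ can momentarily dip below height $0$, so no naive fixed presentation of $H$ is respected on the nose.
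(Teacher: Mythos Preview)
The paper does not supply a proof of this theorem; it is quoted without proof from Meinert's paper \cite{Meinert} as part of the background material in Section~\ref{sigmabackground}. So there is no ``paper's own proof'' against which to compare your attempt.

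That said, your proposal is a sound outline of the standard argument. The essential-connectivity reformulation of $\Sigma^2$ is exactly the right setting, and the three ingredients you isolate --- bounded height distortion of $p_*$ and $j_*$, the filling in $X$ coming from $[\psi]\in\Sigma^2(G)$, and the bounded-height homotopy $p_*j_*\simeq\mathrm{id}_Y$ --- are precisely what is needed. Your closing remark about why one must work with the filtration rather than a single subcomplex is also well taken. If you want to tighten the write-up, the one place that deserves an explicit sentence is the construction of the homotopy $p_*j_*\simeq\mathrm{id}_Y$ with bounded height displacement: this follows because $p_*j_*$ and $\mathrm{id}_Y$ agree on the $H$-orbit of the basepoint and $Y$ is simply connected with cocompact $H$-action, so the homotopy can be built equivariantly cell by cell over a fundamental domain, which forces the bound; but it is worth saying so rather than leaving it implicit.
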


\begin{theorem}[Theorem C, \cite{Kochloukova}]  \label{ThmKochloukova}
Suppose that $G$ is a group of type $FP_m$ (resp. $F_m$) and $N$ is a normal subgroup of $G$ that is locally nilpotent-by-finite. Then
\[ \{[\chi] \in S(G) \hbox{ } | \hbox{ } \chi(N) \neq 0 \} \subseteq \Sigma^m(G;\mathbb{Z}) \hbox{ (resp. } \Sigma^m(G)).\] 
\end{theorem}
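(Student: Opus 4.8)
The plan is to prove the stronger statement that the Novikov homology $H_i(G;\widehat{\mathbb{Z}G}_\chi)$ vanishes in \emph{every} degree $i$, where $\widehat{\mathbb{Z}G}_\chi$ denotes the Novikov ring of formal sums $\sum a_g g$ whose support meets each set $\{g : \chi(g)\le c\}$ in finitely many points (so that $\mathbb{Z}G_\chi\subseteq\widehat{\mathbb{Z}G}_\chi$). By the standard homological characterisation of the invariants (Sikorav's theorem, in the form recorded by Bieri--Renz \cite{BieriRenz}), for a group of type $FP_m$ one has $[\chi]\in\Sigma^m(G;\mathbb{Z})$ precisely when $H_i(G;\widehat{\mathbb{Z}G}_\chi)=0$ for all $i\le m$; hence the all-degree vanishing yields the inclusion for every $m$ for which $G$ is $FP_m$. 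I would first reduce everything to a single vanishing statement about $N$. Setting $A:=\widehat{\mathbb{Z}G}_\chi$ and feeding the extension $1\to N\to G\to G/N\to 1$ into the Lyndon--Hochschild--Serre spectral sequence
\[
E^2_{pq}=H_p\bigl(G/N;\,H_q(N;A)\bigr)\ \Longrightarrow\ H_{p+q}(G;A),
\]
it suffices to show $H_q(N;A)=0$ for all $q\ge 0$, for then $E^2\equiv 0$ and $H_*(G;A)=0$. The role of the hypothesis $\chi(N)\neq 0$ is that we may fix $t\in N$ with $\chi(t)>0$, and then $1-t$ is a \emph{unit} of $A$ with inverse $\sum_{n\ge 0}t^n$ (which converges in $\widehat{\mathbb{Z}G}_\chi$ since $\chi(t^n)=n\chi(t)\to+\infty$). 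As $A$ is free of rank one over itself, left multiplication by $1-t$ is bijective; in particular $H_*(\langle t\rangle;A)=0$.

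The technical heart, and the step I expect to be the main obstacle, is upgrading this cyclic vanishing to $H_*(N;A)=0$ using only that $N$ is locally nilpotent-by-finite. The difficulty is that $\langle t\rangle$ need not be normal in $N$, so the cyclic computation cannot be inserted directly into an LHS argument; the nilpotent structure is exactly what supplies the missing normality. First, writing $N$ as the directed union of its finitely generated subgroups, discarding those not containing $t$, and using that group homology commutes with filtered colimits, I reduce to $N$ finitely generated, hence nilpotent-by-finite. Choosing a normal finite-index nilpotent subgroup $N_0\trianglelefteq N$ and applying LHS to $N_0\trianglelefteq N$ (the character $\chi|_{N_0}$ is again nonzero, since $g^{[N:N_0]}\in N_0$ for a suitable $g$), I reduce further to $N$ finitely generated nilpotent. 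There I would induct on the nilpotency class, but phrase the inductive claim module-theoretically so that the coefficients propagate correctly: \emph{for a finitely generated nilpotent group $P$, a nonzero $\psi\in\mathrm{Hom}(P,\mathbb{R})$, and any $\mathbb{Z}P$-module $B$ on which $1-u$ acts invertibly for some $u\in P$ with $\psi(u)>0$, one has $H_*(P;B)=0$}. The abelian base case uses the normal cyclic subgroup $\langle u\rangle$ together with the invertibility of $1-u$; for the inductive step one applies LHS to $Z(P)\trianglelefteq P$, treating separately whether $\psi$ is nonzero on $Z(P)$ (use a central $u'$, whose $\langle u'\rangle$ is normal) or vanishes there (pass to $P/Z(P)$ of smaller class). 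The key point making the latter case close is that $1-u$, being a unit commuting with the central action, induces an invertible operator on $H_q(Z(P);A)$, so the induced character $\bar\psi$ and image $\bar u$ satisfy the hypotheses on $P/Z(P)$.

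Assembling these pieces gives $H_*(N;A)=0$, hence $H_*(G;A)=0$, hence $[\chi]\in\Sigma^m(G;\mathbb{Z})$ for every admissible $m$. The homotopical statement runs in parallel: one replaces the homological criterion by Renz's homotopical (Brown-type) criterion \cite{RenzThesis} and uses, additionally, that the $\pi_1$-level obstruction is controlled by the same invertibility of $1-t$ that governs the $\Sigma^1$-geometry. Conceptually, the whole argument expresses that $\mathbb{Z}G$ is obtained from the monoid ring $\mathbb{Z}G_\chi$ by inverting the conjugation-invariant, $\chi$-positive, locally nilpotent-by-finite data carried by $N$ --- a process preserving $FP_m$-finiteness --- and it is precisely the conjugation-invariance of $\chi$ (so that every conjugate of $t$ remains $\chi$-positive) that lets the normal subgroup $N$ do all the work.
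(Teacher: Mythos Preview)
The paper does not give its own proof of this theorem: it is quoted from \cite{Kochloukova} (with the remark that the published statement has ``locally polycyclic-by-finite'' but the proof works for ``locally nilpotent-by-finite''), and the paper only ever applies it with $N$ abelian. So there is no in-paper argument to compare against; I can only assess your proposal on its own terms.

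Your homological strategy is the standard and correct one: Sikorav/Bieri--Renz reduces the question to vanishing of $H_*(G;\widehat{\mathbb{Z}G}_\chi)$, LHS for $N\trianglelefteq G$ reduces that to $H_*(N;\widehat{\mathbb{Z}G}_\chi)=0$, the colimit and finite-index reductions are fine, and the induction on nilpotency class via the centre is the right mechanism. There is, however, a genuine mismatch between the inductive claim you formulate and the way you use it. You assume only that $1-u$ is invertible on $B$ for \emph{one} element $u$ with $\psi(u)>0$. In your Case~1 ($\psi|_{Z(P)}\neq 0$) you then want to replace $u$ by a central $u'$, but nothing in your hypothesis guarantees that $1-u'$ acts invertibly on $B$. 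Worse, after one pass through Case~2 you are working with the module $H_q(Z(P);B)$, not with $A$ itself, so you cannot silently appeal to the special Novikov-ring fact that $1-u'$ is invertible for every $\chi$-positive $u'$. The fix is easy: strengthen the inductive hypothesis to ``$1-u$ is invertible on $B$ for \emph{every} $u\in P$ with $\psi(u)>0$''. This holds for $B=A$, and it propagates through Case~2 because any lift $v\in P$ of $\bar v\in P/Z(P)$ with $\bar\psi(\bar v)>0$ commutes with $Z(P)$ and hence induces an invertible $\mathbb{Z}Z(P)$-endomorphism of $B$, so $1-\bar v$ is invertible on $H_q(Z(P);B)$.

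The homotopical half of your proposal is not a proof. Saying ``runs in parallel'' and invoking a ``Brown-type criterion'' does not explain how the $\pi_1$-obstruction (which has no spectral-sequence analogue) is handled; this is precisely where Kochloukova's original argument does nontrivial work. If you want a self-contained homotopical proof you should either spell out the $\Sigma^2$-criterion you are using and verify it directly, or reduce to the homological statement via $\Sigma^m(G)=\Sigma^2(G)\cap\Sigma^m(G;\mathbb{Z})$ for $m\ge 2$ and treat $\Sigma^2$ separately.
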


As pointed out to me by D. Kochloukova, in \cite{Kochloukova} the result is stated for $N$ locally polycyclic-by-finite, but actually the proof works for nilpotent-by-finite. We will use it
with $N$ being abelian. The case $m=1$, with $N$ abelian, can also be found as Lemma C1.20 in Strebel's notes \cite{Strebel}. 
\end{section}

\begin{section}{The \texorpdfstring{$\Sigma^1$}{Sigma1}-invariant of wreath products}  \label{sw}
Let $\Gamma = H \wr_X G$ be a finitely generated wreath product. As shown by Cornulier \cite{Cornulier}, both $G$ and $H$ are finitely generated and $G$ acts on $X$ with 
finitely many orbits. Denote $M = \oplus_{x \in X} H_x \subseteq \Gamma$.
We start working with the characters $\chi: \Gamma \to \mathbb{R}$ such that $\chi |_M = 0$, for which there are some partial results by Bartholdi, 
Cornulier and Kochloukova. We quote their result in its most general form, which deals with the higher homological invariants.

    \begin{theorem}[\cite{BCK}, Theorem 8.1] \label{BCKThm}
Let $\Gamma = H \wr_X G$ be a wreath product of type $FP_m$ and let $M = \oplus_{x \in X}H_x \subseteq \Gamma$. Let $\chi: \Gamma \to \mathbb{R}$ be a non-zero
character such that $\chi |_M = 0$. The following conditions are sufficient for $[\chi] \in \Sigma^m(\Gamma; \mathbb{Z})$:

\begin{enumerate}
 \item $[\chi |_G] \in \Sigma^m(G; \mathbb{Z})$;
 \item $[\chi |_{G_{\alpha}}] \in \Sigma^{m-i}(G_{\alpha}; \mathbb{Z})$ for all stabilizers $G_{\alpha}$ of the diagonal action of $G$ on $X^i$
        and for all $1 \leq i \leq m$.
\end{enumerate}
Moreover, if the abelianization of $H$ is infinite, then such conditions are also necessary.
    \end{theorem}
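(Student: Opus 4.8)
The plan is to analyse the monoid ring $\mathbb{Z}\Gamma_\chi$ directly, exploiting the fact that $\chi|_M = 0$ forces a transparent structure. Since $M$ is normal in $\Gamma$ and $\chi$ vanishes on it, $\chi$ descends to a character $\bar\chi$ of $G = \Gamma/M$, and the submonoid $\Gamma_\chi$ splits as a semidirect product $M \rtimes G_{\bar\chi}$, where $G_{\bar\chi} = \{g \in G : \bar\chi(g) \geq 0\}$; correspondingly $\mathbb{Z}\Gamma_\chi$ is a skew monoid ring of $\mathbb{Z}M$ over $\mathbb{Z}G_{\bar\chi}$. The statement is about whether the trivial module $\mathbb{Z}$ is of type $FP_m$ over $\mathbb{Z}\Gamma_\chi$, so I would first set up a Lyndon--Hochschild--Serre type spectral sequence attached to the extension $M \to \Gamma_\chi \to G_{\bar\chi}$, whose second page reads
\[ E^2_{p,q} = \mathrm{Tor}^{\mathbb{Z}G_{\bar\chi}}_p\bigl(\mathbb{Z}, H_q(M;\mathbb{Z})\bigr) \Longrightarrow \mathrm{Tor}^{\mathbb{Z}\Gamma_\chi}_{p+q}(\mathbb{Z}, \mathbb{Z}), \]
the $G_{\bar\chi}$-action on $H_q(M;\mathbb{Z})$ being induced by conjugation. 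The $FP_m$ property over $\mathbb{Z}\Gamma_\chi$ is then governed by finiteness of the entries with $p + q \leq m$.

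The decisive step is to compute $H_*(M;\mathbb{Z})$ as a $\mathbb{Z}G$-module. Because $M = \bigoplus_{x \in X} H_x$ is a restricted direct sum, the Künneth theorem, passed to the colimit over finite subsets of $X$, decomposes $H_q(M;\mathbb{Z})$ according to the finite \emph{support} $S \subseteq X$ of a class, i.e.\ the set of coordinates carrying positive-degree homology. A support of size $i$ first contributes in total degree $q = i$ (each coordinate must carry degree at least one), and the $G$-action on supports is governed by the diagonal $G$-action on $X^i$. Grouping by $G$-orbits gives, for each $i$ with $1 \leq i \leq q$, a summand that is a direct sum of modules induced from the stabilizers $G_\alpha$, $\alpha \in X^i$; this is precisely where the diagonal stabilizers of the statement enter. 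By Shapiro's lemma, the $FP_k$-property over $\mathbb{Z}G_{\bar\chi}$ of such an induced summand is equivalent to the $FP_k$-property of the corresponding $\mathbb{Z}(G_\alpha)_{\chi|_{G_\alpha}}$-module, that is, to $[\chi|_{G_\alpha}] \in \Sigma^k(G_\alpha;\mathbb{Z})$.

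For \emph{sufficiency} I would combine these ingredients inside the spectral sequence. The bottom row $q = 0$ is $\mathrm{Tor}^{\mathbb{Z}G_{\bar\chi}}_p(\mathbb{Z}, \mathbb{Z})$, controlled by condition (1), namely $[\bar\chi] \in \Sigma^m(G;\mathbb{Z})$. For $q \geq 1$ the task reduces to showing that each $H_q(M;\mathbb{Z})$ is of type $FP_{m-q}$ over $\mathbb{Z}G_{\bar\chi}$, since entries with $p + q \leq m$ require $\mathrm{Tor}^{\mathbb{Z}G_{\bar\chi}}_p$ only up to $p = m - q$. The size-$i$ summand ($i \leq q$) is $FP_{m-q}$ over $\mathbb{Z}G_{\bar\chi}$ as soon as $[\chi|_{G_\alpha}] \in \Sigma^{m-q}(G_\alpha;\mathbb{Z})$; and since $i \leq q$ gives $m - i \geq m - q$, condition (2) --- which supplies $[\chi|_{G_\alpha}] \in \Sigma^{m-i}(G_\alpha;\mathbb{Z})$ --- is exactly the strongest hypothesis needed, binding at the first appearance $q = i$. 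A standard finiteness argument (all contributing $E^2_{p,q}$ finitely generated in the relevant range, together with $G$ and $H$ being of type $FP_m$) then yields $FP_m$ of $\mathbb{Z}$ over $\mathbb{Z}\Gamma_\chi$, i.e.\ $[\chi] \in \Sigma^m(\Gamma;\mathbb{Z})$.

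For \emph{necessity} under the extra hypothesis that the abelianization of $H$ is infinite, I would run the argument backwards, and this is the step I expect to be hardest. The role of the hypothesis is non-degeneracy: $H_1(H;\mathbb{Z}) = H^{\mathrm{ab}}$ is infinite, so each size-$i$ summand of $H_q(M;\mathbb{Z})$ is genuinely non-trivial and large enough that the relevant $E^2$-entries cannot be annihilated for free by the differentials. The difficulty is precisely to control those differentials and the extension problems so that the failure of condition (1) or (2) is visible in the abutment. I would isolate the position of lowest total degree at which a hypothesis fails and argue that the corresponding $E^2_{p,q}$ survives to $E^\infty$, forcing $\mathrm{Tor}^{\mathbb{Z}\Gamma_\chi}_{p+q}(\mathbb{Z},\mathbb{Z})$ outside the finitely generated range and hence $[\chi] \notin \Sigma^{p+q}(\Gamma;\mathbb{Z})$. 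The infiniteness of $H^{\mathrm{ab}}$ is exactly what guarantees this survival; for finite abelianization the same classes may be killed, which is why necessity can fail in general.
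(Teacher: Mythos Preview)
The paper does not prove this theorem: it is quoted verbatim from \cite{BCK} (Theorem 8.1 there) and used as a black box to extract Proposition~\ref{pro1}. There is therefore no proof in the present paper to compare your proposal against.

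For what it is worth, your outline is broadly in the spirit of the argument in \cite{BCK}. The ingredients there are indeed a Lyndon--Hochschild--Serre type spectral sequence adapted to the monoid ring $\mathbb{Z}\Gamma_\chi$, a K\"unneth decomposition of $H_\ast(M;\mathbb{Z})$ according to finite supports in $X$, and Shapiro's lemma to reduce to the stabilisers $G_\alpha$. Your sketch of sufficiency captures the structure correctly. For necessity your outline is honest about its own vagueness: controlling the differentials so that a failing hypothesis is visible in the abutment is genuinely the delicate point, and your sketch does not say how to do it beyond locating the lowest offending total degree. In \cite{BCK} this is handled by a more careful analysis of the filtration, and the infiniteness of $H^{\mathrm{ab}}$ enters exactly as you guess, to guarantee that the relevant induced summands are nonzero. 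If you want to turn your sketch into a proof you would need to make that step rigorous; as written it is a plausible plan rather than an argument.
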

 
Notice that item $2$ contains a statement about invariants in dimension $0$. For any finitely generated group $V$ and $\chi: V \to \mathbb{R}$,
the condition $[\chi] \in \Sigma^0(V;\mathbb{Z})$ amounts to saying that $\chi$ is a non-zero homomorphism.

Recall that the homological and homotopical invariants coincide in dimension $1$, that is, $\Sigma^1(V;\mathbb{Z}) = \Sigma^1(V)$ 
whenever $V$ is a finitely generated group (see Corollary C1.5, \cite{Strebel}, for instance). It is worth mentioning that if we consider the original
definitions of the invariants in \cite{BNS} and \cite{BieriRenz}, we get that actually the sets $\Sigma^1(V)$
and $\Sigma^1(V; \mathbb{Z})$ are antipodal in $S(V)$, that is, $\Sigma^1(V;\mathbb{Z}) = -\Sigma^1(V)$. This happens because
in \cite{BieriRenz} the authors chose to work with left group actions, while in \cite{BNS} the actions are on the right.
The sign disappears if the choice is consistent.

We can now extract from Theorem \ref{BCKThm} a set of sufficient
conditions for $[\chi] \in \Sigma^1(\Gamma)$. Namely:
 
     \begin{prop}  \label{pro1}
Let $\Gamma = H \wr_X G$ be a finitely generated wreath product and let $\chi: \Gamma \to \mathbb{R}$ be a non-zero character such that $\chi |_M = 0$. 
If $[\chi |_G] \in \Sigma^1(G)$ and if $\chi |_{G_x} \neq 0$ for all stabilizers $G_x$ of the action of $G$ on $X$, then $[\chi] \in \Sigma^1(\Gamma)$.
     \end{prop}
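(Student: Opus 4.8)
The plan is to deduce Proposition \ref{pro1} as the one-dimensional case of Theorem \ref{BCKThm}. Since $\Gamma$ is finitely generated it is of type $FP_1$, so Theorem \ref{BCKThm} applies with $m=1$; and recall (Cornulier, quoted at the start of the section) that $G$ is then finitely generated, so all the $\Sigma^1$'s below are defined. The strategy is to verify that the two hypotheses of Proposition \ref{pro1} are exactly conditions $(1)$ and $(2)$ of that theorem specialized to $m=1$, and then to pass from the homological to the homotopical invariant in dimension one.

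For condition $(1)$: with $m=1$ it reads $[\chi|_G]\in\Sigma^1(G;\mathbb{Z})$. Using the identification $\Sigma^1(V)=\Sigma^1(V;\mathbb{Z})$ for finitely generated $V$ (valid once left actions are used consistently, so that the antipodal sign does not intervene), this is the same as the first hypothesis $[\chi|_G]\in\Sigma^1(G)$. For condition $(2)$: with $m=1$ the index $i$ ranges only over $i=1$, so $X^i=X$ with its given $G$-action and the relevant stabilizers are precisely the point stabilizers $G_x$, $x\in X$; the required condition is $[\chi|_{G_x}]\in\Sigma^{0}(G_x;\mathbb{Z})$ for every $x$, which, as recalled immediately after the statement of Thebf{} Theorem \ref{BCKThm}, simply says that $\chi|_{G_x}$ is a non-zero homomorphism — exactly the second hypothesis. (Note that $\Sigma^0$ makes sense here without any finiteness assumption on $G_x$.) Thus Theorem \ref{BCKThm} gives $[\chi]\in\Sigma^1(\Gamma;\mathbb{Z})$, and applying the equality $\Sigma^1(\Gamma;\mathbb{Z})=\Sigma^1(\Gamma)$ once more yields the claim.

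I expect there to be essentially no hard obstacle: the statement is a direct specialization of the quoted theorem, and the only delicate point is the bookkeeping around the homological/homotopical coincidence in dimension one and the antipodal-sign convention between \cite{BNS} and \cite{BieriRenz}. If one preferred a self-contained argument, one could instead work directly with $Cay(\Gamma;\mathcal{X})_\chi$, connecting a vertex of $M$ to its $G$-translates by using that each $G_x$ contains an element on which $\chi$ is positive together with connectedness of $Cay(G;\cdot)_{\chi|_G}$; but invoking Theorem \ref{BCKThm} is considerably shorter and is the route I would take.
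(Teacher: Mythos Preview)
Your proposal is correct and is exactly the argument the paper uses: the paper simply states that Proposition~\ref{pro1} is ``extracted'' from Theorem~\ref{BCKThm} with $m=1$, invoking the identification $\Sigma^1(V)=\Sigma^1(V;\mathbb{Z})$ for finitely generated $V$, precisely as you do. (Aside: your text contains a stray ``Thebf\{\}''---delete it before compiling.)
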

     
\begin{rem}
This conditions could also be obtained by considering an action of $\Gamma$ on a sufficiently nice complex. We shall
apply this reasoning in the study of the invariant $\Sigma^2(H \wr_X G)$.
\end{rem}

This set of conditions is in fact necessary. First, if $\chi: \Gamma \to \mathbb{R}$ and $M \subseteq ker(\chi)$, then
\[ [\chi] \in \Sigma^1(\Gamma) \Rightarrow   [\chi |_G] \in \Sigma^1(G),\]
since $\chi |_G$ coincides with the character $\bar{\chi}$ induced on the quotient $ \Gamma / M \simeq G$ (see \cite{Strebel} Proposition A4.5).

It suffices then to analyze the restriction of $\chi$ to the stabilizer subgroups under the hypothesis that $[\chi] \in \Sigma^1(\Gamma)$.

    \begin{prop}  \label{pro2}
If $[\chi] \in \Sigma^1(\Gamma)$ and $\chi |_M = 0$, then $\chi |_{G_x} \neq 0$ for all $x \in X$.
     \end{prop}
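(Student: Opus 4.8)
The plan is to argue by contradiction: suppose $[\chi] \in \Sigma^1(\Gamma)$ and yet $\chi|_{G_{x_0}} = 0$ for some $x_0 \in X$. The strategy is to exhibit a concrete finite generating set of $\Gamma$ for which the subgraph $Cay(\Gamma;\mathcal{X})_\chi$ fails to be connected, contradicting membership in $\Sigma^1$. A convenient generating set is $\mathcal{X} = \mathcal{X}_H \cup \mathcal{X}_G$, where $\mathcal{X}_H$ is a finite generating set of the distinguished copy $H_{x_0} \cong H$ (together with finitely many conjugates to generate $M$ modulo $G$, but in fact $\mathcal{X}_H$ sitting in $H_{x_0}$ plus $\mathcal{X}_G$ suffices, since $G$ acts with finitely many orbits and the $G$-translates of $H_{x_0}$ generate $M$) and $\mathcal{X}_G$ is a finite generating set of (a lift of) $G$. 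Since $\chi|_M = 0$, every generator coming from $M$ has $\chi$-value zero.

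First I would set up the combinatorial picture. Elements of $\Gamma$ are pairs $(m,g)$ with $m \in M$, $g \in G$, and $\chi(m,g) = \chi|_G(g)$ because $\chi|_M=0$. Multiplying on the right by an $H_{x_0}$-generator $h$ changes the coordinate of $m$ at position $g^{-1}\cdot x_0$ and leaves $g$ fixed; multiplying by a $G$-generator $s$ leaves $m$ fixed and replaces $g$ by $gs$. The key observation is: if I want to move, via a $\chi$-nonnegative path, from the identity to an element of $\Gamma$ whose $G$-component lies in $G_{x_0}$ and that differs from the identity only in the $H_{x_0}$-coordinate — say the element $(h\cdot e_{x_0}, 1)$ where the $M$-part is supported at $x_0$ — then the path is confined, as far as the $G$-component is concerned, to the monoid $(G)_{\chi|_G}$. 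To edit the coordinate at position $x_0$ using a generator from the copy $H_y$ (a $G$-conjugate of $\mathcal{X}_H$), I need $g^{-1}\cdot y = x_0$, i.e.\ $g \in$ the coset taking $y$ to $x_0$; to return the $G$-component to $1$ I must traverse $G$-edges, and the total $\chi$-variation along the $G$-component must stay $\geq 0$ throughout while starting and ending at $0$.

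The crux is then a separation argument inside $G$ combined with the fact that $G_{x_0} \subseteq \ker\chi$. Concretely, I would pick $h \in H$ with $[h,H] \neq \ldots$ — rather, simply pick any nontrivial $h$ — and consider the two vertices $v_1 = (1,1)$ and $v_2 = (h\cdot e_{x_0}, 1)$, both with $\chi = 0$, hence both in $\Gamma_\chi$. Any edge path between them in $Cay(\Gamma;\mathcal{X})_\chi$ projects to a closed $\chi$-nonnegative path in $G$ based at $1$ (in the Cayley graph of $G$ for $\mathcal{X}_G$), and must at some point use an $H$-type edge at a base vertex $g$ with $g^{-1}\cdot y = x_0$ for the relevant $y$; since $\chi|_{G_{x_0}} = 0$ and $\chi(g)=\chi(g')$ whenever $g,g'$ lie in the same coset of $G_{x_0}$, a careful bookkeeping shows that the $H_{x_0}$-coordinate can only be altered while the $G$-component sits in the single fibre $\chi^{-1}(0)\cap(\text{that coset})$. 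The obstruction I would then extract: to change the $x_0$-coordinate one must pass through vertices $g$ with $\chi(g) = 0$, but the first such $H$-edge available along a path starting at $1$ with $\chi$ only allowed to increase forces, by minimality, $g=1$, and then one shows — this is the delicate point — that one cannot simultaneously reach a \emph{non-identity} value in the $x_0$-coordinate and come back, because the finitely many $H$-generators based near $\chi=0$ span only a proper submonoid. I expect this last step, making precise exactly which $H_{x_0}$-coordinate values are reachable by $\chi$-nonnegative paths and showing it is not all of $H$, to be the main obstacle; the cleanest route is probably to invoke Theorem~\ref{thmfund} in the contrapositive — exhibit a subgroup $N$ with $[\Gamma,\Gamma]\subseteq N$, $\chi|_N = 0$, and $N$ not finitely generated, namely $N$ built so that its "$x_0$-slice" is an infinitely generated subgroup of $M$ forced by the failure $\chi|_{G_{x_0}}=0$ — thereby sidestepping the explicit path combinatorics entirely.
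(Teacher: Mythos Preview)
Your setup is reasonable, but the argument collapses at the choice of witness vertices. The two vertices you propose, $v_1=(1,1)$ and $v_2=(h\cdot e_{x_0},1)$ with $h\in H_{x_0}$ a generator, are \emph{adjacent} in the Cayley graph (by the single edge labelled $h$), and both have $\chi$-value $0$; hence they are trivially connected in $Cay(\Gamma;\mathcal X)_\chi$ regardless of whether $\chi|_{G_{x_0}}=0$. So no contradiction can be extracted from that pair, and the ``delicate point'' you flag is not merely delicate but impossible as stated. The fallback via Theorem~\ref{thmfund} does not work either: exhibiting a non-finitely-generated $N\supseteq[\Gamma,\Gamma]$ with $\chi|_N=0$ only tells you that \emph{some} character vanishing on $N$ lies outside $\Sigma^1(\Gamma)$, not that your particular $[\chi]$ does (and for non-discrete $\chi$ the set of such characters can be large). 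Also, a small slip: $\mathcal X_H\subseteq H_{x_0}$ together with $\mathcal X_G$ generates $\Gamma$ only when the action is transitive; with several orbits the $G$-translates of $H_{x_0}$ miss the other orbit summands.

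The paper's argument supplies exactly the missing idea. After reducing to a single orbit $X=G\cdot x_1$, one takes any $m\in M$, uses connectedness of $Cay(\Gamma)_\chi$ to write $m$ as an alternating word in $Y^{\pm1}$ and $Z^{\pm1}$ with all partial $\chi$-values nonnegative, and rearranges to obtain $m\in\langle {}^{G_\chi}Y\rangle\subseteq\bigoplus_{x\in G_\chi\cdot x_1}H_x$. This forces $G_\chi\cdot x_1=X$. Then for any $g_1$ with $\chi(g_1)<0$ one finds $g_0\in G_\chi$ with $g_0\cdot x_1=g_1\cdot x_1$, whence $g_1^{-1}g_0\in G_{x_1}$ has strictly positive $\chi$-value. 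In contrapositive form this is precisely a disconnection argument, but with the correct witness: the unreachable vertex is ${}^{g_1}h\in H_{g_1\cdot x_1}$ for some $g_1$ with $\chi(g_1)<0$, not an element of $H_{x_1}$ itself. Your analysis that editing a given coordinate forces the $G$-component into a fixed $G_{x_0}$-coset (on which $\chi$ is constant) is exactly the mechanism, but it must be applied to a coordinate indexed by a point $g_1\cdot x_0$ with $\chi(g_1)<0$.
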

 
     \begin{proof}
Let $X = G \cdot x_1 \sqcup \ldots \sqcup G \cdot x_n$. We only need to show that $\chi |_{G_{x_i}} \neq 0$ for all $i$. By taking the quotient by
$M' = \bigoplus_{x \in X \smallsetminus G \cdot x_i} H_x$, we may assume that $n=1$,
 that is, we consider wreath products of the form $\Gamma = H \wr_X G$ with $X = G \cdot x_1$.
 
Let $Y$ and $Z$ be finite generating sets for $H$ and $G$, respectively. Since $X = G \cdot x_1$ it is clear that $Y \cup Z$ is a finite generating set 
for $\Gamma$ (we see $Y$ as a subset of the copy $H_{x_1}$). Then $Cay(\Gamma; Y \cup Z)_{\chi}$ must be connected, since $[\chi] \in \Sigma^1(\Gamma)$ by hypothesis. 

First, we show that $M$ can be generated by the left conjugates of elements of $Y^{ \pm 1}$ by elements of $G_{\chi}$. Indeed if $m \in M$ there is a path 
in $Cay(\Gamma; Y \cup Z)_{\chi}$ connecting $1$ to $m$, since $m \in M \subseteq ker(\chi) \subseteq \Gamma_{\chi}$. Such a path
has as label a word with letters in $Y^{\pm 1} \cup Z^{\pm 1}$, so we can write:
 \[ m = w_1 v_1 w_2 v_2 \cdots w_k v_k,\]
 where each $w_j$ is a word in $Y^{\pm 1}$ and each $v_j$ is a word in $Z^{\pm 1}$ (possibly trivial). We rewrite:
 \[ m = w_1 (^{v_1}w_2) (^{v_1 v_2}w_3) \cdots  (^{v_1 \cdots  v_{k-1}}w_k) (v_1 \cdots  v_k).\]
Now, $w_1 (^{v_1}w_2) (^{v_1 v_2}w_3) \cdots  (^{v_1 \cdots  v_{k-1}}w_k) \in M$ and $v_1  \cdots  v_k \in G$. But $m \in M$ and $\Gamma = M \rtimes G$, 
so $v_1  \cdots  v_k = 1_G$. Moreover, since $\chi |_M = 0$, it is clear that $\chi(v_1  \cdots  v_j) \geq 0$ for all $1 \leq j \leq k$, so:
  \[ m = w_1 (^{v_1}w_2) (^{v_1 v_2}w_3)  \cdots  (^{v_1  \cdots  v_{k-1}}w_k) \in \langle ^{G_{\chi}}(Y^{\pm 1}) \rangle,\]
  as we wanted.
  
  But then 
\[ M= \langle ^{G_{\chi}}(Y) \rangle \subseteq \langle ^{G_{\chi}}(H_{x_1}) \rangle = \bigoplus_{x \in G_{\chi} \cdot x_1} H_{x_1},\]
that is, $X = G_{\chi} \cdot x_1$. Finally, as $\chi |_G \neq 0$ there is some $g_1 \in G$ 
 such that $\chi(g_1) < 0$. On the other hand, there must be some $g_0 \in G_{\chi}$ such that 
 $g_0 \cdot x_1 = g_1 \cdot x_1$. It follows that $g_1^{-1} g_0 \in G_{x_1}$, with $\chi(g_1^{-1} g_0) = - \chi(g_1) + \chi(g_0) > 0$, 
hence $\chi |_{G_{x_1}} \neq 0$.
     \end{proof}

We obtain part $(1)$ of Theorem \ref{thmSigma1} by combining Propositions \ref{pro1} and \ref{pro2}.
    \end{section}

\begin{section}{The  \texorpdfstring{$\Sigma^1$}{Sigma1}-invariant and Renz's criterion} 
 We shall use the results of Renz \cite{Renz} to consider the characters $\chi: H \wr_X G \to \mathbb{R}$ such that $\chi |_M \neq 0$. Let $\Gamma$ be any
 finitely generated group and let $\mathcal{X} \subseteq \Gamma$ be a finite generating set. For a non-zero character $\chi: \Gamma \to \mathbb{R}$ and for 
 any word $w = x_1 \cdots x_n$, with $x_i \in \mathcal{X}^{\pm 1}$, we denote:
  \[ v_{\chi}(w) := min\{ \chi(x_1 \cdots x_j) \hbox{ }| \hbox{ } 1 \leq j \leq n\}.\]
 
 \begin{theorem}[\cite{Renz}, Theorem 1] \label{Renz1}
 With the notations above,  $[\chi] \in \Sigma^1(\Gamma)$ if and only if there exists $t \in \mathcal{X}^{\pm 1}$
 with $\chi(t)>0$ and such that for all $x \in \mathcal{X}^{\pm 1} \smallsetminus \{t, t^{-1}\}$ the conjugate $t^{-1}xt$ can be represented by a word $w_x$ in $\mathcal{X}^{\pm 1}$
 such that 
 \[v_{\chi}(t^{-1} x t) < v_{\chi} (w_x).\]
 \end{theorem}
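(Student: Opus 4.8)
\emph{Proof proposal.} The plan is to phrase everything in terms of the sublevel sets $\Gamma_{\chi,s}:=\{\gamma\in\Gamma\mid\chi(\gamma)\geq s\}$ for $s\in\mathbb{R}$ (so $\Gamma_{\chi,0}=\Gamma_\chi$) and their full subgraphs $Cay(\Gamma;\mathcal{X})_{\chi,s}$, together with the standard dictionary: a word $w=x_1\cdots x_n$ with $\overline{w}=\gamma$ traces an edge-path from $1$ to $\gamma$ through its partial products, hence one lying in $Cay(\Gamma;\mathcal{X})_{\chi,v_\chi(w)}$; conversely an edge-path in $Cay(\Gamma;\mathcal{X})_{\chi,s}$ from $1$ to $\gamma$ spells a word $w$ with $\overline{w}=\gamma$ and $v_\chi(w)\geq s$. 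The first step is a monotonicity lemma: \emph{if $Cay(\Gamma;\mathcal{X})_\chi$ is connected then $Cay(\Gamma;\mathcal{X})_{\chi,s}$ is connected for every $s$}. This holds because left translation by any $\gamma_0$ is a graph automorphism of the Cayley graph carrying $Cay(\Gamma;\mathcal{X})_{\chi,r}$ onto $Cay(\Gamma;\mathcal{X})_{\chi,r+\chi(\gamma_0)}$, which gives the claim for all $r\in\chi(\Gamma)$; for arbitrary $s$ one writes $Cay(\Gamma;\mathcal{X})_{\chi,s}=\bigcup_{r\in\chi(\Gamma),\,r>s}Cay(\Gamma;\mathcal{X})_{\chi,r}$, a directed union of nonempty connected subgraphs.

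For the forward implication, assume $[\chi]\in\Sigma^1(\Gamma)$ and fix any $t\in\mathcal{X}^{\pm1}$ with $\chi(t)>0$ (one exists since $\chi\neq 0$ and $\mathcal{X}$ generates). For $x\in\mathcal{X}^{\pm1}\smallsetminus\{t,t^{-1}\}$, inspecting the three partial products of $t^{-1}xt$ gives $v_\chi(t^{-1}xt)=-\chi(t)+\min\{0,\chi(x)\}$. Picking $s$ with $-\chi(t)+\min\{0,\chi(x)\}<s\leq\min\{0,\chi(x)\}$ (possible because $\chi(t)>0$), both $1$ and $t^{-1}xt$ lie in $\Gamma_{\chi,s}$, whose subgraph is connected by the lemma; a path between them spells a word $w_x$ with $\overline{w_x}=t^{-1}xt$ and $v_\chi(w_x)\geq s>v_\chi(t^{-1}xt)$. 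So the condition of the theorem holds with this $t$.

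For the converse, suppose that condition holds for some $t$ and some words $w_x$; put $w_t:=t$, $w_{t^{-1}}:=t^{-1}$ and $\delta:=\min_{x\neq t^{\pm1}}\bigl(v_\chi(w_x)-v_\chi(t^{-1}xt)\bigr)>0$. I would prove $Cay(\Gamma;\mathcal{X})_\chi$ connected by showing that every $\gamma$ with $\chi(\gamma)\geq 0$ is represented by a word $w$ with $v_\chi(w)\geq 0$, through a word-rewriting argument using two moves: free insertions $1\to zz^{-1}$ (and their inverses), and the substitutions $t^{-1}zt\to w_z$ for $z\in\mathcal{X}^{\pm1}$. The key local step: given a word $u$ with $v_\chi(u)=-N<0$, take a maximal run of positions whose partial products attain the minimum $-N$; the letter $d$ entering this run has $\chi(d)<0$, the letters $e_1,\dots,e_r$ inside it are $\chi$-neutral, and the letter $a$ leaving it has $\chi(a)>0$ (the run can neither start at the beginning nor end at the end of $u$, since $-N<0\leq\chi(\overline{u})$). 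Inserting $tt^{-1}$ at the front, between consecutive letters, and at the end, and regrouping, one rewrites the subword $d\,e_1\cdots e_r\,a$ as $t\,(t^{-1}dt)(t^{-1}e_1t)\cdots(t^{-1}e_rt)(t^{-1}at)\,t^{-1}$ and then applies the substitutions, reaching $t\,w_d\,w_{e_1}\cdots w_{e_r}\,w_a\,t^{-1}$. Tracking partial products and using $v_\chi(w_z)\geq v_\chi(t^{-1}zt)+\delta$ with $v_\chi(t^{-1}zt)=-\chi(t)+\min\{0,\chi(z)\}$, one checks that in context this new segment stays at level $\geq -N+\delta_0$, where $\delta_0:=\min\bigl(\{\delta\}\cup\{\chi(t)+|\chi(z)|:\chi(z)<0\}\cup\{\chi(z):\chi(z)>0\}\bigr)>0$ depends only on $\mathcal{X},\chi,t$ and the $w_x$'s, while the partial products strictly before and strictly after the segment are unchanged. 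Hence the move deletes one ``bottom valley'' and creates no new position at level $\leq -N$.

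It then remains to run these moves to termination: a word with $v_\chi=-N<0$ has only finitely many bottom valleys, clearing them one at a time strictly decreases their number, and once all are gone $v_\chi$ has risen above $-N$; iterate. \emph{This termination is the step I expect to be the main obstacle.} When $\chi(\Gamma)$ is infinite cyclic it is automatic, since then $v_\chi$ takes values in $\chi(\Gamma)$ and increases by amounts bounded below; when $\chi(\Gamma)$ is dense in $\mathbb{R}$ one must instead show that the finite set of negative sublevel values occurring in the current word evolves in a well-founded manner --- and here the uniform gap $\delta_0>0$, available precisely because the inequality in the hypothesis is \emph{strict}, is what forbids an infinite descent. Making this bookkeeping precise, rather than the (routine) algebra of the rewriting move, is where the real work lies.
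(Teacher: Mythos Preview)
The paper does not prove this theorem at all; it is quoted verbatim from Renz \cite{Renz} and used as a tool. So there is no in-paper proof to compare against, and I will simply comment on your argument.

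Your forward direction is correct. The monotonicity lemma (connectivity of $Cay(\Gamma;\mathcal{X})_\chi$ forces connectivity of every $Cay(\Gamma;\mathcal{X})_{\chi,s}$) is standard, and once you have it the construction of each $w_x$ is clean.

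For the converse, the rewriting idea is the right one, but the valley-by-valley implementation is what creates the termination headache you correctly flag. Clearing all bottom valleys at level $-N$ does push those particular positions up by a fixed amount, but positions of the word that already sat at levels just above $-N$ are untouched, so the new minimum can be arbitrarily close to $-N$; after the next pass you may have introduced yet more positions at various new levels, and one has to control a growing finite multiset of levels. Your constant $\delta_0$ does not obviously bound the per-pass increase of the global minimum, and this is why the argument feels slippery.

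The standard fix (and essentially Renz's argument) is to rewrite \emph{globally} rather than locally: given $u=x_1\cdots x_n$, replace it in one stroke by
\[
\hat u:=t\,w_{x_1}\,w_{x_2}\cdots w_{x_n}\,t^{-1},
\]
which represents the same element since each $w_{x_i}$ represents $t^{-1}x_it$. The partial products of $\hat u$ at the joints between consecutive $w_{x_i}$'s equal $\chi(t)+\chi(x_1\cdots x_i)\geq v_\chi(u)+\chi(t)$; inside each $w_{x_i}$ with $x_i\neq t^{\pm1}$ the minimum is $\geq\min\{\chi(x_1\cdots x_{i-1}),\chi(x_1\cdots x_i)\}+\delta\geq v_\chi(u)+\delta$; and when $x_i=t^{\pm1}$ the single new vertex sits at a joint value. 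The only remaining vertex is the terminal one at $\chi(\overline u)\geq 0$. Hence, as long as $v_\chi(u)<0$,
\[
v_\chi(\hat u)\geq v_\chi(u)+\min\{\delta,\chi(t)\},
\]
a fixed positive increment independent of $u$. Finitely many global passes therefore bring $v_\chi$ up to $0$, with no separate treatment of the discrete and dense cases. Replacing your local move by this global one removes the only genuine obstacle in your proposal.
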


 \begin{prop} \label{sigma1viarenz}
  Let $\Gamma = H \wr_X G$ be a finitely generated wreath product and let $[\chi] \in S(\Gamma)$. Suppose that there is some $x_1 \in X$ such that 
  $G \cdot x_1 \neq \{x_1\}$ and $\chi |_{H_{x_1}} \neq 0$. Then $[\chi] \in \Sigma^1(\Gamma)$.
 \end{prop}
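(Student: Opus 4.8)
The plan is to apply Renz's criterion (Theorem \ref{Renz1}) directly, choosing a suitable finite generating set of $\Gamma$ and a suitable element $t$ inside the copy $H_{x_1}$. Since $\chi|_{H_{x_1}}\neq 0$, pick $h_0\in H_{x_1}$ with $\chi(h_0)>0$; we will take $t=h_0$ and adjoin it to a finite generating set. Concretely, let $Y$ be a finite generating set of $H$ (realized inside $H_{x_1}$), chosen so that $h_0\in Y^{\pm1}$, and let $Z$ be a finite generating set of $G$; then $\mathcal X=Y\cup Z$ generates $\Gamma$ (recall from Section \ref{sw} that finite generation of $\Gamma$ forces $G$ to have finitely many orbits on $X$, but for generation we only need one orbit representative since the $G$-translates produce all the other copies of $H$). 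The point of the hypothesis $G\cdot x_1\neq\{x_1\}$ is that there exists $g\in G$ with $g\cdot x_1=x_2\neq x_1$, so the copy $H_{x_2}$ commutes with $H_{x_1}$ elementwise; this is the commutation that will make the conjugates $t^{-1}xt$ ``cheap'' to rewrite.

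The key computation is to check, for each $x\in\mathcal X^{\pm1}\smallsetminus\{t,t^{-1}\}$, that $t^{-1}xt$ admits a word $w_x$ with $v_\chi(t^{-1}xt)<v_\chi(w_x)$. Split into cases. If $x\in Z^{\pm1}$ (so $x\in G$), write $t^{-1}xt=t^{-1}x t= x\cdot (x^{-1}t^{-1}x)\,t$; here $x^{-1}t^{-1}x\in H_{x^{-1}\cdot x_1}$. When $x^{-1}\cdot x_1\neq x_1$ this factor commutes with $t\in H_{x_1}$, and one rewrites $t^{-1}xt=x\cdot{}^{x^{-1}}(t^{-1})$ — a word whose partial sums avoid the negative dip $\chi(t^{-1})=-\chi(h_0)<0$ that the naive word $t^{-1}\cdot x\cdot t$ incurs at its first letter. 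When $x^{-1}\cdot x_1=x_1$ one instead uses the translate $g$ above: conjugating into the commuting copy $H_{x_2}$ first, i.e.\ replacing $t$ by the ``detour'' ${}^{g}t'\cdot t^{-1}$ type of rewriting, lets us realize the conjugate through letters living in $H_{x_2}$ and $G$ only, again skipping the bad initial $\chi(t^{-1})$. If $x\in Y^{\pm1}$ (so $x\in H_{x_1}$), then $t^{-1}xt\in H_{x_1}$ and $\chi(t^{-1}xt)=\chi(x)$; one rewrites via a conjugation pushed into $H_{x_2}$ through $g$, namely $t^{-1}xt={}^{g^{-1}}\bigl({}^{g}(t^{-1})\,{}^{g}x\,{}^{g}t\bigr)$ where the inner three factors lie in $H_{x_2}$, each commuting with everything relevant, so the partial sums of the resulting word in $\mathcal X^{\pm1}$ stay at or above $\min\{0,\chi(x)\}$ while $v_\chi(t^{-1}xt)=\min\{-\chi(h_0),\ \chi(x)-\chi(h_0),\ \chi(x)\}<\min\{0,\chi(x)\}$ because $\chi(h_0)>0$. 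In all cases the strict inequality $v_\chi(t^{-1}xt)<v_\chi(w_x)$ holds.

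The main obstacle I anticipate is bookkeeping: verifying that the rewritten words $w_x$ are genuinely words in $\mathcal X^{\pm1}$ (not in conjugates) and that their partial sums really dominate $v_\chi(t^{-1}xt)$ strictly, uniformly over the finitely many generators $x$. The conceptual content is minimal — it is exactly the observation that the existence of a second copy $H_{x_2}$ commuting with $H_{x_1}$ gives enough room to "route around" the single negative excursion caused by $t^{-1}$ — but one must be careful in the subcase $x^{-1}\cdot x_1=x_1$ (and its sibling for $x\in Y^{\pm1}$), where the commutation is not immediate and one genuinely needs to first transport into $H_{x_2}$ using the fixed element $g$. Once these finitely many rewritings are exhibited, Theorem \ref{Renz1} yields $[\chi]\in\Sigma^1(\Gamma)$ directly.
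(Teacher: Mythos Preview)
Your overall strategy --- apply Renz's criterion (Theorem \ref{Renz1}) with a carefully chosen $t$ --- is exactly the paper's strategy. But the single most important choice, \emph{where to place $t$}, is made differently, and your choice creates a case you do not actually handle.

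You take $t=h_0\in H_{x_1}$. Then for $y\in Y\subseteq H_{x_1}$ the elements $t$ and $y$ need not commute, and your proposed rewriting
\[
t^{-1}yt \;=\; {}^{g^{-1}}\bigl({}^g(t^{-1})\,{}^g y\,{}^g t\bigr)
\]
is a tautology: as a group identity it is just $t^{-1}yt$, and if you spell it out as a word $g^{-1}gt^{-1}g^{-1}gyg^{-1}gtg^{-1}g$ its partial $\chi$-sums include $-\chi(t)$ and $-\chi(t)-\chi(g)$, so $v_\chi$ is \emph{not} improved (and is worse when $\chi(g)>0$). The claim that the inner factors ``commute with everything relevant'' has no content here: commutation with elements of $H_{x_1}$ is never invoked in that word. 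So the $Y$-case is a genuine gap. (There is also a slip in the $Z$-case: $x\cdot{}^{x^{-1}}(t^{-1})=t^{-1}x\neq t^{-1}xt$; the correct word is $xtx^{-1}t^{-1}x$, obtained from $({}^x t)t^{-1}=t^{-1}({}^x t)$. And for $x\in G_{x_1}$ no detour is needed at all: in a permutational wreath product the stabiliser $G_{x_1}$ acts trivially on $H_{x_1}$, so $x$ and $t$ commute and $w_x:=x$ works.)

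The paper avoids the $Y$-problem entirely by putting $t$ in a \emph{different} copy: choose $g_1\in G$ with $g_1\cdot x_1\neq x_1$ and $h\in Y_1$ with $\chi(h)>0$, and set $t:={}^{g_1}h\in H_{g_1\cdot x_1}$, adjoined to the generating set. Since $g_1\cdot x_1$ is distinct from every orbit representative $x_i$, the element $t$ commutes with all generators in $\bigcup_i Y_i$, and $w_y:=y$ works immediately. The only nontrivial case is then $z\in Z^{\pm1}$ with $z\notin G_{g_1\cdot x_1}$, where the same commutator trick you sketched gives $w_z=ztz^{-1}t^{-1}z$. Note also that your generating set $Y\cup Z$ with $Y\subseteq H_{x_1}$ only generates $\Gamma$ when the $G$-action on $X$ is transitive; in general one needs $Y_1\cup\cdots\cup Y_n\cup Z$ with one $Y_i$ per orbit, which is another reason placing $t$ away from all the $x_i$ is convenient.
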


 \begin{proof}
  Let  $Y$ and $Z$ be finite generating sets for $H$ and $G$, respectively, and choose $x_1, \ldots, x_n \in X$ such that 
  $X = \bigsqcup_{j=1}^n G \cdot x_j$ (the element $x_1$ is already chosen to satisfy the hypotheses). For each $1 \leq j \leq n$
  let $Y_j$ be a copy of $Y$ inside $H_{x_j}$. It is clear that $\Gamma$ is generated by $Y_1 \cup \ldots \cup Y_n \cup Z$.
  
  Now, since $G \cdot x_1 \neq \{x_1\}$ we can choose $g_1 \in G$ such that $g_1 \cdot x_1 \neq x_1$. Furthermore, since $\chi |_{H_{x_1}} \neq 0$, 
  we can choose a generator $h \in Y_1$ such that $\chi (h) \neq 0$. We may assume without loss of generality that $\chi(h)>0$. Define 
  $t := {^{g_1}h} \in H_{g_1 \cdot x_1}$. We take $\mathcal{X} = Y_1 \cup \ldots \cup Y_n \cup Z \cup \{t\}$ as a generating set for $\Gamma$ and we show
  that the conditions of Theorem \ref{Renz1} are satisfied.
  
  If $y \in (Y_1\cup \ldots \cup Y_n)^{\pm 1}$ then $t$ and $y$ commute in $\Gamma$, hence $w_y := y$ is word that represents $t^{-1} y t$. Also,
  $v_{\chi}(w_y) = \chi(y)$ and 
  \[v_{\chi}(t^{-1} y t) \leq \chi(t^{-1}y) = \chi(y) - \chi(t) < \chi(y),\]
  so $v_{\chi}(t^{-1}yt) < v_{\chi}(w_y)$.
  
  If $z \in Z^{\pm 1}$, there are two cases: $z \in G_{g_1 \cdot x_1}$ or $z \notin G_{g_1 \cdot x_1}$. In the first case $z$ and $t$ commute in $\Gamma$,
  so we may proceed as in the previous paragraph: we take the word $w_z := z$, which represents $t^{-1}zt$ and satisfies $v_{\chi}(t^{-1}zt) < v_{\chi}(w_z)$.
  If $z \notin G_{g_1 \cdot x_1}$ notice that $^{z}t$ and $t^{-1}$ lie in different copies of $H$ in $\Gamma$, therefore they commute, so:
  \[ t^{-1} z t = t^{-1} ({^{z}t}) z = ({^{z}t}) t^{-1} z = z t z^{-1} t^{-1} z.\]
  In this case define $w_z := z t z^{-1} t^{-1} z$. Observe that $v_{\chi}(w_z) = min \{0, \chi(z)\}$. If this minimum is $0$ then $\chi(z) \geq 0$, 
  and so $v_{\chi}(t^{-1} z t) = -\chi(t) < 0$. Otherwise $v_{\chi}(w_z) = \chi(z) < 0$ and so
  $v_{\chi}(t^{-1} z t) \leq \chi(t^{-1}z) = \chi(z) - \chi(t) < \chi(z)$. In both cases $v_{\chi}(t^{-1} z t) < v_{\chi}(w_z)$. 
  
  Thus $[\chi] \in \Sigma^1(\Gamma)$ by Theorem \ref{Renz1}.  
 \end{proof}
 
 In order to complete the proof of Theorem \ref{thmSigma1}, we only need to consider the cases where the restriction of $\chi$ to the copies of $H$ is 
 non-zero only for copies associated to orbits that are composed by only one element, and this is done by use of the direct product formula, as follows.
 
 \begin{theorem} \label{reswreath2}
Let $\Gamma = H \wr_X G$ be a finitely generated wreath product and set $M = \oplus_{x \in X} H_x \subseteq \Gamma$. Let $\chi: \Gamma \to \mathbb{R}$
be a non-zero character such that $\chi |_M \neq 0$. 
Then $[\chi] \in \Sigma^1(\Gamma)$ if and only if at least one the following conditions holds:
\begin{enumerate}
 \item The set $T = \{x \in X | \hbox{ } \chi |_{H_x} \neq 0\}$ has at least two elements;
 \item $T = \{x_1\}$ and $\chi |_G \neq 0$;
 \item $T = \{x_1\}$ and $[\chi |_{H_{x_1}}] \in \Sigma^1(H)$.
\end{enumerate}
 \end{theorem}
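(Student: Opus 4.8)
The plan is to reduce Theorem \ref{reswreath2} to a statement about a direct product and then apply the direct product formula (Theorem \ref{GehrkeThm}) together with the earlier results. First observe that the interesting case is when every orbit meeting $T$ is a singleton: if some $x_1 \in T$ lies in an orbit $G\cdot x_1 \neq \{x_1\}$, then Proposition \ref{sigma1viarenz} already gives $[\chi] \in \Sigma^1(\Gamma)$, and one checks this is consistent with the claimed statement — indeed in that situation we want to show at least one of (1), (2), (3) holds automatically. I would argue that if $G\cdot x_1$ has more than one element and $\chi|_{H_{x_1}}\neq 0$, then since $\chi$ is a homomorphism invariant under the $\Gamma$-action in the sense that conjugating $H_{x_1}$ to $H_{g\cdot x_1}$ preserves (non)vanishing of $\chi$ up to the character data — wait, that is false in general since $\chi$ need not be $G$-invariant. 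The cleaner observation: the whole content of Proposition \ref{sigma1viarenz} is that such $[\chi]$ is in $\Sigma^1(\Gamma)$, so for the equivalence we must also verify the right-hand side is satisfied. But actually in that case either $T$ has two elements (condition (1)) or $T=\{x_1\}$; if $T=\{x_1\}$ with $G\cdot x_1$ non-trivial, pick $g$ with $g\cdot x_1 \neq x_1$, so ${}^g h \in H_{g\cdot x_1}$ with $g\cdot x_1 \notin T$, forcing $\chi({}^g h)=0$; combined with $\chi(h)\neq 0$ this shows $\chi$ is not invariant under $g$, hence $\chi|_G \neq 0$, i.e. condition (2) holds. So in the ``some orbit of $T$ is non-singleton'' case, both sides hold and we are done.

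Next, assume every element of $T$ lies in a singleton orbit. Write $X = X_0 \sqcup X_1$, where $X_0$ is the union of all singleton orbits and $X_1$ the union of all non-singleton orbits (both are $G$-invariant). Then $T \subseteq X_0$. Let $F = \bigoplus_{x\in X_0} H_x$; since each $x \in X_0$ is fixed by $G$, the subgroup $F$ is central-type in the sense that $G$ acts trivially on it, so $F$ is a direct factor: $\Gamma = F \times (H\wr_{X_1}G)$, where by convention $H\wr_{\emptyset}G = G$. Moreover $F$ is itself a finite direct product $\prod_{x\in X_0}H$ because $X_0$ is finite (finite generation of $\Gamma$ forces finitely many orbits, hence finitely many singleton orbits). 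Thus $F \cong H^{k}$ for some $k \geq 1$, and we may further split off the single copy $H_{x_1}$ when $T=\{x_1\}$. Writing $\chi = (\chi_F, \chi')$ under $\Gamma = F \times (H\wr_{X_1}G)$, we have $\chi'|_{M \cap (H\wr_{X_1}G)} = 0$ (since $T\subseteq X_0$), so $\chi'$ either vanishes or is pulled back from $G$.

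Now apply the $\Sigma^1$ direct product formula iteratively: $[\chi]\in\Sigma^1(\Gamma)$ iff either both $\chi_F \neq 0$ and $\chi' \neq 0$, or $[\chi_F]\in\Sigma^1(F)$, or $[\chi']\in\Sigma^1(H\wr_{X_1}G)$. I then analyze each case. If $|T|\geq 2$, then within $F = \prod_{x\in X_0}H$ the restriction $\chi_F$ is non-zero on at least two distinct factors, so by the direct product formula applied inside $F$ we get $[\chi_F]\in\Sigma^1(F)$ directly (two non-trivial coordinates suffice), hence $[\chi]\in\Sigma^1(\Gamma)$: condition (1) $\Rightarrow$ membership. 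Conversely, if $T=\{x_1\}$, write $F = H_{x_1}\times F'$ with $\chi|_{F'}=0$; then $[\chi_F]\in\Sigma^1(F)$ iff $[\chi|_{H_{x_1}}]\in\Sigma^1(H)$ (the only way to get the direct product condition when one coordinate is zero), giving condition (3). And $\chi' \neq 0$ exactly when $\chi|_G \neq 0$ — here one must be slightly careful: $\chi'$ is the restriction of $\chi$ to $H\wr_{X_1}G$, which contains $G$; since $\chi$ kills all $H_x$ with $x\in X_1$ (as $T\subseteq X_0$), $\chi'$ is determined by $\chi|_G$, and $\chi'\neq 0 \iff \chi|_G\neq 0$. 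So in the $T=\{x_1\}$ case the direct product formula reads: $[\chi]\in\Sigma^1(\Gamma)$ iff ($\chi|_{H_{x_1}}\neq 0$ and $\chi|_G\neq 0$) or $[\chi|_{H_{x_1}}]\in\Sigma^1(H)$ or $[\chi']\in\Sigma^1(H\wr_{X_1}G)$; but $\chi|_{H_{x_1}}\neq 0$ always holds here since $x_1\in T$, so the first disjunct simplifies to $\chi|_G\neq 0$ (condition (2)), the second is condition (3), and the third — by part (1) of Theorem \ref{thmSigma1} applied to $H\wr_{X_1}G$ with the character pulled back from $G$ — requires in particular $\chi|_G\neq 0$, so it is subsumed. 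This closes the equivalence.

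The main obstacle I anticipate is the bookkeeping in separating singleton from non-singleton orbits and correctly identifying which restriction of $\chi$ plays the role of $\chi_i$ in the direct product formula — in particular making sure that ``$\chi|_G \neq 0$'' is exactly the non-vanishing condition on the $H\wr_{X_1}G$ factor, and handling the degenerate case $X_1 = \emptyset$ (where $H\wr_{X_1}G = G$ and the formula still applies). A secondary subtlety is confirming that when $T=\{x_1\}$ one genuinely needs $x_1$ to be a singleton orbit for this decomposition — this is guaranteed because the non-singleton-orbit case was disposed of at the outset via Proposition \ref{sigma1viarenz}. Everything else is a routine application of Theorem \ref{GehrkeThm} and part (1) of Theorem \ref{thmSigma1}.
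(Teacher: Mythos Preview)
Your approach is essentially the paper's: reduce via Proposition \ref{sigma1viarenz} to the case where every $x \in T$ has a singleton orbit, then split off a direct factor and apply Theorem \ref{GehrkeThm}. The paper splits off only $P = \prod_{x \in T} H_x$ (with $X' = X \smallsetminus T$) rather than all singleton orbits, but your decomposition works equally well and the remaining case analysis is the same.

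There is one genuine error in your first paragraph. You argue that if $T = \{x_1\}$ with $G \cdot x_1$ non-trivial, then picking $g$ with $g \cdot x_1 \neq x_1$ gives $\chi({}^g h) = 0$ (since $g \cdot x_1 \notin T$) while $\chi(h) \neq 0$, and conclude that ``$\chi$ is not invariant under $g$, hence $\chi|_G \neq 0$''. But any character $\chi: \Gamma \to \mathbb{R}$ is \emph{automatically} conjugation-invariant, because $\mathbb{R}$ is abelian: $\chi({}^g h) = \chi(g) + \chi(h) - \chi(g) = \chi(h)$ for all $g, h \in \Gamma$. So the two facts $\chi({}^g h) = 0$ and $\chi(h) \neq 0$ are contradictory, which means the scenario ``$T = \{x_1\}$ with $G \cdot x_1$ non-trivial'' simply cannot occur. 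The correct inference is that $T$ is a union of $G$-orbits; hence if some $x_1 \in T$ has a non-singleton orbit then $G \cdot x_1 \subseteq T$ and $|T| \geq 2$, so condition (1) holds --- not condition (2). Your overall conclusion (that the right-hand side of the equivalence is satisfied in this case) is still correct, just for a different reason; once this is fixed the rest of your argument goes through.
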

 
 \begin{proof}
  By Proposition \ref{sigma1viarenz} it is enough to consider the case where $G \cdot x = \{x\}$ for all $x \in T$. Notice that in this case $T$ must be 
  finite, since each of its elements is an entire orbit of $G$ on $X$, and there are finitely many of those. Let $P = \prod_{x \in T} H_x$ and $X' = X \smallsetminus T$.
  Then
  \[ \Gamma = H \wr_X G \simeq P \times (H \wr_{X'} G).\]
  If $T$ has at least two elements, then $[\chi |_P] \in \Sigma^1(P)$ and hence $[\chi] \in \Sigma^1(\Gamma)$, by two applications of the direct product 
  formula for $\Sigma^1$. If $T = \{x_1\}$, the formula gives us exactly that $[\chi] \in \Sigma^1(\Gamma)$ if and only if one of conditions
  $(2)$ or $(3)$ holds, since $\chi |_G \neq 0$ if and only if $\chi |_{H \wr_{X'} G} \neq 0$.
  \end{proof}
\end{section}

\begin{section}{Graph-wreath products} \label{gwp}
We now digress a bit and obtain a generalization of the results of Section \ref{sw} to a wider class of groups. Besides being interesting in its own
right, this will be useful in the analysis of the 
$\Sigma^2$-invariants of wreath products.

Given two groups $G$ and $H$ and $K$ a $G$-graph, the \textit{graph-wreath product} $H \grw_K G$ is defined by Kropholler and Martino \cite{KrophollerMartino} 
as the semi-direct product $H^{\langle K \rangle} \rtimes G$, where $H^{\langle K \rangle}$ is the graph product of $H$ with respect to the graph $K$ (that 
is, $H$ is the group associated to every vertex of $K$). The action of $G$ is given by permutation
of the copies of $H$ according to the $G$-action on the vertex set of $K$.
When $K$ is the complete graph $H \grw_K G$ is simply $H \wr_X G$, where $X$ is the vertex set of $K$.
        
Kropholler and Martino showed that $H \grw_K G$ is finitely generated if and only if $G$ and $H$ are finitely generated 
and $G$ acts with finitely many orbits of vertices on $K$, that is, $H \grw_K G$ is finitely generated under the same conditions as $H \wr_X G$ is, where
$X$ is the vertex set of $K$.
 
 In what follows we fix $\Gamma = H \grw_K G$ and $M = H^{\langle K \rangle} \subseteq \Gamma$. We assume that $\Gamma$ is finitely generated and we decompose
 $X$ in orbits as $X = G \cdot x_1 \sqcup \ldots \sqcup G \cdot x_n$.
 Moreover, we choose finite generating sets $Z$ for $G$ and $Y_i$ for $H_{x_i}$ for all $i=1, \ldots, n$ and we denote 
 $\mathcal{X} = (\cup_{i=1}^n Y_i) \cup Z$, which is seen as a generating set for $\Gamma$.
 
     \begin{theorem} \label{p1}  
 Let $\chi: H \grw_K G \to \mathbb{R}$ be a non-zero character such that $\chi |_M = 0$.
 Then $[\chi] \in \Sigma^1(H \grw_K G)$ if and only if $[\chi |_G] \in \Sigma^1(G)$ and $\chi |_{G_x} \neq 0$ for all $x \in X$. 
     \end{theorem}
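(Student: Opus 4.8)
The plan is to prove the two implications of Theorem~\ref{p1} separately. The necessity of the two conditions is obtained by essentially the argument of Proposition~\ref{pro2}, with the direct sum $M$ replaced by the graph product $H^{\langle K\rangle}$. The sufficiency, which for ordinary wreath products was Proposition~\ref{pro1} (deduced there from Theorem~\ref{BCKThm}), has instead to be proved by a direct construction of paths in the Cayley graph $Cay(\Gamma;\mathcal{X})$, since no analogue of Theorem~\ref{BCKThm} is available for graph-wreath products; equivalently one could make $\Gamma$ act on a suitable complex, as announced in the remark following Proposition~\ref{pro1}.

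\emph{Necessity.} Suppose $[\chi]\in\Sigma^1(\Gamma)$. As recalled in Section~\ref{sw}, since $M\subseteq\ker\chi$ and $\chi|_G$ is the character induced on $\Gamma/M\cong G$, the image of the connected graph $Cay(\Gamma;\mathcal{X})_\chi$ under this quotient is connected, so $[\chi|_G]\in\Sigma^1(G)$; in particular $\chi|_G\neq 0$. For the stabilizers I run the rewriting argument of Proposition~\ref{pro2}: a path in $Cay(\Gamma;\mathcal{X})_\chi$ from $1$ to $m\in M$ writes $m$ as a product of $G_\chi$-conjugates of the generators in $\bigcup_i Y_i$, so $M=\langle H_x\mid x\in\bigcup_i G_\chi\cdot x_i\rangle$. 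If $G_\chi\cdot x_i$ were a proper subset of $G\cdot x_i$ for some $i$, pick $x_0\in (G\cdot x_i)\setminus(G_\chi\cdot x_i)$; the orbits $G\cdot x_j$ being disjoint, $x_0$ lies in none of the sets $G_\chi\cdot x_j$, so the standard retraction of $H^{\langle K\rangle}$ killing the vertex groups outside $\bigcup_j G_\chi\cdot x_j$ kills $H_{x_0}$, contradicting $M=\langle H_x\mid x\in\bigcup_j G_\chi\cdot x_j\rangle$ and $H\neq 1$. Hence $G_\chi\cdot x_i=G\cdot x_i$ for every $i$. Finally, as $\chi|_G\neq 0$ pick $g_1\in G$ with $\chi(g_1)<0$; then $g_1\cdot x_i\in G_\chi\cdot x_i$, so $g_0\cdot x_i=g_1\cdot x_i$ for some $g_0\in G_\chi$, and $g_1^{-1}g_0\in G_{x_i}$ has $\chi$-value $\chi(g_0)-\chi(g_1)>0$; thus $\chi|_{G_{x_i}}\neq 0$, and since $\chi$ is constant on conjugacy classes the same holds for every stabilizer $G_x$.

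\emph{Sufficiency.} Assume $[\chi|_G]\in\Sigma^1(G)$ and $\chi|_{G_x}\neq 0$ for all $x$; write $\bar\chi=\chi|_G$, recall $G_\chi=\{g\in G\mid\chi(g)\geq 0\}$ and put $\Gamma_\chi=\{\gamma\in\Gamma\mid\chi(\gamma)\geq 0\}$. The key preliminary observation is that $X=G_\chi\cdot x_i$ for every $i$: given $x=g\cdot x_i$, choose $h\in G_{x_i}$ with $\chi(h)>0$ (possible since $\chi|_{G_{x_i}}\neq 0$); then $gh^n\cdot x_i=x$ while $\chi(gh^n)\to\infty$, so $gh^n\in G_\chi$ for $n$ large. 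I then prove $Cay(\Gamma;\mathcal{X})_\chi$ connected by producing, for an increasing family of elements, paths joining them to $1$ and lying entirely in $\Gamma_\chi$. (i)~Every $g\in G_\chi$ is joined to $1$: lift a path in $Cay(G;Z)_{\bar\chi}$, which exists because $[\bar\chi]\in\Sigma^1(G)$. (ii)~For $g\in G_\chi$ and a generator $y$ of some $H_{x_i}$, join $1$ to ${}^{g}y$ by running $1\to g\to gy$ and then, starting at $gy$, reading a word $z_1\cdots z_k$ in $Z^{\pm1}$ which spells $g^{-1}$ and traces a path $g=p_0,p_1,\dots,p_k=1$ in $Cay(G;Z)_{\bar\chi}$: after $z_1\cdots z_j$ one sits at $gyg^{-1}p_j$, of $\chi$-value $\chi(p_j)\geq 0$, and the last vertex is ${}^{g}y$. (iii)~By (i), (ii) and the preliminary observation the set of elements of $M$ joined to $1$ contains a generating set $\{\,{}^{g}y\,\}$ of $M$; it is closed under products and inverses, because left multiplication by an element of $\ker\chi$ preserves $\Gamma_\chi$ and the Cayley edge-labels, hence it is all of $M$. (iv)~An arbitrary $\gamma=mg\in\Gamma_\chi$ satisfies $\chi(g)=\chi(\gamma)\geq 0$, so $g\in G_\chi$; writing $\gamma=g\cdot{}^{g^{-1}}m$ with ${}^{g^{-1}}m\in M$ and left-translating by $g$ a path from $1$ to ${}^{g^{-1}}m$ joins $g$ to $\gamma$ inside $\Gamma_\chi$, and composing with a path from $1$ to $g$ given by (i) finishes the proof.

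The step I expect to be the main obstacle is this path bookkeeping in the sufficiency direction: one must choose the auxiliary paths in $G$ so that, after being carried into $\Gamma$ by left translation and conjugation, all their vertices still have $\chi\geq 0$. This is exactly where the two hypotheses are used — $[\chi|_G]\in\Sigma^1(G)$ supplies paths in $G$ never leaving $G_\chi$, and $\chi|_{G_x}\neq 0$, through $X=G_\chi\cdot x_i$, lets the conjugating elements be chosen in $G_\chi$ — while the graph product structure of $M$ enters only in the necessity argument, via the retraction of $H^{\langle K\rangle}$ onto a full sub-graph-product and the consequent trivial intersection of a vertex subgroup with the subgroup generated by the others.
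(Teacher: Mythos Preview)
Your proof is correct. Both directions work, including the path bookkeeping you flag as the delicate step; in particular your observation that ${}^gy={}^{g'}y$ whenever $g\cdot x_i=g'\cdot x_i$ (since stabilizers act trivially on the corresponding vertex group) is what makes step~(iii) go through. One small slip: the ``preliminary observation'' should read $G\cdot x_i=G_\chi\cdot x_i$, not $X=G_\chi\cdot x_i$; your argument proves and uses the former.

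Your route differs from the paper's in organisation. The paper does not redo Proposition~\ref{pro2} for graph products; instead it passes to the quotient $\bar\Gamma=\Gamma/N_K\simeq H\wr_X G$, where $N_K=\ker\bigl(H^{\langle K\rangle}\twoheadrightarrow\oplus_x H_x\bigr)\subseteq\ker\chi$. For necessity this reduces immediately to Theorem~\ref{thmSigma1} via the quotient behaviour of $\Sigma^1$. For sufficiency the paper again uses the quotient: knowing $[\bar\chi]\in\Sigma^1(\bar\Gamma)$, it lifts a path from $1$ to $\bar\gamma$ and lands at $\gamma n$ with $n\in N_K$, so the only remaining path construction is from $1$ to an element of $N_K$; that construction is exactly your step~(ii)--(iii), but restricted to $N_K$ rather than all of $M$. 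The paper's approach is more modular (it recycles the ordinary wreath product result and only handles the graph-product kernel $N_K$), while yours is self-contained and shows that the argument requires nothing specific to $N_K$. Your necessity argument has the mild advantage of exhibiting where the graph-product structure enters (via the retraction onto a full sub-graph-product), which the paper's quotient manoeuvre hides.
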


     \begin{proof}
 Let $N_K$ be the kernel of the obvious homomorphism $M \twoheadrightarrow \oplus_{x \in X} H_x$. Note that
  $N_K \subseteq ker(\chi)$ and that $\bar{\Gamma}:=\Gamma/N_K \simeq H \wr_X G$. It follows that $\chi$ induces a character 
 $\bar{\chi}: \bar{\Gamma} \to \mathbb{R}$. For an element $\gamma \in \Gamma$, we denote by $\bar{\gamma}$ its image in $\bar{\Gamma}$.
 
 If $[\chi] \in \Sigma^1(\Gamma)$, then $[\bar{\chi}] \in \Sigma^1(\bar{\Gamma})$ (again by Proposition A4.5 in
 \cite{Strebel}). Thus
 $[\chi |_G] \in \Sigma^1(G)$ and $\chi |_{G_x} \neq 0$ for all $x \in X$ by Theorem \ref{thmSigma1}.
 
Conversely, suppose that $[\chi |_G] \in \Sigma^1(G)$ and that $\chi |_{G_x} \neq 0$ for all $x \in X$. Then
 $[\bar{\chi}] \in \Sigma^1(\bar{\Gamma})$. We will show that this implies that $Cay(\Gamma;\mathcal{X})_{\chi}$ is connected. 
 
 We need to show that for all $\gamma \in \Gamma_{\chi}$, there is a path in $Cay(\Gamma;\mathcal{X})_{\chi}$ connecting $1$ and $\gamma$. 
 Given such a $\gamma$, notice that $\bar{\gamma} \in \bar{\Gamma}_{\bar{\chi}}$, so there must be a path from $1$ to
 $\bar{\gamma}$ in $Cay(\bar{\Gamma}; \bar{\mathcal{X}})_{\bar{\chi}}$. Its obvious lift to $Cay(\Gamma;\mathcal{X})$ with $1$ as initial vertex
 is a path in $Cay(\Gamma;\mathcal{X})_{\chi}$ that ends at an element of the form $\gamma n$, with $n \in N_K$. If we can connect $\gamma$
 to $\gamma n$ inside $Cay(\Gamma;\mathcal{X})_{\chi}$ we are done. For that it suffices to find a path in $Cay(\Gamma;\mathcal{X})_{\chi}$ connecting $1$ and $n$,
 and then act with $\gamma$ on the left.
 
 Since $N_K \subseteq M$, each $n \in N_K$ can be written as:
 \begin{equation} \label{prim}
  n = (^{g_1}h_1) (^{g_2}h_2) \cdots (^{g_k}h_k),
 \end{equation}
 with $h_j \in \cup_{i=1}^n Y_i^{\pm 1}$ and $g_j \in G$ for all $j$. Even more, we may assume that each $\chi(g_j) \geq 0$. Indeed, since 
 $\chi |_{G_x} \neq 0$ for all $x$, we can always pick $t_j \in G$ such that $\chi(t_j) > 0$ and $^{t_j} h_j = h_j$. Then we may change $g_j$ for $g_j t_j^{k_j}$
 in \eqref{prim}, where $k_j$ is some integer such that $k_j \chi(t_j) \geq - \chi(g_j)$.
 
 But if $\chi(g_j) \geq 0$ then $g_j \in G_{\chi |_{G}}$, and since $[\chi |_{G}] \in \Sigma^1(G)$, we can choose words $w_j$ in $Z^{\pm 1}$ 
 representing $g_j$ and such that $v_{\chi}(w_j) \geq 0$. Finally, the word
 \[ w = (w_1 h_1 w_1^{-1})(w_2 h_2 w_2^{-1}) \cdots (w_k h_k w_k^{-1})\]
 is the label for a path connecting $1$ and $n$ in $Cay(\Gamma; \mathcal{X})_{\chi}$, by the choice of each $w_j$ together with the fact 
 that $\chi(h_j) = 0$ for all $j$ by hypothesis. 
     \end{proof}     
     
The above result will be needed only in a special case, namely when $K$ is a graph without edges, so that $\Gamma \simeq (\ast_{x \in X} H_x) \rtimes G$.
\end{section}

\begin{section}{The \texorpdfstring{$\Sigma^2$}{Sigma2}-invariant}
  Renz's paper \cite{Renz} also brings a criterion for the invariant $\Sigma^2$. In order to state it, we need to introduce the concept of a 
  diagram over a group presentation, for which we follow \cite{Bridson}. Fix an orientation on $\mathbb{R}^2$. Define a \textit{diagram} to be a subset 
  $M \subseteq \mathbb{R}^2$ endowed with the structure of a finite combinatorial $2$-complex. Thus to each $1$-cell of $M$ correspond two opposite 
  directed edges. If $\langle \mathcal{X} | \mathcal{R} \rangle$ is a presentation for a group $\Gamma$, a \textit{labeled diagram over} $\langle \mathcal{X} | \mathcal{R} \rangle$ is a diagram $M$ endowed with an edge labeling 
  satisfying:
  \begin{enumerate}
   \item The edges of $M$ are labeled by elements of $\mathcal{X}^{\pm 1}$;
   \item If an edge $e$ has label $x$, then its opposite edge has label $x^{-1}$;
   \item The boundary of each face of $M$, read as a word in $\mathcal{X}^{\pm 1}$, beginning at any vertex and proceeding with either orientation, 
         is either a cyclic permutation of some $r \in \mathcal{R}^{\pm 1}$, or a word of the form $tt^{-1}t^{-1}t$ for some $t \in  \mathcal{X}^{\pm 1}$.
  \end{enumerate}
A labeled diagram $M$ is said to be \textit{simple} if it is connected and simply connected.

\begin{rem}
This is a weakening of the definition of the usual \textit{van Kampen diagrams}. In fact, a simple diagram $M$, with a vertex chosen as a base point, 
differs from a van Kampen diagram only by the fact that it can have what we call \textit{trivial faces}, that is, those labeled by $tt^{-1}t^{-1}t$
for some $t \in  \mathcal{X}^{\pm 1}$. This weakening has the effect of simplifying the drawing of some diagrams that we will consider in the 
sequence (see \cite{Renz}, Subsection 3.3).
\end{rem}
 

Suppose that we are given a simple diagram $M$ with a base point $u$ (a vertex in the boundary of $M$) and an element $\gamma \in \Gamma$. Then to each vertex 
$u'$ of $M$ corresponds a unique element of $\Gamma$, given by $\gamma \eta$, where $\eta$ is the image in $\Gamma$ of the label of any path connecting $u$ and
$u'$ inside $M$. In particular, the given group element $\gamma$ corresponds to the base point $u$. For any character 
$\chi: \Gamma \to \mathbb{R}$ we define the $\chi$-\textit{valuation} of $M$ with respect to $u$ and $\gamma$, denoted by $v_{\chi}(M)$, to be the 
minimum value of $\chi(g)$ when $g$ runs over the elements of $\Gamma$ corresponding to the vertices of $M$. 

Now, suppose that $\Gamma$ is finitely presented (with $\langle \mathcal{X} | \mathcal{R} \rangle$ a finite presentation) and assume $[\chi] \in \Sigma^1(\Gamma)$.
Then we can
distinguish an element $t \in \mathcal{X}^{\pm 1}$ with $\chi(t) > 0$ with which we can apply Renz's criterion for $\Sigma^1$: for each 
$x \in \mathcal{X}^{\pm 1} \smallsetminus \{t, t^{-1}\}$ we can associate a word $w_x$ in $\mathcal{X}^{\pm 1}$ that represents $t^{-1}xt$ and for which
$v_{\chi}(t^{-1} x t) < v_{\chi}(w_x)$. Additionally, we put $w_t := t$ and 
$w_{t^{-1}} := t^{-1}$. If $r = x_1 \cdots x_n \in \mathcal{R}^{\pm 1}$, we define:
\[ \hat{r} := w_{x_1} \cdots w_{x_n}.\]
We are now ready to enunciate the criterion for $\Sigma^2$.

\begin{theorem}[\cite{Renz}, Theorem 3] \label{Renz2}
 Let $\Gamma$, $\mathcal{X}$ and $t$ be as above. Suppose that the set $\mathcal{R}$ of defining relations contains some cyclic permutation of the words $t^{-1} x t w_x^{-1}$, for all $x \in \mathcal{X}^{\pm 1}$.
Then  $[\chi] \in \Sigma^2(\Gamma)$ if and only if for each $r \in \mathcal{R}^{\pm 1}$ there exist a simple diagram $M_{\hat{r}}$ and vertex $u$
in its boundary, such that both the following conditions hold:
\begin{enumerate}
 \item The boundary path of $M_{\hat{r}}$, read from $u$, has as label the word $\hat{r}$;
 \item $v_{\chi}(r) < v_{\chi}(M_{\hat{r}})$, where the valuation of $M_{\hat{r}}$ is taken
with respect to the base point $u$ and the element $t \in \Gamma$. 
\end{enumerate}
\end{theorem}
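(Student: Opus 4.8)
\emph{Proof proposal.} The statement transcribes Renz's criterion (\cite{Renz}, Theorem~3), so my plan is to reconstruct his argument; I describe its shape and the point where the work concentrates. Write $Y=Cay(\Gamma;\langle\mathcal X\mid\mathcal R\rangle)$ for the Cayley $2$-complex, $N$ for the normal closure of $\mathcal R$ in the free group on $\mathcal X$, and $Y_\chi$ for the subcomplex spanned by $\Gamma_\chi$. By definition $[\chi]\in\Sigma^2(\Gamma)$ exactly when $Y_\chi$ is simply connected for a suitable finite presentation (one first checks the presentation at hand is suitable); since $[\chi]\in\Sigma^1(\Gamma)$ already makes the $1$-skeleton of $Y_\chi$ connected, what remains is to fill edge-loops of $Y_\chi$ by disks inside $Y_\chi$, and, translating basepoints to $1$, this is the assertion that every $w\in N$ bounds a van Kampen diagram based at $1$ of $\chi$-valuation $\ge v_\chi(w)$ (for the ``only if'' half one uses instead the equivalent essential-simple-connectivity description of $\Sigma^2$ from \cite{BieriRenz}, \cite{RenzThesis}).

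The engine is a valuation-raising \emph{lift} of diagrams manufactured from the $\Sigma^1$-data. Extend the assignment $x\mapsto w_x$ to words by $w=x_1\cdots x_n\mapsto\hat w=w_{x_1}\cdots w_{x_n}$; this represents $t^{-1}wt$, and the elementary estimate underlying Renz's $\Sigma^1$ criterion gives $\chi(t)+v_\chi(\hat w)>v_\chi(w)$. The hypothesis that $\mathcal R$ contains a cyclic conjugate of every $t^{-1}xtw_x^{-1}$ is exactly what permits tiling, for any word $v$, a \emph{collar} — a simple diagram with outer boundary reading $t\hat v t^{-1}$, inner boundary reading $v$, and faces only among these special relators — whose $\chi$-valuation, by the same estimate, exceeds $v_\chi(v)$. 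Given a van Kampen diagram $D$ with boundary word $w$, one then forms $\widehat D$ by fitting into such a collar a $t$-shifted copy of $D$ in which every interior face labelled by some $r\in\mathcal R$ has been replaced by the prescribed simple diagram $M_{\hat r}$; this makes sense because $M_{\hat r}$ has boundary $\hat r$ (which is what the collar-relabelling calls for) and because the ``basepoint mapping to $t$'' convention in which $v_\chi(M_{\hat r})$ is measured is the one matching conjugation by $t$. Condition $(2)$ of the statement, $v_\chi(r)<v_\chi(M_{\hat r})$, is precisely the input guaranteeing that no $M_{\hat r}$ introduces a vertex spoiling the bound, so that $v_\chi(\widehat D)>v_\chi(D)$ while the boundary loop has been replaced by $t\hat w t^{-1}$.

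For sufficiency, assuming all the $M_{\hat r}$ exist, I would fill an arbitrary $w\in N$ by induction on $-v_\chi(w)$: pass from $w$ to $t\hat w t^{-1}$ of strictly larger valuation via one collar, fill the latter above its own valuation by the inductive hypothesis, glue the collar back — which, having valuation $\ge v_\chi(w)$, keeps the whole diagram of valuation $\ge v_\chi(w)$ — and dispose of the (mild) base case; this makes $Y_\chi$ simply connected, i.e.\ $[\chi]\in\Sigma^2(\Gamma)$. For necessity, from $[\chi]\in\Sigma^2(\Gamma)$ the sublevel filtration of $Y$ is essentially simply connected, and running the same machine in reverse — lift the loop $\hat r$ to a high enough sublevel, fill it there, and translate the filling down so its basepoint maps to $t$ — produces for each $r$ a simple diagram with boundary $\hat r$ and valuation (w.r.t.\ that basepoint) exceeding $v_\chi(r)$, i.e.\ a valid $M_{\hat r}$.

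The step I expect to be the real obstacle is making the lift $D\mapsto\widehat D$ genuinely work as a combinatorial operation: one must verify at once that $\widehat D$ is still simply connected (the collar has to close up combinatorially, which is exactly where it helps to allow simple diagrams with trivial faces $tt^{-1}t^{-1}t$ instead of honest van Kampen diagrams), that its boundary reads precisely $t\hat w t^{-1}$, and that the valuation accounting yielding $v_\chi(\widehat D)>v_\chi(D)$ is uniform enough both to drive the induction and, in the converse direction, to land the level of the extracted $M_{\hat r}$ strictly above $v_\chi(r)$ rather than below it. This diagram bookkeeping is where \cite{Renz} puts in the real work.
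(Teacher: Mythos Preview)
The paper does not prove this statement: Theorem~\ref{Renz2} is quoted from Renz's article \cite{Renz} (Theorem~3 there) and used as a black box, so there is no ``paper's own proof'' to compare your proposal against. Your sketch is a reasonable outline of how Renz's original argument runs --- the collar construction from the relators $t^{-1}xtw_x^{-1}$, the valuation-raising lift $D\mapsto\widehat D$, and the induction on valuation for sufficiency --- and you correctly flag the diagram bookkeeping (closing up the collar, allowing trivial faces) as the place where the genuine work sits. If you intend to include a proof rather than a citation, you would need to actually carry out that bookkeeping; as written this is a proof plan, not a proof.
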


Now, recall that a wreath product $H \wr_X G$ is finitely presented if and only if $G$ and $H$ are finitely presented, $G$ acts diagonally on $X^2$ with finitely
many orbits and the stabilizers of the $G$-action on $X$ are finitely generated. This is the result by Cornulier \cite{Cornulier}.

We will apply Theorem \ref{Renz2} to show that if $\Gamma = H \wr_X G$ is finitely presented and if $\chi: \Gamma \to \mathbb{R}$ 
is a character such that $\chi |_{H_{x_1}} \neq 0$ for some $x_1 \in X$ with $|G \cdot x_1| = \infty$, then $[\chi] \in \Sigma^2(\Gamma)$.

We start by assuming that $G$ acts transitively on $X$, with $X = G \cdot x_1$. Let $\langle Y | R \rangle$ and $\langle Z | S \rangle$ be finite presentations
for $H$ and $G$, respectively. We may assume that $Z$ contains a generating set $E$ for the stabilizer subgroup $G_{x_1}$ and a set $J$ of representatives
for the non-trivial double cosets of $(G_{x_1},G_{x_1})$ in $G$, since both $E$ and $J$ can be taken to be finite by the proof of the main theorem in \cite{Cornulier}.

We think of $\Gamma = H \wr_X G$ with the presentation considered by Cornulier. So $\Gamma$ is generated by the set $Y \cup Z$, subject 
to the following defining relations:
 \begin{enumerate}
  \item[(1)] $r$, for all $r \in R$ (defining relations for $H$);
  \item[(2)] $s$, for all $s \in S$ (defining relations for $G$);
  \item[(3)] $[^{g}y_1, y_2]$, for $g \in J$, $y_1,y_2 \in Y$;
  \item[(4)] $[e, y]$, for $e \in E$ and $y \in Y$.
  \end{enumerate} 
Let us adapt a bit this presentation. We are under the hypothesis that $\chi |_{H_{x_1}} \neq 0$ and $|G \cdot x_1 | = \infty$. 
We may assume without loss of generality that $\chi(h) > 0$ for some $h \in Y$. Choose $g_i \in Z$, for $1 \leq i \leq 5$, such that 
$\{x_1\} \cup \{g_i \cdot x_1 \hbox{ }|\hbox{ } 1 \leq i \leq 5\}$ is a set with exactly six elements (of course we may assume that $Z$ contains elements
$g_i$ with this property). Define
\[t_i := {^{g_i}h},\]
for $i=1, \ldots, 5$. 
Then $\Gamma$ is generated by $Y \cup Z \cup \{t_i \hbox{ }|\hbox{ } 1 \leq i \leq 5\}$, subject to the following defining relations:
 \begin{enumerate}
  \item[(1)] $r$, for all $r \in R$ (defining relations for $H$);
  \item[(2)] $s$, for all $s \in S$ (defining relations for $G$);
  \item[(3)] $[^{g}y_1, {^{g'}}y_2]$, for all $y_1,y_2 \in Y \cup \{t_i \hbox{ }|\hbox{ } 1 \leq i \leq 5\}$ and $g,g' \in Z \cup \{1\}$ whenever the commutator
             $[^{g}y_1, {^{g'}}y_2]$ is indeed a relation in $\Gamma$;
  \item[(4)] $[e, y]$, for $e \in E$ and $y \in Y$ and  \\
             $[z,t_1]$, for $z \in Z \cap G_{g_1 \cdot x_1}$;
  \item[(5)] $g_i h g_i^{-1} t_i^{-1}$, for $1 \leq i \leq 5$.
 \end{enumerate}

\begin{rem}
We could write the conditions of item $(3)$ in a more precise way, but it would require writing many cases. If $y_1 \in Y$ and $y_2 = t_1$, for example, then
$[^{g}y_1, {^{g'}}y_2]$ is a defining relation if $g \cdot x_1 \neq (g'g_1) \cdot x_1$.
\end{rem}
 
Note that we have added a few relations of the types (3) and (4), but clearly they are consequences of the others. Furthermore, the set of relations is clearly
still finite. 

Set $t = t_1$. We will continue using the notation of Proposition \ref{sigma1viarenz}. Thus for $y \in Y^{\pm 1}$ we have chosen 
$w_y = y$. If $z \in Z^{\pm 1}$, then $w_z = z$ if $z \in G_{g_1 \cdot x_1}$ and $w_z = ztz^{-1}t^{-1}z$ otherwise. Moreover, since $t_i$ and $t$ 
commute in $\Gamma$ for all $1 \leq i \leq 5$, we can define $w_{t_i}: = t_i$ and $w_{t_i^{-1}}: = t_i^{-1}$. 

Let us check that the chosen presentation satisfies the conditions of Theorem \ref{Renz2}. First, the set of defining relations contains the relations 
$t^{-1} x t w_x^{-1}$. Indeed if $y \in Y^{\pm 1} \cup \{t_i \hbox{ }|\hbox{ } 1 < i \leq 5\}^{\pm 1}$ then $t^{-1} y t w_y^{-1}$ is a relation
of type $(3)$, since $w_y = y$. If $z \in Z^{\pm 1} \cap G_{g_1 \cdot x_1}$, then $w_z = z$ and $t^{-1} z t z^{-1}$ is a relation of type (4). 
Finally, if $z \in Z^{\pm 1}$ but $z \notin G_{g_1 \cdot x_1}$, then $w_z = ztz^{-1}t^{-1}z$ and
\[t^{-1}zt w_z^{-1} = t^{-1}zt z^{-1} t z t^{-1} z^{-1} = t^{-1}(^{z}t)t(^{z}t)^{-1},\]
which is a cyclic permutation of $[^{z}t,t]$, a relation of type (3).

 According to Theorem \ref{Renz2}, now we need to apply the transformation $r \mapsto \hat{r}$ to each defining relation and then find a simple diagram 
 $M_{\hat{r}}$ satisfying the stated conditions. The following subsections are devoted to the verification of the existence of these diagrams.
 Observe that we do not need to consider the inverses of defining relations, since any simple diagram for $\hat{r}$ 
is a simple diagram for the inverse of $\hat{r}$ if we read its boundary backwards.

 \begin{subsection}{Relations of type (1)}

 Note that the relations of type $(1)$ involve only generators in $Y^{\pm 1}$. But $w_y = y$ 
 for all $y \in Y^{\pm 1}$, so $\hat{r} = r$ whenever $r$ is a relation of type $(1)$. Thus the one-faced diagram $M$ that represents the
 relation $r$, with base point corresponding to $t$, is already a choice for $M_{\hat{r}}$, since its $\chi$-value is increased by $\chi(t) > 0$.
 
 In Figure \ref{fig1} we represent the diagram $M_{\hat{r}}$, for $r= \hat{r} =  y_1 y_2 y_3 y_4$, as the internal square of the figure. The 
 external boundary represents the path beginning at the base point $1 \in \Gamma$ and with label the original relation $r$. The edges labeled by
 $t$ indicate that $\hat{r}$ is obtained from $r$ by conjugation by $t$.
 
 \begin{figure}
\[ \xymatrix{ 1 \ar[rrr]^{y_1}  \ar[dr]^t &                  &          & \bullet \ar[ddd]^{y_2} \ar[ld]_t \\
                                         & \bullet \ar[r]^{y_1} & \bullet \ar[d]^{y_2} &                   \\
                                         & \bullet \ar[u]^{y_4}        & \bullet  \ar[l]^{y_3}      &                   \\
             \bullet \ar[uuu]^{y_4} \ar[ur]^t         &                  &          & \bullet \ar[lu]_t \ar[lll]^{y_3}          }
             \]
\caption{Diagram for a relation of type (1), $r=\hat{r} = y_1 y_2 y_3 y_4$}
\label{fig1}             
\end{figure}
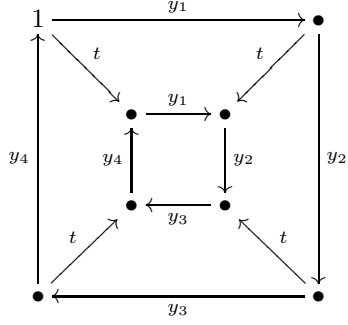

 \end{subsection}
 
 \begin{subsection}{Relations of type (2)}
 Since $\chi(h) \neq 0$, the order of $h$ in $\Gamma$ is infinite. Consider the subgroup
  \[\Gamma_0 := \langle h, G \rangle \leqslant \Gamma.\]
 Notice $\Gamma_0 \simeq \mathbb{Z} \wr_X G$. Let $\chi_0$ be the restriction of $\chi$ to $\Gamma_0$. The group $\Gamma_0$ is an extension of an abelian 
 group $A = \oplus_{x \in X} \mathbb{Z}$ by $G$, so it follows from Theorem \ref{ThmKochloukova}
 that $[\chi_0] \in \Sigma^2(\Gamma)$, as $\chi_0 |_A \neq 0$. Now choose a presentation for $\Gamma_0$ that is
 compatible with the chosen presentation for $\Gamma$:
 write the same presentation with $Y = \{h\}$ and discard the relations of type $(1)$. 
 Naturally, this presentation satisfies the hypothesis of Theorem \ref{Renz2}.
 
 Let $r = z_1 \cdots z_n$ be a relation of type $(2)$. We can see $r$ as a relation in $\Gamma_0$. By Theorem \ref{Renz2} there is a simple diagram 
 $M_{\hat{r}}$, with respect to the presentation of $\Gamma_0$, whose base point corresponds to $t$ and such that $\partial M_{\hat{r}} = \hat{r}$ and $v_{\chi}(r) < v_{\chi}(M_{\hat{r}})$.
 But all the relations in the chosen presentation of $\Gamma_0$ are also relations in the original presentation of $\Gamma$, after identifying the generating sets.
 Then $M_{\hat{r}}$, if seen as a diagram over the presentation of $\Gamma$, is the diagram we wanted. 
 \end{subsection}

 \begin{subsection}{Relations of the types (4) or (5)}
 All cases are similar: we can obtain simple diagrams whose only vertices are those of the boundary, that is, those that are defined by the word $\hat{r}$.
 In this case, the diagram automatically satisfies the hypothesis about its $\chi$-value, exactly as in the case of the relations of type $(1)$. 
 See the diagram for the relation $g_1 h g_1^{-1} t^{-1}$ in Figure \ref{fig2}. As before, the external boundary represents
 $r = g_1 h g_1^{-1} t^{-1}$, and the diagram $M_{\hat{r}}$ itself is the internal diagram composed by the five squares. Again, the edges labeled
 by $t$ and with origin at some point in the external path indicate conjugation by $t$ and represent the growth of $v_{\chi}$ from $r$ to $\hat{r}$.
 
 \begin{figure}
\[
 \xymatrix{ 1 \ar@{>}[rrrrrrr]^{g_1} \ar[ddd]_t \ar[dr]^t &  &  &  &   &  &   & \bullet \ar[ddd]^h \ar[ld]_t \\
 & \bullet \ar[r]^{g_1} \ar[d]_{t} & \bullet \ar[r]^{t} \ar[d]_h & \bullet  \ar[d]_h & \bullet \ar[l]_{g_1}  \ar[d]^t & \bullet  \ar[l]_{t} \ar[r]^{g_1} \ar[d]^t & \bullet \ar[d]^h  &  \\
 & \bullet \ar[r]_{g_1}  & \bullet \ar[r]_{t} & \bullet & \bullet \ar[l]^{g_1}  & \bullet  \ar[l]^{t} \ar[r]_{g_1}  & \bullet  &  \\
   \bullet \ar[ur]_t \ar[rrrrrrr]_{g_1} &  &  &  &   &  &   & \bullet  \ar[ul]^t 
   }
\]
\caption{Diagram for relations of type (5)}
\label{fig2}
\end{figure}
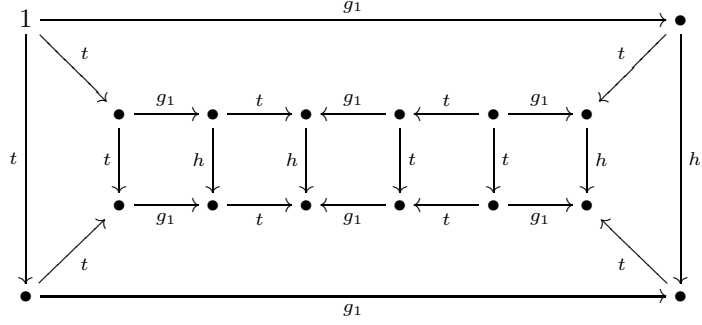

 Figure \ref{fig2} is an illustration of the case when $g=g_1 \notin G_{g_1 \cdot x}$, when the word $w_g$ is more complicated. If the letter representing
 an element of $G$ is an element of $G_{g_1 \cdot x}$, the argument is simpler: all the letters involved in the 
 relation commute with $t$, so $r = \hat{r}$ and the argument follows as in the case of the relations of type $(1)$.
  \end{subsection}
 
 \begin{subsection}{Relations of type (3)}
 Let $y \in Y \cup \{t_i \hbox{ }| \hbox{ } 1 \leq i \leq 5\}$ and $g \in Z$. Let $\eta_{g,y}$ be the word obtained from $g y g^{-1}$ by applying the transformation
 that takes each letter $\alpha$ to $w_{\alpha}$:
 \begin{equation} \label{decomp1}  
 \eta_{g,y} = (^{g}t) t^{-1} (^{g}y) t (^{g}t)^{-1},
 \end{equation}
 if $g \notin G_{g_1 \cdot x_1}$, or
 \begin{equation}  \label{decomp2}
 \eta_{g,y} = {^{g}y}, 
 \end{equation}
 if $g \in G_{g_1 \cdot x_1}$. If $g=1$, put $\eta_{1,y} := y$. In all cases we see that $\eta_{g,y}$ is a product of subwords representing elements of at most $3$ copies of $H$ in $\Gamma$. 
 Indeed, $^{g}t$ and $(^{g}t)^{-1}$ are elements of $H_{gg_1 \cdot x_1}$, while $t$ and $t^{-1}$ are elements of $H_{g_1 \cdot x_1}$ and, finally,
 $^{g}y$ is an element of $H_{g \cdot x_1}$ or $H_{gg_i \cdot x_1}$ for some $1 \leq i \leq 5$,
 depending on $y$.  
 
 Consider $r = [^{g}y_1, {^{g'}y_2}]$, a relation of type $(3)$. The word $\hat{r} = [\eta_{g,y_1}, \eta_{g',y_2}]$ is a relation in $\Gamma$,
 so we can always find a simple diagram $M_1$ with some base point corresponding to $t$ and such that $\partial M_1 =  \hat{r}$. If $v_{\chi}(M_1) > v_{\chi}(r)$ we are done.
 Otherwise there is a vertex $p$ of $M_1$ such that $\chi(p) \leq v_{\chi}(r)$. Notice that this vertex can not lie on the boundary of $M_1$, since
 $v_{\chi}(\hat{r}) > v_{\chi}(t^{-1}rt)$.
 
 Now, the commutator between the words $\eta_{g,y_1}$ and $\eta_{g',y_2}$ is a product of elements of the form
 $^{z}y$, with $z \in Z \cup \{1\}$ and $y \in Y^{\pm 1} \cup \{t_1, \ldots, t_5\}^{\pm 1}$. By the remarks above, these elements lie in at most
 most five different copies of $H$, one of them being indexed by $g_1 \cdot x_1$ when five copies do pop up. It follows that for some $u \in \{h,t_2, t_3,t_4, t_5\}$,
 the words $[^{z}y, u]$ are defining relations for all the subwords $^{z}y$ appearing in $\hat{r} = [\eta_{g,y_1}, \eta_{g',y_2}]$ (we consider the subwords
 $^{z}y$ that appear when $\eta_{g,y_1}$ and 
 $\eta_{g',y_2}$ are written exactly as in \eqref{decomp1} or \eqref{decomp2}).
 Observe that $\chi(u) = \chi(h)>0$ in all cases. So we can build a diagram $M_2$ by surrounding 
 $M_1$ with faces representing the commutators $[^{z}y, u]$, for all these subwords $^{z}y$.

 Clearly the boundary of $M_2$ is also labeled by $\hat{r}$. If we set as base point the vertex on the new boundary corresponding to the base point of $M_1$ (that is, the 
 one joined to it by an edge with label $u$), 
 then the $\chi$-value of the interior points (including $p$) is increased by $\chi(u)>0$, so that $v_{\chi}(M_2) > v_{\chi}(M_1)$. Repeating 
 this process finitely many times, we obtain a simple diagram $M_n$ satisfying the conditions of the theorem.
  \end{subsection} 
  
 We record what we have proved in the following proposition.
 \begin{prop} \label{propsigma2}
  Let $\Gamma = H \wr_X G$ be a finitely presented wreath product and let $M = \oplus_{x \in X} H_x \subseteq \Gamma$. Suppose that $G$ acts transitively on the 
  infinite set $X$. If $\chi: \Gamma \to \mathbb{R}$ is a character with $\chi |_M \neq 0$, then $[\chi] \in \Sigma^2(\Gamma)$.
 \end{prop}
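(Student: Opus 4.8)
The plan is to verify Renz's criterion for $\Sigma^2$ (Theorem \ref{Renz2}) for a presentation of $\Gamma$ tailored to $\chi$. Since $X$ is infinite and transitive we write $X = G\cdot x_1$ with $G\cdot x_1 \neq \{x_1\}$; since $\chi|_M \neq 0$ we may, after rescaling $\chi$, assume $\chi(h) > 0$ for some generator $h$ of $H_{x_1}$. Then $[\chi]\in\Sigma^1(\Gamma)$ already by Proposition \ref{sigma1viarenz}, so Renz's $\Sigma^2$-machinery applies with a witnessing letter $t$. To get a presentation compatible with that witness I would start from Cornulier's finite presentation (generators $Y\cup Z$, relations of types (1)--(4)) and enlarge it: because $X$ is infinite we can choose $g_1,\dots,g_5\in Z$ so that $x_1, g_1\cdot x_1,\dots,g_5\cdot x_1$ are six distinct points, adjoin generators $t_i := {}^{g_i}h$ (each lying in the copy $H_{g_i\cdot x_1}$) together with relations $g_i h g_i^{-1} t_i^{-1}$, and set $t:=t_1$. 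With the choices $w_y=y$ for $y\in Y^{\pm1}$, $w_z=z$ or $z t z^{-1} t^{-1} z$ for $z\in Z^{\pm1}$ (according to whether $z$ fixes $g_1\cdot x_1$), and $w_{t_i^{\pm1}}=t_i^{\pm1}$, one checks directly that every word $t^{-1}xtw_x^{-1}$ is a cyclic permutation of a defining relation, so the hypothesis of Theorem \ref{Renz2} is met.

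It then remains, for each defining relation $r$, to produce a simple diagram $M_{\hat r}$ with boundary $\hat r$ based at a vertex corresponding to $t$ and with $v_\chi(r) < v_\chi(M_{\hat r})$; I would organise this by the type of $r$. For relations of type (1) (the relators of $H$) we have $\hat r = r$ since $w_y=y$, so the single $2$-cell bounding $r$, re-based so that its initial vertex corresponds to $t$, already works, all its vertices being raised by $\chi(t)>0$; the same "flat diagram" idea handles types (4) and (5), where $\hat r$ bounds a diagram all of whose vertices lie on the boundary. For relations of type (2) (the relators of $G$) I would pass to the subgroup $\Gamma_0 = \langle h, G\rangle \cong \mathbb{Z}\wr_X G$, an extension of the abelian group $\bigoplus_X\mathbb{Z}$ by $G$; by Theorem \ref{ThmKochloukova} the induced character lies in $\Sigma^2(\Gamma_0)$, and applying Theorem \ref{Renz2} inside $\Gamma_0$ (whose relevant presentation is a sub-presentation of the one for $\Gamma$) furnishes a diagram which can then be read over the presentation of $\Gamma$.

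The genuine work is in the commutator relations of type (3), $r=[{}^{g}y_1,{}^{g'}y_2]$. Here $\hat r$, obtained by replacing each letter by its $w$-word, is still a relation, so a simple diagram $M_1$ with boundary $\hat r$ exists, but its interior vertices may have small $\chi$-value; if some interior vertex $p$ has $\chi(p)\le v_\chi(r)$ (the boundary vertices cannot, since passing from $t^{-1}rt$ to $\hat r$ strictly increases the valuation), I would inflate $M_1$. The key point is that the subwords of $\hat r$ have the form ${}^{z}y$ with $z\in Z\cup\{1\}$ and $y\in Y^{\pm1}\cup\{t_1,\dots,t_5\}^{\pm1}$, so they represent elements lying in at most five copies of $H$, one of them $H_{g_1\cdot x_1}$ when five occur; since $x_1, g_1\cdot x_1,\dots,g_5\cdot x_1$ are six distinct points, there is a generator $u\in\{h,t_2,t_3,t_4,t_5\}$ commuting with every such ${}^{z}y$, and $\chi(u)=\chi(h)>0$. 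Surrounding $M_1$ with the commutator $2$-cells $[{}^{z}y,u]$ along its boundary yields a simple diagram $M_2$ with the same boundary label and, suitably re-based, with every interior vertex (in particular $p$) raised by $\chi(u)$; iterating finitely often gives $M_n$ with $v_\chi(M_n)>v_\chi(r)$. Theorem \ref{Renz2} then gives $[\chi]\in\Sigma^2(\Gamma)$. I expect the type-(3) bookkeeping — identifying exactly which copies of $H$ the subwords of $\hat r$ land in and confirming that a single commuting generator $u$ always exists — to be the main obstacle, and this is precisely where the infinitude of $X$, hence the room to adjoin the five auxiliary generators $t_i$, is indispensable.
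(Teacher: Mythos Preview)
Your proposal is correct and follows essentially the same route as the paper: you set up the same enlarged Cornulier presentation with the five auxiliary generators $t_i={}^{g_i}h$, verify the hypothesis of Renz's $\Sigma^2$-criterion with $t=t_1$, and handle the four relation types exactly as the paper does (single-cell diagrams for type~(1), the subgroup $\Gamma_0\cong\mathbb{Z}\wr_X G$ together with Theorem~\ref{ThmKochloukova} for type~(2), boundary-only diagrams for types~(4)--(5), and the inflation-by-$u$ argument for type~(3)). Your identification of the type-(3) bookkeeping as the crux, and of the infinitude of $X$ as what makes the choice of six distinct points possible, matches the paper's emphasis precisely.
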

 
The arguments above essentially contain what we need when $G \cdot x$ is infinite for some $x \in X$ such that $\chi |_{H_x} \neq 0$ (but the 
$G$-action on $X$ is not necessarily transitive), so we will only indicate in the proof of the following theorem how to deal with this case. 

Recall that we denote by $T$ the set of elements $x \in X$ such that $\chi |_{H_x} \neq 0$. Notice that if $T= \{x_1\}$, 
then $\Gamma$ is a direct product
\[\Gamma \simeq H_{x_1} \times (H \wr_{X'} G),\]
where $X' = X \smallsetminus \{x_1\}$. Then the direct product formula and the results on the $\Sigma^1$-invariants of wreath products already contain all the information 
we need. The remaining cases are all part of the following theorem, which includes Theorem \ref{thmB}.
 
\begin{theorem} \label{sigma2viarenz}
   Let $\Gamma = H \wr_X G$ be a finitely presented wreath product and let $M = \oplus_{x \in X} H_x \subseteq \Gamma$.
   Suppose that the set
   \[ T = \{x \in X \hbox{ }| \hbox{ } \chi |_{H_x} \neq 0\}. \]
  has at least two elements. Then $[\chi] \in \Sigma^2(\Gamma)$ if and only if at least one of the following conditions holds:
   \begin{enumerate}
    \item[(1)] $[\chi |_{H_x}] \in \Sigma^1(H)$ for some $x \in T$;
    \item[(2)] $\chi|_G \neq 0$;
    \item[(3)] $T$ has at least three elements.
   \end{enumerate}
  \end{theorem}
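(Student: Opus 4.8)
The plan is to split according to whether some orbit meeting $T$ is infinite and, failing that, to pass to a finite-index subgroup that untangles the (finitely many) copies of $H$ detected by $\chi$. First I would record the structural fact that drives everything: since $\chi$ vanishes on $[\Gamma,\Gamma]\supseteq[G,M]$, the restriction $\chi|_M$ is $G$-invariant, so $ghg^{-1}$ and $h$ have the same $\chi$-value whenever $h\in H_x$, whence $T$ is a union of $G$-orbits. If some $x_0\in T$ lies in an infinite $G$-orbit, then $T$ is infinite, so $(3)$ holds, and one shows directly that $[\chi]\in\Sigma^2(\Gamma)$: the proof of Proposition \ref{propsigma2} goes through with only cosmetic changes. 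One uses the multi-orbit form of Cornulier's (still finite) presentation, places the generator $h$ with $\chi(h)>0$ inside $H_{x_0}$, picks $g_1,\dots,g_5\in G$ so that $x_0,g_1x_0,\dots,g_5x_0$ are six distinct points of $X$ (possible as $|G\cdot x_0|=\infty$), sets $t_i={}^{g_i}h$ and $t=t_1$, and feeds this into Renz's criterion (Theorem \ref{Renz2}): the type-$(1),(2),(4),(5)$ diagrams are verbatim, the type-$(2)$ case again uses Theorem \ref{ThmKochloukova} applied to the abelian-by-$G$ subgroup $\langle h,G\rangle$, and the \emph{surrounding} construction for type-$(3)$ relations works because the counting that guarantees a free element of $\{h,t_2,t_3,t_4,t_5\}$ never used transitivity. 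Thus from here on one may assume every orbit in $T$ is finite, and hence $T$ is finite.

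Now let $K=\ker(G\to\mathrm{Sym}(T))$, a finite-index normal subgroup of $G$, so $\Gamma_0:=M\rtimes K$ has finite index in $\Gamma$. Since $K$ fixes $T$ pointwise it acts trivially on $M_T:=\bigoplus_{x\in T}H_x$, and with $X'=X\smallsetminus T$ one obtains a genuine direct product $\Gamma_0\simeq M_T\times(H\wr_{X'}K)$ (read $H\wr_{X'}K:=K$ if $X'=\emptyset$). All of $\Gamma_0$, its two direct factors, and $H$ are finitely presented (Cornulier). Writing $\chi_0=\chi|_{\Gamma_0}$ and decomposing $\chi_0=(\chi_T,\chi_B)$ along $\Gamma_0=M_T\times B$ with $B:=H\wr_{X'}K$, Theorem \ref{finiteindexthm} reduces everything to deciding whether $[\chi_0]\in\Sigma^2(\Gamma_0)$, which I would then read off from the direct product formulas (Theorem \ref{GehrkeThm}).

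For necessity, assume $[\chi]\in\Sigma^2(\Gamma)$ and that $(2),(3)$ fail, i.e. $\chi|_G=0$ and $T=\{x_1,x_2\}$. Then $\chi_B=0$, because $B$ is generated by $K$ and the copies $H_{x'}$ ($x'\in X'$), all of which $\chi$ kills; the $\Sigma^2$ product formula then forces $[\chi_T]\in\Sigma^2(M_T)=\Sigma^2(H_{x_1}\times H_{x_2})$, and a second application (using $\Sigma^2(H)\subseteq\Sigma^1(H)$) yields $[\chi|_{H_{x_i}}]\in\Sigma^1(H)$ for some $i$, i.e. $(1)$. For sufficiency one observes that $|T|\ge 2$ already gives $[\chi_T]\in\Sigma^1(M_T)$ since $\chi_T$ has at least two nonzero coordinates; then in case $(1)$ or $(3)$ one upgrades this to $[\chi_T]\in\Sigma^2(M_T)$ by splitting off a single factor of $M_T$ and applying the $\Sigma^2$ product formula, while in case $(2)$ one notes $\chi_B=\chi|_{H\wr_{X'}K}\neq 0$ because $\chi|_K\neq 0$ (a nonzero character survives restriction to a finite-index subgroup). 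In every case the $\Sigma^2$ product formula gives $[\chi_0]\in\Sigma^2(\Gamma_0)$, hence $[\chi]\in\Sigma^2(\Gamma)$.

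The main obstacle I anticipate is the infinite-orbit case: one must verify that the delicate Renz-diagram bookkeeping of Proposition \ref{propsigma2} — especially the surrounding construction for type-$(3)$ relations, which relies on each relevant subword meeting at most five copies of $H$ and on one of $\{h,t_2,\dots,t_5\}$ avoiding them — still functions word for word once $G$ is no longer transitive and one works with the multi-orbit presentation. The other ingredients ($G$-invariance of $T$, the direct product decomposition of $\Gamma_0$, and the careful use of Theorem \ref{GehrkeThm} with a possibly-zero second coordinate) are routine but should be spelled out.
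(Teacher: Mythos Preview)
Your proposal is correct and follows essentially the same strategy as the paper: split according to whether $T$ contains an infinite $G$-orbit, handle the infinite case by adapting the Renz-diagram argument of Proposition \ref{propsigma2} to the multi-orbit presentation, and in the finite case pass to the finite-index subgroup $M\rtimes\bigcap_{x\in T}G_x$ (your $K=\ker(G\to\mathrm{Sym}(T))$ is exactly the paper's $B=\bigcap_{x\in T}G_x$, since $T$ is $G$-invariant) to obtain the direct product $\prod_{x\in T}H_x\times(H\wr_{X'}B)$ and finish with Theorem \ref{GehrkeThm}. The only cosmetic differences are that you treat the infinite-orbit case first and state the $G$-invariance of $T$ explicitly, and that the paper spells out in somewhat more detail the check that the type-(3) surrounding construction survives in the multi-orbit presentation --- precisely the point you flag as the main obstacle.
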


  \begin{proof}
   Suppose first that $T$ is a finite set and consider the subgroup $B = \cap_{x \in T} G_x \leqslant G$. It is of finite index in $G$, so $\Gamma_1 = H \wr_X B$ is of finite index in 
   $\Gamma$. Notice that \[\Gamma_1 \simeq (\prod_{x \in T} H_x) \times (H \wr_{X'} B).\] 
   Denote $P =\prod_{x \in T} H_x$ and $Q = H \wr_{X'} B$. The fact that $T$ has at least two elements implies that $[\chi |_P] \in \Sigma^1(P)$,
   by the direct product formula. By applying the formula again, now to the product $\Gamma_1 = P \times Q$, we get that 
   $[\chi |_{\Gamma_1}] \in \Sigma^2(\Gamma_1)$ if and only if $[\chi |_P] \in \Sigma^2(P)$ or $\chi |_Q \neq 0$. The former happens if and only
   if at least one of conditions (1) or (3) is satisfied (once again, by the direct product formula), while the latter clearly happens if and only if $\chi |_B \neq 0$, which in turn is equivalent with $\chi |_G \neq 0$, since
   $B$ is a subgroup of finite index. Finally, since the index of $\Gamma_1$ in $\Gamma$ is finite, we are done by Theorem \ref{finiteindexthm}. 
   
   We are left with the case where $T$ is infinite and we want to show that $[\chi] \in \Sigma^2(\Gamma)$. Since $G$ acts on $X$ with finitely many orbits, 
   there must be some $x_1 \in T$ such that $| G \cdot x_1 | = \infty$. We adapt the proof of Proposition \ref{propsigma2} putting the orbit of $x_1$
   in a distinguished position.
   
   Choose $x_2$, $\ldots$, $x_n \in X$ such that $X = \bigsqcup_{j=1}^n G \cdot x_j$. For each $j$ choose a finite generating set $E_j$ for the stabilizer subgroup
   $G_{x_j}$. For each pair par $(i,j)$, with $ 1 \leq i,j \leq n$, choose a finite set $J_{i,j}$ of representatives of the non-trivial double cosets of 
   $(G_{x_i}, G_{x_j})$ in $G$. Finally, choose finite presentations $\langle Y | R \rangle$ and 
   $\langle Z | S \rangle$ for $H$ and $G$ respectively. We may assume that $Z$ contains $E_j$ and $J_{i,j}$ for all $1 \leq i,j \leq n$.
   
   A finite presentation for $\Gamma$, adapted from the presentation given by Cornulier \cite{Cornulier}, can be given as follows. For each $1 \leq i \leq n$ 
   we associate a copy $\langle Y_i | R_i \rangle$ of the presentation for $H$ and, as before, we define $t_i := {^{g_i}h}$ for some $g_i \in Z$ and 
   $h \in Y_1$ with $\chi(h)> 0$ and $|\{x_1\} \cup \{g_i \cdot x_1 | 1 \leq i \leq 5\}| = 6$.
   We think of $\Gamma$ as generated by $(\bigcup_{i=1}^n Y_i) \cup Z \cup \{t_i \hbox{ } | \hbox{ } 1 \leq i \leq 5\}$ and subject to the 
   defining relations given by:
   
   \begin{enumerate}
  \item[(1)] $r$, for all $r \in \bigcup_{i=1}^n R_i$ (defining relations for the copies of $H$);
  \item[(2)] $s$, for all $s \in S$ (defining relations for $G$);
  \item[(3)] $[^{g}y_1, {{^{g'}}y_2}]$, for $y_1,y_2 \in (\bigcup_{i=1}^n Y_i) \cup \{t_i \hbox{ }|\hbox{ } 1 \leq i \leq 5\}$ and 
             $g,g' \in Z \cup \{1\}$ whenever $[^{g}y_1, {^{g'}}y_2]$ is indeed a relation in $\Gamma$;
  \item[(4)] $[e_i, y_i]$, for all $e_i \in E_i$, $y_i \in Y_i$ and $1 \leq i \leq n$ and \\ $[z,t_1],$ for all $z \in Z \cap G_{g_1 \cdot x_1}$;
  \item[(5)] $g_i h g_i^{-1} t_i^{-1}$, for $1 \leq i \leq 5$.
 \end{enumerate}

 Set $t=t_1$. We use again the notation of Proposition \ref{sigma1viarenz}. So we use the same words $w_z$ if $z \in Z^{\pm 1}$, and
 $w_y = y$ for all other generators $y$. It is clear that 
 the set of defining relations above still satisfies the hypothesis of Theorem \ref{Renz2}. The construction of the diagrams associated
 to each defining relation can be done exactly as in the case where the action is transitive, as we will argue below. The key fact 
 is that the generators coming from copies of $H$ associated to all other orbits of $G$ (other than $G \cdot x_1$) commute
 with $t=t_1$.
 
 First, notice that the construction of the diagrams associated to the relations of types (1) or (4) in the case of a transitive action depends only on the fact 
 that $[t,y]$ is a defining relation for all $y \in Y=Y_1$. 
 But $t_1$ commutes also with all elements of $Y_2 \cup \ldots \cup Y_n$, so the construction can be carried out in the same way. For the case
 of relations of type (3), it was only necessary that for any generators $g, g' \in Z \cup \{1\}$ and $y,y' \in Y_1$, we could find some 
 $u \in \{h,t_2, \ldots, t_5\}$ that commutes with all the following elements: $t$, $^{g}t$, $^{g'}t$, $^{g}y$ and $^{g'}y'$. If we allow $y$
 to be an element of $Y_2 \cup \ldots \cup Y_n$, then any $u$ that commutes with $t$, $^{g}t$, $^{g'}t$ and $^{g'}y'$ will do it, since $^{g}y$ commutes 
 any choice of $u$. Thus the five options for $u$, coming from different copies of $H$, are enough to let us repeat the argument. Similar considerations cover
 the cases where either only $y'$, or both $y$ and $y'$ are elements of $Y_2 \cup \ldots \cup Y_n$.
 
 This is all we needed to check, since relations of types (2) and (5) do not involve any of the new generators. 
  \end{proof}
  \end{section}

  \begin{section}{Some observations about \texorpdfstring{$\Sigma^2$}{Sigma2}} \label{obs}
  Let $\Gamma$ be a finitely presented group and let $[\chi] \in S(\Gamma)$. Let $\langle \mathcal{X} | \mathcal{R} \rangle$ be a finite presentation for $\Gamma$.
  Denote by $C = Cay(\Gamma; \langle \mathcal{X} | \mathcal{R} \rangle)$ the associated Cayley complex and by $C_{\chi}$ the full subcomplex of $C$ spanned by $\Gamma_{\chi}$.
  The canonical action of $\Gamma$ on $C$ restricts to an action by the monoid $\Gamma_{\chi}$ on $C_{\chi}$. 
  
  \begin{rem}
   If a monoid $K$ acts on some set $X$ we still say that the sets $K \cdot x$ are \textit{orbits}. By ``$K$ has finitely many orbits on
   $X$'' we mean that there are elements $x_1, \ldots, x_n \in X$ such that $X = \bigcup_{j=1}^n K \cdot x_j$.
   \end{rem}
   
  The following lemma can be found in Renz's thesis \cite{RenzThesis}.
  
  \begin{lemma} \label{lemarenz}
$C_{\chi}$ has finitely many $\Gamma_{\chi}$-orbits of $k$-cells for $k \leq 2$.
  \end{lemma}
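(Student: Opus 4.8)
The plan is to count orbits of cells dimension by dimension, using the fact that the full Cayley complex $C$ has finitely many $\Gamma$-orbits of $k$-cells for each $k$ (one orbit per generator in dimension $1$, one per relator in dimension $2$, and the single orbit of the base vertex in dimension $0$), and then showing that passing from the $\Gamma$-action on $C$ to the $\Gamma_\chi$-action on $C_\chi$ only multiplies the number of orbits by a finite factor.

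First I would fix a $\Gamma$-orbit $\Gamma \cdot \sigma$ of a $k$-cell $\sigma$ in $C$ (so $\sigma$ is the base vertex, an edge at the base vertex, or a $2$-cell at the base vertex, depending on $k$). The cells of $C_\chi$ lying in this orbit are exactly those $\gamma \cdot \sigma$ all of whose vertices lie in $\Gamma_\chi$. The key observation is that since $[\chi] \in \Sigma^1(\Gamma)$ — which we may assume, as otherwise $C_\chi$ is not even connected and the statement is typically applied in that regime; more robustly, one argues directly — the submonoid $\Gamma_\chi$ acts on $\Gamma_\chi$ with finitely many orbits in the appropriate sense. Concretely: choose a finite set $D \subseteq \Gamma$ large enough that every cell of the form $\gamma \cdot \sigma$ with all vertices in $\Gamma_\chi$ can be written as $\delta \cdot (d \cdot \sigma)$ with $\delta \in \Gamma_\chi$ and $d$ ranging over a fixed finite set. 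The natural candidate for $d$ is an element moving the "lowest" vertex of $\gamma \cdot \sigma$ (the one minimizing $\chi$) to a fixed representative; the point is that $\sigma$ has boundedly many vertices, so the $\chi$-values of the vertices of $d \cdot \sigma$ lie in a bounded window once the lowest one is normalized, and since $\chi(\Gamma)$ is a finitely generated subgroup of $\mathbb{R}$ there are only finitely many elements of that bounded set of $\chi$-values — but $d$ itself need not be finite, so instead I would normalize by an element of $\Gamma_\chi$ carrying the lowest vertex to the base vertex $1$ (using connectedness of $C_\chi$), reducing to the finitely many cells of $C$ incident to $1$ and among those, the finitely many whose vertices all have $\chi \geq 0$. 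That finite set serves as the orbit representatives.

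In more detail, the steps are: (1) recall $C$ has finitely many $\Gamma$-orbits of $k$-cells, $k \leq 2$, with representatives $\sigma_1, \dots, \sigma_m$, each incident to $1$; (2) given any $k$-cell $\tau$ of $C_\chi$, write $\tau = \gamma \cdot \sigma_i$ and let $v$ be a vertex of $\tau$ with $\chi(v)$ minimal among vertices of $\tau$; (3) use the fact that $C_\chi$ is connected (which holds since $[\chi] \in \Sigma^1(\Gamma)$) to find $\delta \in \Gamma_\chi$ with $\delta \cdot v = 1$, wait — more carefully, $v \in \Gamma_\chi$ and we want to move $v$ to $1$; the element $v^{-1}$ does this but may not lie in $\Gamma_\chi$. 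Here is where connectedness enters: a path in $C_\chi$ from $1$ to $v$ has all vertices with $\chi \geq 0$, so in particular $\chi(v) \geq 0$; but to normalize I act by $v^{-1}$ and observe $v^{-1} \cdot \tau$ is a cell incident to $1$, hence equals $d \cdot \sigma_i$ for one of finitely many $d \in \Gamma$ (those with $d \cdot \sigma_i \ni 1$, a finite set since $\sigma_i$ has finitely many vertices and $\Gamma$ acts freely on vertices); (4) the vertices of $v^{-1}\cdot\tau$ have $\chi$-values equal to $\chi(u) - \chi(v)$ as $u$ ranges over vertices of $\tau$, all of which are $\geq 0$ by minimality of $\chi(v)$ — so $v^{-1}\cdot\tau$ is a cell incident to $1$ all of whose vertices lie in $\Gamma_\chi$; there are finitely many such, giving representatives $\rho_1, \dots, \rho_N$; (5) then $\tau = v \cdot \rho_j$ with $v \in \Gamma_\chi$, so $\{\rho_1, \dots, \rho_N\}$ is a finite set of $\Gamma_\chi$-orbit representatives for the $k$-cells of $C_\chi$.

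The main obstacle is the normalization step (3)–(4): one must be careful that the normalizing element used to push a cell down to the base vertex can be taken inside the monoid $\Gamma_\chi$, or else argue — as above — that the \emph{inverse} normalization $v^{-1}$, while possibly outside $\Gamma_\chi$, still lands the cell in a controlled finite family and that $v$ itself (the element recording which orbit we are in) does lie in $\Gamma_\chi$. The role of $[\chi] \in \Sigma^1(\Gamma)$ is precisely to guarantee $C_\chi$ is connected, which is implicitly what makes "$\Gamma_\chi$-orbit" a meaningful finite-index-type condition; in Renz's setting this hypothesis is in force whenever the lemma is applied, and I would state it as a standing assumption at the point of use. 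Everything else is bookkeeping with the finitely generated value group $\chi(\Gamma) \leq \mathbb{R}$ and the local finiteness of the Cayley complex.
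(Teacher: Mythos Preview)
Your core argument in steps (2)--(5) is correct and is essentially the paper's proof: translate each cell $\tau$ of $C_\chi$ by the inverse of its lowest vertex $v$, so that $v^{-1}\tau$ is a cell incident to $1$ with all vertices in $\Gamma_\chi$; there are only finitely many such cells, and $\tau = v\cdot(v^{-1}\tau)$ with $v\in\Gamma_\chi$. The paper organizes this slightly differently (it fixes one such normalized representative per $\Gamma$-orbit at the outset), but the idea is identical.

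The one point to clean up is your repeated invocation of the hypothesis $[\chi]\in\Sigma^1(\Gamma)$. It is not needed, the paper does not assume it, and your own argument in steps (4)--(5) does not use it: the vertex $v$ lies in $\Gamma_\chi$ simply because it is a vertex of a cell of $C_\chi$, and that alone gives $\tau = v\cdot\rho_j$ with $v\in\Gamma_\chi$. Connectedness of $C_\chi$ plays no role, nor does any bookkeeping with the value group $\chi(\Gamma)\leq\mathbb{R}$. Remove those remarks and the proof is exactly the paper's.
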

 
  \begin{proof}
 Denote by $D$ and $D_{\chi}$ the sets of $k$-cells of $C$ and $C_{\chi}$, respectively (for a fixed $k \leq 2$). We know that $\Gamma$ acts on $D$ 
 with finitely many orbits. Choose representatives $d_1, \ldots, d_n$ for these orbits so that $d_j \in D_{\chi}$ but
 $\gamma \cdot d_j \notin D_{\chi}$ for all $j$ and for all $\gamma \in \Gamma$ with $\chi(\gamma)< 0$. For this it suffices to take any representatives 
 $\tilde{d_1}, \ldots, \tilde{d_n}$ and then put $d_j := \gamma_j^{-1} \cdot \tilde{d_j}$, where $\gamma_j \in \Gamma$ is the vertex of $\tilde{d_j}$ with
 lowest $\chi$-value. Thus if $d \in D_{\chi}$,
 then $d = \gamma \cdot d_j$ for some $j$ and, by choice of $d_j$, we have that $\chi(\gamma) \geq 0$. So $D_{\chi} = \bigcup_{j=1}^n \Gamma_{\chi} \cdot d_j$.
  \end{proof}

 Denote by $F(\mathcal{X}, \chi)$ the submonoid of $F(\mathcal{X})$ consisting of the classes of reduced words $w$ with $v_{\chi}(w) \geq 0$. 
 Note that $F(\mathcal{X}, \chi)$ is indeed closed under the product, since $w_1, w_2 \in F(\mathcal{X}, \chi)$ implies $v_{\chi}(w_1 w_2) \geq 0$,
 and this property is preserved by elementary reductions (that is, canceling out terms of the form $x x^{-1}$ or $x^{-1}x$). 
 Let $R(\chi)$ be the subgroup of $F(\mathcal{X})$ consisting of the classes of reduced words $w$ that represent relations (that is 
 $w \in \langle \mathcal{R} \rangle^{F(\mathcal{X})}$) and such that $v_{\chi}(w) \geq 0$. Observe that $R(\chi) \subseteq F(\mathcal{X}, \chi)$ 
 and notice that $R(\chi)$ is indeed a subgroup, since $v_{\chi}(w) \geq 0$ implies $v_{\chi}(w^{-1}) \geq 0$ whenever $w$ is a relation.
 Finally, observe that $R(\chi)$ admits an action by the monoid $F(\mathcal{X}, \chi)$ via left conjugation.

 Now, let $r$ be a reduced word in $\mathcal{X}^{\pm 1}$ representing a relation in $\Gamma$, that is, 
 $r \in \langle \mathcal{R} \rangle^{F(\mathcal{X})}$. Suppose that $M$ is a van Kampen diagram over 
 $\langle \mathcal{X} | \mathcal{R} \rangle$ whose boundary, read in some orientation from some base point $p$, is exactly $r$. Then it holds in $F(\mathcal{X})$ that
  \begin{equation} \label{relacao}
r = {^{w_1}r_1} \cdots {^{w_n} r_n},  
 \end{equation}
 where each $r_i$ is a word read on the boundary of some face of $M$ and $w_i$ is the label for a path in $M$ connecting $p$ to a base point
 of the face associated to $r_i$. Both the facts that such a diagram exists and that $r$ can be written as above are consequences of 
 van Kampen's lemma (see Proposition 4.1.2 and Theorem 4.2.2 in \cite{Bridson}, for instance). 
 
 \begin{lemma} \label{lemaRfg}
 If $\chi: \Gamma \to \mathbb{R}$ is a character such that $C_{\chi} = Cay(G, \langle \mathcal{X} | \mathcal{R} \rangle)_{\chi}$ is $1$-connected, 
 then $R(\chi)$ is finitely generated over $F(\mathcal{X}, \chi)$.
 \end{lemma}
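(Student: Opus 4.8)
The plan is to produce an explicit finite generating set. Applying Lemma \ref{lemarenz} with $k=2$, and using the way the orbit representatives are built in its proof, I fix finitely many $2$-cells $e_1,\dots,e_m$ of $C_\chi$ such that every $2$-cell of $C_\chi$ is of the form $\gamma\cdot e_k$ with $\gamma\in\Gamma_\chi$, and such that moreover the vertex of $e_k$ lying at $1\in\Gamma$ realizes the minimum of $\chi$ over the vertices of $e_k$. Reading the attaching loop of $e_k$ from that vertex, in each of the two orientations, gives reduced words $\rho_k$ and $\bar\rho_k$ that represent relations in $\Gamma$ and, since every vertex of $e_k$ has $\chi$-value $\geq 0$, satisfy $v_\chi(\rho_k)\geq 0$ and $v_\chi(\bar\rho_k)\geq 0$; hence $\rho_1,\bar\rho_1,\dots,\rho_m,\bar\rho_m\in R(\chi)$. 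I claim these $2m$ elements generate $R(\chi)$ over $F(\mathcal{X},\chi)$.

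Let $w\in R(\chi)$. The edge-path based at $1\in\Gamma$ with label $w$ is a loop, and because $v_\chi(w)\geq 0$ all its vertices lie in $\Gamma_\chi$, so it is a loop in $C_\chi$. Since $C_\chi$ is $1$-connected, this loop bounds a van Kampen diagram $D$ over $\langle\mathcal{X}\mid\mathcal{R}\rangle$ which can be taken to lie inside $C_\chi$, i.e.\ all vertices of $D$ map into $\Gamma_\chi$ and every face of $D$ maps onto a $2$-cell of $C_\chi$. Applying van Kampen's lemma exactly as in \eqref{relacao}, we obtain $w={}^{w_1}r_1\cdots{}^{w_n}r_n$ in $F(\mathcal{X})$, where $r_i$ is the boundary word of the $i$-th face of $D$ read from a chosen base vertex of that face and $w_i$ is the label of a path in $D$ from the global base point to that base vertex. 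As $D$ maps into $C_\chi$, we get $v_\chi(w_i)\geq 0$, so $w_i\in F(\mathcal{X},\chi)$.

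It then remains to rewrite each factor ${}^{w_i}r_i$ in terms of the $\rho_k$. The $i$-th face of $D$ maps onto some $2$-cell $\gamma_i\cdot e_{k_i}$ with $\gamma_i\in\Gamma_\chi$. Let $p_i$ be the label of the subpath of the boundary of that face running backwards from its base vertex to the vertex lying over $\gamma_i\cdot 1$. Then, comparing with the attaching loop of $e_{k_i}$ read from $1$, the word $p_i$ is an initial segment of $\rho_{k_i}$ (or of $\bar\rho_{k_i}$, according to orientation), and $r_i={}^{p_i^{-1}}\rho_{k_i}$ (resp.\ ${}^{p_i^{-1}}\bar\rho_{k_i}$). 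Hence ${}^{w_i}r_i={}^{w_i p_i^{-1}}\rho_{k_i}^{\pm1}$, and $w_i p_i^{-1}$ is the label of a path inside $D$ (follow $w_i$, then run backwards along the boundary of the $i$-th face), so $v_\chi(w_i p_i^{-1})\geq 0$, i.e.\ $w_i p_i^{-1}\in F(\mathcal{X},\chi)$. Therefore $w$ is a product of $F(\mathcal{X},\chi)$-conjugates of the elements $\rho_k^{\pm1},\bar\rho_k^{\pm1}$, which proves the lemma.

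I expect the step requiring the most care to be the last one: locating, inside the abstract diagram $D$, the precise initial segment $p_i$ of the fixed relator word $\rho_{k_i}$ that conjugates it to the boundary word $r_i$ of the face, and verifying that the resulting conjugator $w_i p_i^{-1}$ is again realized by an edge-path in $D$ (which is what makes its $\chi$-valuation non-negative). The only other subtlety is the first-paragraph assertion that the $\rho_k$ genuinely lie in $R(\chi)$, which is exactly why the orbit representatives must be chosen with minimal base vertex, as in the proof of Lemma \ref{lemarenz}.
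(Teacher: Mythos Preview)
Your argument is correct and follows essentially the same route as the paper's proof: both use the $1$-connectedness of $C_\chi$ to realize the loop labeled $w$ by a van Kampen diagram lying entirely in $C_\chi$ (the paper cites Renz, Theorem~2, for this), then apply the decomposition \eqref{relacao} and Lemma~\ref{lemarenz} to reduce to finitely many orbit representatives of $2$-cells. Your treatment of the rewriting step is in fact more explicit than the paper's: you identify the conjugator as the concrete path-label $w_i p_i^{-1}$ inside $D$, which immediately lies in $F(\mathcal{X},\chi)$, whereas the paper appeals more loosely to the fact that elements of $\Gamma_\chi$ are represented by words in $F(\mathcal{X},\chi)$.
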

 
 \begin{rem}  
By ``$R(\chi)$ is finitely generated over $F(\mathcal{X}, \chi)$'' we mean that every element of $R(\chi)$ can be written as a product of elements of the form
 $^{w}s$, where $w \in F(\mathcal{X}, \chi)$ and $s \in S$ for some finite set $S \subseteq R(\chi)$.
 \end{rem}
 
 \begin{proof}
  Let $r \in R(\chi)$ and consider the path $\rho$ in $C$ beginning at $1$ and with label $r$. Notice that this path runs
  inside $C_{\chi}$, since $v_{\chi}(r) \geq 0$. Also, $\rho$ is clearly a loop and it must be nullhomotopic in $C_{\chi}$, since $C_{\chi}$ is $1$-connected.
  A homotopy from $\rho$ to the trivial path can then be realized by a van Kampen diagram $M$ with $v_{\chi}(M) \geq 0$ 
  (the valuation is taken with respect to $1$, seen both as base point in $C$ and group element). This is made precise by Theorem 2 in \cite{Renz}.
  
  Write $r$ as in \eqref{relacao}. Thus $r$ is a product of relations corresponding to the faces
  of $M$ conjugated on the left by elements of $F(\mathcal{X},\chi)$. Since $v_{\chi}(M) \geq 0$, such faces are faces of $C_{\chi}$, so by Lemma \ref{lemarenz} and using 
  that every element of $\Gamma_{\chi}$ can be written as a word in $F(\mathcal{X}, \chi)$, each
  $^{w_j}r_j$ can be rewritten as $^{u_j}s_j$ where ${u_j} \in F(\mathcal{X}, \chi)$ and each $s_j$ is a word read on the boundary of a face in a finite set
  $S$ of representatives of $\Gamma_{\chi}$-orbits of faces of $C_{\chi}$. It follows that $S$ is a finite generating set for $R(\chi)$ modulo the action of
  $F(\mathcal{X}, \chi)$.
  \end{proof}
 \end{section}

 \begin{section}{\texorpdfstring{$\Sigma^2$}{Sigma2} for characters with \texorpdfstring{$\chi  |_M = 0$}{chi M = 0}}
 We get back to a finitely presented wreath product $\Gamma = H \wr_X G = M \rtimes G$. 
 We consider now the non-zero characters 
 $\chi: \Gamma \to \mathbb{R}$ such that $\chi |_M = 0$. 
  
 In order to find sufficient conditions for $[\chi] \in \Sigma^2(\Gamma)$, we consider a nice action of $\Gamma$ on a complex. 
 We will briefly describe the construction in the proof of Theorem A in \cite{KrophollerMartino}, with the simplifications allowed by the fact
 that our situation is less general than what is considered in that paper.
 
 We are assuming that $\Gamma= H \wr_X G$ is finitely presented, so $H$ is also finitely presented. Choose a $K(H,1)$-complex $Y$, 
 with base point $\ast$, having a single $0$-cell and finitely many $1$-cells and $2$-cells. Let $Z = \oplus_{x \in X} Y_x$ be the 
 \textit{finitary product} of copies of $Y$ indexed by $X$, that is, $Z$ is the subset of the cartesian product $\prod_{x \in X} Y_x$ consisting
 on the families $(y_x)_{x \in X}$ such that $y_x$ is not the base point $\ast$ only for finitely many indices $x \in X$. It follows by the results
 in \cite{Davis} and \cite{DK} that $Z$ is an Eilenberg-MacLane space for $M = \oplus_{x \in X} H_x$. Notice that $Z$ has a natural cell structure.
 There is a single $0$-cell, given by the family $(y_x)_{x \in X}$ with $y_x = \ast$ for all $x$. For $n \geq 1$, an $n$-cell can be seen as a 
 product $c_1 \times \cdots \times c_k$ of cells of $Y$, 
 supported by some tuple $(x_1, \ldots, x_k) \in X^k$, such that $dim(c_1) + \ldots + dim(c_k) = n$.
 
 There is an obvious action of $G$ on $Z$. On the other hand, $M$ acts freely on the universal cover $E$ of $Z$. By putting together 
 these two actions, we get an action of $\Gamma = M \rtimes G$ on $E$. Clearly the $2$-skeleton of $E$ has finitely many $\Gamma$-orbits of 
 cells. Moreover, since the action of $M$ is free, the stabilizer subgroups are all conjugate to subgroups of $G$, and can be described as follows:
  \begin{enumerate}
  \item The stabilizer subgroup of any $0$-cell is a conjugate of $G$;
  \item For $n \geq 1$, the stabilizer subgroup of each $n$-cell contains a conjugate of the stabilizer $G_{(x_1, \ldots, x_n)}$ of some 
        $(x_1, \ldots, x_n) \in X^n$ as a finite index subgroup.
 \end{enumerate}
 We make stabilizers of $n$-cells correspond to stabilizers of $n$-tuples (rather than $k$-tuples, for $k \leq n$) by repeating some indices if 
 necessary. Also, the reason why we need to pass to a finite index subgroup is that cells of $Z$ written as products of cells of $Y$ may contain some
 repetition. For instance, a cell of $Z$ that arises as a product $c \times c$, supported by $(x_1,x_2)$, is also fixed by elements of $G$
 that interchange $x_1$ and $x_2$. This will also happen in the $\Gamma$-action on the universal cover $E$. 
 
 For groups admitting sufficiently nice actions on complexes, there is a criterion for the $\Sigma$-invariants.
 
 \begin{theorem}[\cite{Meinert}, Theorem B]
 Let $E'$ be a $2$-dimensional $1$-connected complex. Suppose that a group $\Gamma$ acts on $E'$ with finitely many orbits of cells. If 
 $\chi: \Gamma \to \mathbb{R}$ is a character such that $[\chi |_{\Gamma_c}] \in \Sigma^{2-dim(c)}(\Gamma_c)$ for all cells $c$ in $E'$, then 
 $[\chi] \in \Sigma^2(\Gamma)$.
 \end{theorem}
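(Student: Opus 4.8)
The plan is to prove this by a ``Brown-type'' argument for the homotopical invariant $\Sigma^2$, converting the hypothesis on cell stabilizers into a connectivity statement about a $\chi$-filtration of an $F_2$-complex for $\Gamma$. As a preliminary remark, the hypotheses already force $\Gamma$ to be of type $F_2$, so that $\Sigma^2(\Gamma)$ is defined: the condition $[\chi|_{\Gamma_c}] \in \Sigma^{2-\dim c}(\Gamma_c)$ presupposes that each $0$-cell stabilizer has type $F_2$, each $1$-cell stabilizer type $F_1$, and each $2$-cell stabilizer type $F_0$, and, $\Gamma$ acting cocompactly on the $1$-connected complex $E'$, the standard finiteness criterion for such actions (see \cite{Meinert}) yields type $F_2$ for $\Gamma$.

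First I would build, by the usual inductive ``blow-up'' construction (replacing the orbit of each cell $c$ of $E'$ by a free contractible $\Gamma_c$-CW-complex whose $(2-\dim c)$-skeleton is $\Gamma_c$-cocompact, and gluing these via mapping cylinders), a free $\Gamma$-CW-complex $\widehat{E}$ of dimension $2$ with a $\Gamma$-equivariant map $\widehat{E}\to E'$ inducing isomorphisms on $\pi_0$ and $\pi_1$; thus $\widehat{E}$ is $1$-connected and $\Gamma$-cocompact, i.e.\ it is the $2$-skeleton of the universal cover of a $K(\Gamma,1)$ with finite $2$-skeleton. I would then equip $\widehat{E}$ with a $\chi$-equivariant height function $h$: choose values on the free $0$-skeleton with $h(\gamma v)=h(v)+\chi(\gamma)$ and extend affinely over cells. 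By cocompactness, on the subcomplex of $\widehat{E}$ lying over a given orbit of cells $c$ of $E'$ the function $h$ differs by a uniformly bounded amount from the pullback of a $\chi|_{\Gamma_c}$-equivariant height function on the chosen model of $E\Gamma_c$. By definition of $\Sigma^{2-\dim c}(\Gamma_c)$, the super-level filtration of that model is essentially $(1-\dim c)$-connected; so the pieces of $\widehat{E}$ over $0$-cells, $1$-cells and $2$-cells have filtrations that are, respectively, essentially $1$-connected, essentially $0$-connected, and (trivially) essentially $(-1)$-connected. By the homotopical description of $\Sigma^2$ (\cite{RenzThesis}; cf.\ also \cite{Renz}, Theorem 2), it suffices to show that the filtration $\{\widehat{E}_{h\geq t}\}_{t\in\mathbb{R}}$ of the whole complex is essentially $1$-connected.

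The main obstacle, and the technical core of the proof, is precisely this last step: a ``controlled Mayer--Vietoris'' / gluing argument for essential connectivity. One assembles $\widehat{E}$ from its stabilizer pieces by induction on $\dim c$, using at each stage the gluing lemma for filtered complexes: if $A\cup B$ is such that $A\cap B$ is essentially $(n-1)$-connected and $A$, $B$ are essentially $n$-connected, all with uniformly controlled shift functions, then $A\cup B$ is essentially $n$-connected. Attaching the $2$-cell pieces along the $1$-cell pieces, then the result along the $0$-cell pieces, one propagates essential $1$-connectivity up to $\widehat{E}$ and concludes $[\chi]\in\Sigma^2(\Gamma)$. The delicate points are: (i) producing shift functions $t\mapsto s(t)$ that are \emph{uniform} over the finitely many orbit types, which is where $\Gamma$-cocompactness (and the bounded-diameter estimate for $h$ on each piece) is essential; (ii) the bookkeeping for $\pi_1$ and basepoints, since ``essentially $1$-connected'' is a statement about both $\pi_0$ and $\pi_1$ of the super-level sets, and basepoints must be tracked coherently as one descends; and (iii) ensuring that a nullhomotopy or path produced inside one stabilizer piece can be matched along the gluing locus with data coming from the neighbouring pieces without breaking the height control. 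Once these uniform estimates are set up, the induction over the (at most three) dimensions of cells closes the argument.
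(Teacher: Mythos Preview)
The paper does not give its own proof of this statement: it is quoted verbatim as Theorem~B of Meinert \cite{Meinert} and then immediately applied to the $2$-skeleton of the complex $E$ built just before. There is therefore nothing in the paper to compare your argument against; the authors treat this result as a black box imported from the literature.

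That said, your sketch is a reasonable outline of the kind of argument one expects for such a theorem, and is in the spirit of Meinert's methods: replace the (possibly non-free) action on $E'$ by a free cocompact action on a blown-up complex $\widehat{E}$, equip it with a $\chi$-equivariant height function, and then use the characterization of $\Sigma^2$ via essential $1$-connectivity of super-level sets. The points you flag as delicate---uniformity of shift functions over the finitely many orbit types, basepoint bookkeeping for $\pi_1$, and height-controlled gluing along the stabilizer pieces---are indeed the places where the real work lies. If you want to turn this into a self-contained proof, you will need to make the gluing lemma for essential connectivity precise (with explicit control on the shift), and to spell out the inductive step over $\dim c \in \{0,1,2\}$; as written, the proposal is a correct plan rather than a proof.
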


 We apply the theorem above with $E'$ being the $2$-skeleton of $E$. We obtain:
  
  \begin{prop} \label{suffcond}
  Suppose that $\Gamma = H \wr_X G$ is finitely presented and let $\chi: \Gamma \to \mathbb{R}$ be a non-zero character such that $\chi |_M = 0$.
  Suppose also that all properties below hold:
   \begin{enumerate}
    \item[(1)] $[\chi |_G] \in \Sigma^2(G)$;
    \item[(2)] $[\chi |_{G_x}] \in \Sigma^1(G_x)$ for all $x \in X$ and
    \item[(3)] $\chi |_{G_{(x,y)}} \neq 0$ for all $(x,y) \in X^2$.
      \end{enumerate}
  Then $[\chi] \in \Sigma^2(\Gamma)$.
  \end{prop}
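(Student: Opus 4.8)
The plan is to feed the action of $\Gamma$ on the complex $E$ constructed above into the criterion of Meinert (\cite{Meinert}, Theorem B). Take $E'$ to be the $2$-skeleton of $E$. Since $Z$ is a $K(M,1)$-complex, its universal cover $E$ is contractible, so $E'$ is $1$-connected; and, as already observed, $\Gamma$ acts on $E'$ with finitely many orbits of cells (this uses that $G$ acts with finitely many orbits on $X$ and on $X^2$, which holds because $\Gamma$ is finitely presented, by \cite{Cornulier}). It therefore suffices to check that $[\chi|_{\Gamma_c}] \in \Sigma^{2-\dim(c)}(\Gamma_c)$ for every cell $c$ of $E'$, using the description of the cell stabilizers given above.

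Before doing that I would record how $\chi$ interacts with these stabilizers. Since $\chi|_M = 0$, the character $\chi$ factors through $\Gamma/M \cong G$; hence for $\gamma \in \Gamma$ and $g \in G$ one has $\chi(\gamma g \gamma^{-1}) = \chi(g)$. Consequently, if a subgroup of $G$ is conjugated inside $\Gamma$ and $\chi$ is then restricted to it, the outcome agrees, via the conjugation isomorphism, with the restriction of $\chi$ to the original subgroup of $G$. As the $\Sigma$-invariants are invariant under group automorphisms, membership of the restricted character in the relevant $\Sigma$-set is unaffected by such conjugation; so in the verification I may replace every ``conjugate of $G$'' (resp.\ of $G_x$, resp.\ of $G_{(x,y)}$) by $G$ itself (resp.\ $G_x$, resp.\ $G_{(x,y)}$).

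Now I would run through the three types of cells. If $c$ is a $0$-cell, then $\Gamma_c$ is a conjugate of $G$, which is finitely presented because $\Gamma$ is; the required condition $[\chi|_{\Gamma_c}] \in \Sigma^2(\Gamma_c)$ is then exactly hypothesis $(1)$. If $c$ is a $1$-cell, it is supported on a single index $x \in X$ and $\Gamma_c$ contains a conjugate of $G_x$ as a finite-index subgroup; here $G_x$ is finitely generated (again by \cite{Cornulier}), hence so is $\Gamma_c$, and the finite-index invariance of $\Sigma^1$ together with the previous paragraph shows $[\chi|_{\Gamma_c}] \in \Sigma^1(\Gamma_c)$ is equivalent to hypothesis $(2)$. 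Finally, if $c$ is a $2$-cell, then after assigning to it the pair $(x,y) \in X^2$ obtained by repeating an index when $c$ comes from a single $2$-cell of $Y$, the stabilizer $\Gamma_c$ contains a conjugate of $G_{(x,y)}$ as a finite-index subgroup; the required condition $[\chi|_{\Gamma_c}] \in \Sigma^0(\Gamma_c)$ merely asks that $\chi|_{\Gamma_c} \neq 0$, and this holds because hypothesis $(3)$ already makes $\chi$ nonzero on the finite-index subgroup. Having verified all the hypotheses of Meinert's criterion, we conclude $[\chi] \in \Sigma^2(\Gamma)$.

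I do not expect a single deep obstacle: the argument is essentially an unwinding of the cell-stabilizer picture. The step needing the most care is the bookkeeping of stabilizers — in particular remembering that a $2$-cell of $Z$ arising from one $2$-cell of $Y$ lives over a single index and must be matched with the diagonal pair $(x,x)$, which is exactly why hypothesis $(3)$ is phrased over all of $X^2$ and not only off the diagonal — together with the use of finite-index invariance for $\Sigma^1$ and the observation that the $\Sigma^0$-condition on $2$-cell stabilizers is just non-triviality of the restricted character.
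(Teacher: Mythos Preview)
Your proposal is correct and follows exactly the paper's approach: apply Meinert's Theorem B to the $2$-skeleton $E'$ of the complex $E$ built just before the proposition, and translate the cell-stabilizer conditions into hypotheses (1)--(3) using invariance of the $\Sigma$-invariants under conjugation and passage to finite-index subgroups. In fact you have spelled out more detail than the paper, which simply states that the proposition is obtained by applying Meinert's theorem with $E'$ and then remarks afterward that referring only to stabilizers inside $G$ is justified by isomorphism-invariance of $\Sigma^1$ and that item (3) is equivalent to non-triviality of $\chi$ on the actual $2$-cell stabilizers.
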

  
 The fact that we can state the proposition above with reference only to the stabilizers contained in $G$ follows from the invariance of the 
 $\Sigma^1$-invariants under isomorphisms (Proposition B1.5 in \cite{Strebel}). It is also clear that item $(3)$ is equivalent with asking that
 the restriction of $\chi$ to the actual stabilizers is non zero.

  By Theorem \ref{retractsthm}, if $[\chi] \in \Sigma^2(\Gamma)$ and $\chi |_M = 0$, then $[\chi |_G] \in \Sigma^2(G)$. We can also show that condition 
  $(3)$ of Proposition \ref{suffcond} is necessary.

   \begin{lemma} \label{orbitaX2}
If  $\chi |_{G_{(x,y)}} = 0$, then the monoid $G_{\chi}$ can not have finitely many orbits on $G \cdot (x,y)$.
 \end{lemma}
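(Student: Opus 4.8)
The plan is to argue by contradiction, converting the hypothesis about $G_{\chi}$-orbits into an artificial lower bound on the values of $\chi$ on $G$, and then contradicting the fact that the character induced on $G$ is nonzero, hence unbounded below.

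First I would identify the $G$-set $G \cdot (x,y)$ with the coset space $G/G_{(x,y)}$ via the orbit map $g \mapsto g \cdot (x,y)$, whose fibers are exactly the left cosets of the stabilizer $G_{(x,y)}$. Under this identification the submonoid $G_{\chi} = \{g \in G \mid \chi(g) \geq 0\}$ acts by left multiplication, and the assumption that $G_{\chi}$ has finitely many orbits on $G \cdot (x,y)$ translates into the existence of finitely many double cosets $G_{\chi} g_1 G_{(x,y)}, \ldots, G_{\chi} g_k G_{(x,y)}$ whose union is all of $G$. This bookkeeping step is the only place where one has to be a little careful, but it is routine: a $G_{\chi}$-orbit of $g G_{(x,y)}$ corresponds precisely to the subset $G_{\chi} g G_{(x,y)}$ of $G$.

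Next I would use the hypothesis $\chi |_{G_{(x,y)}} = 0$ to estimate $\chi$ on each such double coset. If $g \in G_{\chi} g_i G_{(x,y)}$, write $g = u g_i v$ with $u \in G_{\chi}$ and $v \in G_{(x,y)}$; then $\chi(g) = \chi(u) + \chi(g_i) + \chi(v) \geq \chi(g_i)$, since $\chi(u) \geq 0$ and $\chi(v) = 0$. Therefore $\chi(g) \geq c := \min_{1 \leq i \leq k} \chi(g_i)$ for every $g \in G$, so $\chi(G)$ is bounded below in $\mathbb{R}$.

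Finally I would derive the contradiction. Since we are in the setting $\chi |_M = 0$ with $\chi$ nonzero, the character induced on $G \cong \Gamma/M$ is nonzero, so there is $h \in G$ with $\chi(h) < 0$; then $\chi(h^n) = n\chi(h) \to -\infty$, which is incompatible with the lower bound $c$. Hence no such finite collection of orbits can exist. I do not expect any genuine obstacle here: the argument is short once the hypothesis $\chi |_{G_{(x,y)}} = 0$ is combined with the coset description of the orbit, and the mild "main point" is just making the double-coset reformulation precise.
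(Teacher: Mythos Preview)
Your proof is correct and follows essentially the same approach as the paper: both arguments assume finitely many $G_{\chi}$-orbits, decompose an arbitrary $g\in G$ as $u g_i v$ with $u\in G_{\chi}$ and $v\in G_{(x,y)}$, and use $\chi(u)\geq 0$, $\chi(v)=0$ together with the unboundedness of $\chi|_G$ (which comes from the ambient hypothesis $\chi|_M=0$, $\chi\neq 0$) to reach a contradiction. The only cosmetic difference is that the paper picks a specific $g$ with $\chi(g)$ below $\min_j\chi(g_j)$ and exhibits an element of $G_{(x,y)}$ with positive $\chi$-value, whereas you first deduce the uniform lower bound $\chi(g)\geq\min_i\chi(g_i)$ and then contradict it; these are the same computation read in opposite directions.
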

 
 \begin{proof}
  Suppose that $G \cdot(x,y) = \bigcup_{j=1}^n G_{\chi} \cdot (x_j,y_j)$ and choose $g_1, \ldots, g_n \in G$ such that $(x_j,y_j) = g_j \cdot (x,y)$. 
  Choose $g \in G$ such that 
  \[\chi(g) < min \{\chi(g_j) \hbox{ } |\hbox{ } 1 \leq j \leq n\}.\]
  Since $g \cdot (x,y) \in \bigcup_{j=1}^n G_{\chi} \cdot (x_j,y_j)$, there
  must be some $g_0 \in G_{\chi}$ and $1 \leq j \leq n$ such that $g \cdot (x,y) = g_0 \cdot (x_j,y_j) = g_0 g_j (x,y)$. But then 
  $g^{-1}g_0 g_j \in G_{(x,y)}$, with:
  \[\chi(g^{-1}g_0 g_j) = \chi(g_0) + (\chi(g_j) - \chi(g)) > 0,\]
  so $\chi |_{G_{(x,y)}} \neq 0$.
 \end{proof}

 \begin{prop}  \label{condneces1}
 Let $\Gamma = H \wr_X G$ be a finitely presented wreath product and let $\chi: \Gamma \to \mathbb{R}$ be a non-zero character. Let 
 $M = \oplus_{x \in X} H_x \subseteq \Gamma$ and suppose that $\chi |_M = 0$. If $\chi |_{G_{(x,y)}} = 0$ for some $(x,y) \in X^2$, then $[\chi] \notin \Sigma^2(\Gamma)$. 
 \end{prop}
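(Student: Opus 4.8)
The plan is to assume $[\chi]\in\Sigma^2(\Gamma)$ and, via Lemma~\ref{lemaRfg}, to force the submonoid $G_\chi$ to have only finitely many orbits on $G\cdot(x,y)$, contradicting Lemma~\ref{orbitaX2} (which applies precisely because $\chi|_{G_{(x,y)}}=0$).

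First I would extract the data I need on $G$ and reduce to the off-diagonal case. Since $\Sigma^2(\Gamma)\subseteq\Sigma^1(\Gamma)$, the hypotheses $[\chi]\in\Sigma^2(\Gamma)$ and $\chi|_M=0$ already give, by Theorem~\ref{thmSigma1}(1), that $[\chi|_G]\in\Sigma^1(G)$ and $\chi|_{G_z}\neq 0$ for every $z\in X$ (the latter is Proposition~\ref{pro2}). In particular $x\neq y$: otherwise $G_{(x,y)}=G_x$ and $\chi|_{G_x}=0$, contradicting what we just said. So from now on $x\neq y$, the copies $H_x,H_y$ commute in $\Gamma$, and $G_{(x,y)}$ centralises both; note also that $\chi$ vanishes on every conjugate of $G_{(x,y)}$, being conjugation-invariant.

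Next I would bring in the machinery. I would work with the finite presentation of $\Gamma$ from \cite{Cornulier} — generating set $(\bigcup_i Y_i)\cup Z$, with defining relations including the inter-orbit commutators $[{}^{g}y_1,y_2]$ — which is legitimate because $1$-connectedness of $C_\chi$ is independent of the finite presentation chosen. Lemma~\ref{lemaRfg} then says $R(\chi)$ is generated over $F(\mathcal{X},\chi)$ by a finite set $S$. Now fix a nontrivial $h\in Y$ and a double-coset representative $g_0$ with $(x,y)$ in the $G$-orbit of $(x_1,g_0x_1)$, and for a van Kampen diagram $D$ over this presentation let $\mathrm{Pairs}(D)\subseteq G\cdot(x_1,g_0x_1)$ be the set of pairs of copies carried by the faces of $D$ labelled by conjugates of $[{}^{g_0}h,h]^{\pm1}$. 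The crucial arithmetic is that, since $\chi|_M=0$, the $\chi$-value of any prefix of a word over $\mathcal{X}^{\pm1}$ equals the $\chi$-value of its image in $G$; hence for $u\in F(\mathcal{X},\chi)$ the image of $u$ in $G=\Gamma/M$ lies in $G_\chi$, and assembling, for $r=\prod_j{}^{u_j}s_j\in R(\chi)$, a van Kampen diagram by gluing fixed diagrams for the $s_j\in S$ along the $\chi$-nonnegative words $u_j$ yields $v_\chi(D)\geq 0$ with $\mathrm{Pairs}(D)$ contained in a fixed union $\Pi$ of finitely many $G_\chi$-orbits (the $G_\chi$-saturation of the finitely many pairs occurring in the chosen diagrams for the elements of $S$).

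For the converse I would produce, for every $\pi\in G\cdot(x,y)$, a relation in $R(\chi)$ that witnesses a $G_\chi$-translate of $\pi$; this is where the hypothesis $\chi|_{G_{(x,y)}}=0$ is consumed. Since $\chi$ is constant on any coset of $G_{(x_1,g_0x_1)}$ and can be made arbitrarily large along the $G_\chi$-orbit of $\pi$ (here we use $\chi|_G\neq 0$), after moving $\pi$ within its $G_\chi$-orbit I may pick $g\in G$ whose action sends $(x_1,g_0x_1)$ to $\pi$ and with $\chi(g),\chi(gg_0)\geq 0$. Using $[\chi|_G]\in\Sigma^1(G)$, the vertices $1$ and $g$ lie in one component of $Cay(G;Z)_{\chi|_G}$, so there are words $\bar g,\bar g'$ representing $g$ and $g^{-1}$ for which the ``out-and-back'' path $1\to g\to 1$ stays at $\chi$-height $\geq 0$; then $r_\pi:=\bar g\,[{}^{g_0}h,h]\,\bar g'$ lies in $R(\chi)$, and its lollipop filling is a $\chi$-nonnegative van Kampen diagram with a single inter-orbit-commutator face, sitting exactly at $\pi$. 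The main obstacle is to upgrade this into the statement that the $G_\chi$-orbit of $\pi$ must already appear in $\Pi$ — in other words that the class of $\mathrm{Pairs}$ modulo $G_\chi$ is a genuine invariant of the element $r_\pi\in R(\chi)$ and not merely of a particular filling. Granting that, every $G_\chi$-orbit of $G\cdot(x,y)$ lies in the finite set $\Pi$, so $G_\chi$ has finitely many orbits on $G\cdot(x,y)$, which is the desired contradiction with Lemma~\ref{orbitaX2}. Establishing this invariance — that is, controlling how inter-orbit-commutator faces can be created, destroyed, or slid past one another by a homotopy carried out inside $C_\chi$ at bounded $\chi$-height — is the delicate point, and I expect it to be the real content of the proof.
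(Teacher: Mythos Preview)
Your overall strategy matches the paper's: both route through Lemma~\ref{lemaRfg} to contradict Lemma~\ref{orbitaX2}, and you have correctly isolated the crux as the ``invariance'' step. As written, however, that step is a genuine gap, and the paper resolves it by an algebraic rather than diagrammatic device.

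Instead of tracking $\mathrm{Pairs}(D)$ for individual fillings, the paper lifts the generating set $\mathcal{X}$ to the group $\Gamma_0=(\ast_{x\in X}H_x)\rtimes G$, so that $\Gamma=\Gamma_0/N$ with $N$ the normal closure in the free product of all $[H_x,H_y]$. Lemma~\ref{orbitaX2} yields a strictly increasing chain $I_1\subsetneq I_2\subsetneq\cdots$ of $G_\chi$-invariant subsets of $X^2$ with union $X^2$; the normal closures $N_j$ of $\{[H_x,H_y]:(x,y)\in I_j\}$ are then $(\ast_x H_x)\rtimes G_\chi$-invariant, and the induced monoid maps $p_j\colon F(\mathcal{X},\chi)\to(\ast_x H_x/N_j)\rtimes G_\chi$ give $F(\mathcal{X},\chi)$-invariant subgroups $R_j:=p_j^{-1}(1)\subsetneq R(\chi)$ with $\bigcup_j R_j=R(\chi)$. (Strictness uses Theorem~\ref{p1}: since $[\chi|_G]\in\Sigma^1(G)$ and $\chi|_{G_x}\neq 0$ for all $x$, the lift $\chi_0$ has $[\chi_0]\in\Sigma^1(\Gamma_0)$, so $p_0$ is onto and any $n\in N\smallsetminus N_j$ lifts to $R(\chi)\smallsetminus R_j$.) Such a chain immediately precludes finite generation of $R(\chi)$ over $F(\mathcal{X},\chi)$. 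The ``invariance'' you want is precisely that your $r_\pi$ lies outside $R_j$ when the $G_\chi$-orbit of $\pi$ is not in $I_j$; this is a structural fact about the free product, not something readable off a particular diagram over Cornulier's presentation, since two fillings of the same boundary can differ by arbitrary consequences of the $H$-, $G$- and stabilizer-relations and hence create or cancel commutator faces uncontrollably. Passing to $\Gamma_0$ is exactly what turns ``which pairs are needed'' into a well-posed algebraic question.

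A secondary issue: your assertion that $1$-connectedness of $C_\chi$ is independent of the finite presentation is not correct; the definition of $\Sigma^2$ only asks for \emph{some} presentation. The paper handles this by first proving the result for presentations whose generating set lifts to $\Gamma_0$, and then, given an arbitrary finite presentation with $1$-connected $C_\chi$, enlarging it (via Lemma~3 of \cite{Renz}) to one of that special form while preserving $1$-connectedness, yielding the contradiction.
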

 
 \begin{proof} We may assume that $[\chi] \in \Sigma^1(\Gamma)$, otherwise there is nothing to do. Thus $[\chi |_G] \in \Sigma^1(G)$ and $\chi |_{G_x} \neq 0$ for all 
 $x \in X$ by Proposition \ref{pro2}.
 
 Let $\Gamma_0 = (\ast_{x \in X} H_x) \rtimes G$ and let $\mathcal{X} \subseteq \Gamma_0$ be a finite generating set. Note that $\Gamma$ is a quotient of
 $\Gamma_0$, so we can consider the following diagram:
 \[ \xymatrix{F(\mathcal{X}) \ar@{->>}[r]^{\pi_0}  \ar@{->>}[rd]_{\pi} & \Gamma_0  \ar@{->>}[d] \\
    & \Gamma.}
 \]
The homomorphism $\pi$ defines presentations for $\Gamma$ with generating set $\mathcal{X}$. We first show that for finite presentations of type 
 $\Gamma = \langle \mathcal{X} | \mathcal{R} \rangle$ (with $ker(\pi) = \langle \mathcal{R} \rangle^{F(\mathcal{X})}$) the complex 
 $Cay(\Gamma;  \langle \mathcal{X} | \mathcal{R} \rangle)_{\chi}$ can not be $1$-connected.
 
 Fix $\langle \mathcal{X} | \mathcal{R} \rangle$ such a presentation. We use the notations $F(\mathcal{X}, \chi)$ and $R(\chi)$ defined in Section 
 \ref{obs}. We want to show that $R(\chi)$ is not finitely generated over $F(\mathcal{X}, \chi)$, from what follows that
 $Cay(\Gamma; \langle \mathcal{X}|\mathcal{R} \rangle)_{\chi}$ is not $1$-connected by Lemma \ref{lemaRfg}.
 
   If $\chi |_{G_{(x,y)}} = 0$ then by Lemma \ref{orbitaX2} we can build a strictly increasing sequence
   \[ I_1 \subsetneq I_2 \subsetneq \ldots \subsetneq I_j \subsetneq \ldots\]
   of $G_{\chi}$-invariant subsets of $X^2$ such that $X^2 = \bigcup_j I_j$.
   
   Let $N$ be the normal subgroup of $\ast_{x \in X} H_x$ such that $M = (\ast_{x \in X} H_x)/N$. Note that $N$ admits an action by 
   $(\ast_{x \in X} H_x) \rtimes G$ (which defines the wreath product $H \wr_X G$). Let $N_j$ be the normal subgroup of  
   $\ast_{x \in x} H_x$ generated by the commutators $[H_x,H_y]$ with $(x,y) \in I_j$. Note that $N_1 \subsetneq N_2 \subsetneq \ldots$, that
   $N = \bigcup_j N_j$ and that each $N_j$ is $(\ast_{x \in X} H_x) \rtimes G_{\chi}$-invariant.
   
   Put \[\Gamma_{j,\chi} = \frac{ \ast_{x \in X} H_x}{N_j} \rtimes G_{\chi}.\]
   This is a well defined monoid under the operation we use to define the semi-direct product. This defines a sequence $\{ \Gamma_{j,\chi} \}_j$ of monoids
   that converges to $\Gamma_{\chi}$.
   
    Now, remember we have chosen $\mathcal{X}$ so that the projection $\pi_0: F(\mathcal{X}) \twoheadrightarrow \Gamma_0$ is well defined. Passing to monoids,
   we obtain a homomorphism
   \[p_0 : F(\mathcal{X}, \chi) \to (\Gamma_0)_{\chi_0},\]
   where $\chi_0$ is the obvious lift of $\chi$ to $\Gamma_0$. From $p_0$ we define
   \[p_j: F(\mathcal{X}, \chi) \to \Gamma_{j,\chi}\]
   for each $j \geq 1$. Let
   \[ R_j = p_j^{-1}(\{1\}).\]
   Notice that $R_j \subsetneq R(\chi)$ for all $j$. Indeed, it is clear that $\chi_0$ and $\chi$ restrict to the same homomorphisms on $G$
   and $G_x$, for all $x \in X$. Also, $\chi_0$ restricts to zero on $\ast_{x \in X} H_x$ by construction. So it follows from Theorem \ref{p1}
   that 
   $[\chi_0] \in \Sigma^1(\Gamma_0)$, since we are assuming that $[\chi] \in \Sigma^1(\Gamma)$. Thus $p_0$ is surjective 
   and any $n \in N \smallsetminus N_j$ defines an element in $R(\chi) \smallsetminus R_j$. Observe further that
   each that $R_j$ is actually a $F(\mathcal{X}, \chi)$-invariant subgroup of $R(\chi)$ and that $\bigcup_j R_j = R(\chi)$. The existence of
   the sequence $\{R_j\}_j$ implies that $R(\chi)$ can not be finitely generated over $F(\mathcal{X},\chi)$.

   For the general case, let $\langle \mathcal{X} | \mathcal{R} \rangle$ be any finite presentation for $\Gamma$ and suppose by contradiction that 
   $Cay(\Gamma; \langle \mathcal{X} | \mathcal{R} \rangle)_{\chi}$ is $1$-connected. From $\langle \mathcal{X} | \mathcal{R} \rangle$ we build
   another finite presentation $\langle \mathcal{X}' | \mathcal{R}' \rangle$ for $\Gamma$ with $\mathcal{X} \subseteq \mathcal{X}'$,
   $\mathcal{R} \subseteq \mathcal{R}'$ and satisfying the previous hypothesis (that is, $\mathcal{X}'$ is actually a generating set for $\Gamma_0$).
   For this, it suffices to add the necessary generators and include the relations that define them in $\Gamma$ in terms of the previous generating set
   $\mathcal{X}$. 
    It may be the case that $Cay(\Gamma; \langle \mathcal{X}' | \mathcal{R}' \rangle)_{\chi}$ is not $1$-connected anymore, but by Lemma 3 in \cite{Renz}, we can
   always enlarge $\mathcal{R}'$ to a (still finite) set $\mathcal{R}''$ so that 
   $Cay(\Gamma; \langle \mathcal{X}' | \mathcal{R}'' \rangle)_{\chi}$ is indeed 
   $1$-connected. This is done by adding the relations of the form $t^{-1}xtw_x^{-1}$,
   as in Theorem \ref{Renz2}. We arrive at a contradiction with the first part of the proof, since
   $\mathcal{X}'$ satisfies the previous hypothesis, that is, $\mathcal{X}'$ can be lifted to a generating set for $\Gamma_0$.
 \end{proof}
 
 The above proposition completes the proof of Theorem \ref{thmC} as stated in the introduction, since its last assertion (when we assume that $H$ has
 infinite abelianization) follows from Theorem \ref{BCKThm}.
 \end{section}
 
 \begin{section}{Applications to twisted conjugacy}
We now derive some consequences of the previous results to twisted conjugacy, more specifically to the study of Reidemeister numbers of automorphisms of 
wreath products. For this we start by considering the Koban invariant $\Omega^1$.
 
Given a finitely generated group $\Gamma$, endow $Hom(\Gamma, \mathbb{R})$ with an inner product structure, so that it makes sense to talk about angles in $S(\Gamma)$.
Denote by $N_{\pi/2}([\chi])$ the open neighborhood of angle $\pi/2$ and centered at $[\chi] \in S(\Gamma)$. Following Koban \cite{Koban}, we can define 
the invariant $\Omega^1(\Gamma)$ in terms of $\Sigma^1(\Gamma)$:
\[\Omega^1(\Gamma) = \{[\chi] \in S(\Gamma) \hbox{ }| \hbox{ } N_{\pi/2}([\chi]) \subseteq \Sigma^1(\Gamma)\}.\]
A proof of the fact that this does not depend on the inner product can be found in the above-mentioned paper, which contains the original definition of the invariant.

Let $\Gamma = H \wr_X G$ be a finitely generated wreath product. With some restrictions on the action by $G$ on $X$, we can obtain nice descriptions of
$\Omega^1(\Gamma)$. Notice that, since the invariant does not depend on the choice of inner product, we can assume that characters $[\chi], [\eta] \in S(\Gamma)$ such that $\chi |_G = 0$ and
$\eta |_M = 0$ are always orthogonal, and this will be done in the proposition below.

\begin{prop} \label{prop1}
Let $\Gamma = H \wr_X G$ be a finitely generated wreath product.
Suppose that  
\[ \Sigma^1(\Gamma) = \{[\chi] \in S(\Gamma) \hbox{ } | \hbox{ } \chi |_M \neq 0\},\]
where $M = \oplus_{x \in x} H_x \subseteq \Gamma$. Then
\[\Omega^1(\Gamma) = \{[\chi] \in S(\Gamma) \hbox{ }| \hbox{ } \chi |_G = 0\}.\]
\end{prop}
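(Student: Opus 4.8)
The plan is to prove the two inclusions separately, using the hypothesis that $\Sigma^1(\Gamma) = \{[\chi] \mid \chi|_M \neq 0\}$ to translate everything into concrete statements about characters. For a character $\psi$ on $\Gamma = M \rtimes G$, write $\psi = \psi_M + \psi_G$ for the decomposition induced by the direct sum decomposition $\mathrm{Hom}(\Gamma,\mathbb{R}) = \mathrm{Hom}(M,\mathbb{R})^{\Gamma} \oplus \mathrm{Hom}(G,\mathbb{R})$ (the first summand being the $G$-coinvariants of $\mathrm{Hom}(M,\mathbb{R})$, which is what genuinely extends to $\Gamma$); here $\psi_M$ is supported on $M$ and $\psi_G$ factors through $G$. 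By the orthogonality convention in the statement, characters with $\psi|_M = 0$ are orthogonal to characters with $\psi|_G = 0$, i.e.\ the two summands are orthogonal subspaces of $\mathrm{Hom}(\Gamma,\mathbb{R})$.

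For the inclusion $\supseteq$: suppose $[\chi] \in S(\Gamma)$ with $\chi|_G = 0$, so $\chi = \chi_M$ lies in the $M$-summand. I would show $N_{\pi/2}([\chi]) \subseteq \Sigma^1(\Gamma)$ by showing every $[\eta] \in N_{\pi/2}([\chi])$ satisfies $\eta|_M \neq 0$. Write $\eta = \eta_M + \eta_G$ with $\eta_M, \eta_G$ orthogonal; then the angle between $[\eta]$ and $[\chi]$ being strictly less than $\pi/2$ means $\langle \eta, \chi\rangle > 0$, hence $\langle \eta_M, \chi\rangle = \langle \eta, \chi\rangle > 0$ (since $\chi$ is in the $M$-summand and $\eta_G$ is orthogonal to it), so in particular $\eta_M \neq 0$, i.e.\ $\eta|_M \neq 0$. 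By hypothesis $[\eta] \in \Sigma^1(\Gamma)$. Thus $[\chi] \in \Omega^1(\Gamma)$.

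For the inclusion $\subseteq$: suppose $[\chi] \in \Omega^1(\Gamma)$; I must show $\chi|_G = 0$, equivalently $\chi_G = 0$. Suppose not, so $\chi_G \neq 0$. The idea is to produce a character $[\eta]$ at angle $\pi/2 - \varepsilon$ from $[\chi]$ with $\eta|_M = 0$, contradicting $N_{\pi/2}([\chi]) \subseteq \Sigma^1(\Gamma) = \{[\psi] \mid \psi|_M \neq 0\}$. Concretely, I would take a character $\xi$ supported on $G$ (so $\xi|_M = 0$) that is not orthogonal to $\chi$ — e.g.\ $\xi$ equal to $\chi_G$ itself, or a small perturbation of it; since $\chi_G \neq 0$ we have $\langle \chi, \chi_G\rangle = \|\chi_G\|^2 > 0$, so the angle between $[\chi]$ and $[\chi_G]$ is strictly less than $\pi/2$. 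Then $[\chi_G] \in N_{\pi/2}([\chi]) \subseteq \Sigma^1(\Gamma)$, but $\chi_G|_M = 0$, contradicting the assumed description of $\Sigma^1(\Gamma)$. Hence $\chi_G = 0$, i.e.\ $\chi|_G = 0$.

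The only subtlety — and the step I would be most careful about — is making the reduction to the orthogonal decomposition $\mathrm{Hom}(\Gamma,\mathbb{R}) = (\text{$M$-part}) \perp (\text{$G$-part})$ legitimate: one must check that characters of $\Gamma$ really do split as a sum of one vanishing on $M$ and one vanishing on $G$, that $\Omega^1$ is independent of the inner product so that we are free to choose the one making these two subspaces orthogonal (this is exactly the convention invoked in the statement, citing Koban \cite{Koban}), and that "angle $< \pi/2$" is correctly encoded by positivity of the inner product of representative vectors. Once that bookkeeping is in place, both inclusions are the short computations above; no deeper input about wreath products is needed beyond the hypothesis on $\Sigma^1(\Gamma)$.
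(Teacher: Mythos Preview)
Your proof is correct and follows essentially the same approach as the paper: both directions use the orthogonal decomposition of $\mathrm{Hom}(\Gamma,\mathbb{R})$ into the $M$-part and the $G$-part, and the key witness for the inclusion $\subseteq$ is exactly the character $\eta$ with $\eta|_M = 0$ and $\eta|_G = \chi|_G$, just as in the paper. Your write-up is slightly more explicit about the inner-product computations and the decomposition bookkeeping, but the argument is the same.
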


\begin{proof}
Let $[\chi] \in S(\Gamma)$ with $\chi |_G = 0$. Clearly $\chi |_M \neq 0$, so $[\chi] \in \Sigma^1(\Gamma)$. Furthermore, if $[\eta] \in N_{\pi/2}([\chi])$,
then $\eta |_M \neq 0$, otherwise $\chi$ and $\eta$ would be orthogonal. So $N_{\pi/2}([\chi]) \subseteq \Sigma^1(\Gamma)$ whenever $\chi |_G = 0$. 
On the other hand, if there were some $[\chi] \in \Omega^1(\Gamma)$ with $\chi |_G \neq 0$, then 
by taking $\eta: \Gamma \to \mathbb{R}$ defined by $\eta |_M = 0$ and $\eta |_G = \chi |_G$, we would have that $[\eta] \in N_{\pi/2}([\chi])$, but 
$[\eta] \notin \Sigma^1(\Gamma)$. 
\end{proof}

For any group $V$, we denote by $V^{ab}$ its abelianization. By Theorem \ref{thmSigma1}, if the $G$-action on $X$ does not contain orbits 
composed by only one element, then many conditions imply the hypothesis on the description of $\Sigma^1(\Gamma)$, such as:
\begin{enumerate}
 \item $(G_x)^{ab}$ is finite for some $x \in X$, or
 \item The set $\{[\chi] \in \Sigma^1(G) \hbox{ } | \hbox{ } \chi |_{G_x} \neq 0\}$ is empty for some $x \in X$. 
\end{enumerate}
This includes the cases where the $G$-action is free (in particular the regular wreath products $\Gamma = H \wr G$) and the case where $\Sigma^1(G) = \emptyset$ .

Recall that the \textit{Reidemeister number} $R(\varphi)$, for a group isomorphism $\varphi: V \to V$, is defined as the number of orbits of the 
$\varphi$-twisted conjugacy action of $V$ on itself. A connection between the invariant $\Omega^1$ and Reidemeister numbers was studied by Koban 
and Wong \cite{KobanWong}. Recall that a character $\chi$ is \textit{discrete} if its image is infinite cyclic.

\begin{theorem}[\cite{KobanWong}, Theorem 4.3] \label{teoKW}
 Let $G$ be a finitely generated group and suppose that $\Omega^1(G)$ contains only discrete characters.
 \begin{enumerate}
  \item[(1)] If $\Omega^1(G)$ contains only one element, then $G$ is of type $R_{\infty}$, that is, $R(\varphi) = \infty$ for all $\varphi \in Aut(G)$.
  \item[(2)] If $\Omega^1(G)$ has exactly two elements, then there is a subgroup $N \subseteq Aut(G)$, with $[Aut(G): N]=2$, such that $R(\varphi) = \infty$ 
        for all $\varphi \in N$.
 \end{enumerate}
\end{theorem}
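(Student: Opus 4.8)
The plan is to reduce the theorem to two elementary facts about how $Aut(G)$ interacts with $\Sigma$-theory. First, every $\varphi \in Aut(G)$ induces a permutation of the character sphere $S(G)$ sending $[\chi]$ to $[\chi\circ\varphi]$, and this permutation preserves $\Sigma^1(G)$, hence also $\Omega^1(G)$. Indeed, since $\varphi$ is an isomorphism, naturality of $\Sigma^1$ under isomorphisms (Proposition B1.5 in \cite{Strebel}) gives $[\chi] \in \Sigma^1(G) \Leftrightarrow [\chi\circ\varphi] \in \Sigma^1(G)$; and because $\Omega^1(G)$ is built from $\Sigma^1(G)$ via the $\pi/2$-neighborhood condition, and that construction does not depend on the chosen inner product on $Hom(G,\mathbb{R})$, one may compute it using the inner product pulled back along $\varphi^*$, with respect to which $\varphi^*$ is an isometry carrying $N_{\pi/2}([\chi])$ onto $N_{\pi/2}([\chi\circ\varphi])$ and $\Sigma^1(G)$ onto itself. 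Thus $Aut(G)$ permutes the finite set $\Omega^1(G)$, and I will say that $\varphi$ \emph{fixes} $[\chi]$ when $[\chi\circ\varphi]=[\chi]$.

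The second ingredient is that a \emph{discrete} character which is fixed as a class by $\varphi$ is automatically fixed as a homomorphism, and that such a fixed discrete character forces $R(\varphi)=\infty$. For the first half: if $\chi$ is discrete, rescale it within its class so that $\chi(G)=\mathbb{Z}$; if $\varphi$ fixes $[\chi]$ then $\chi\circ\varphi = r\chi$ for some $r>0$, but $\varphi$ is surjective, so $\chi\circ\varphi$ again has image $\mathbb{Z}$, whence $r\mathbb{Z}=\mathbb{Z}$ and $r=1$, i.e. $\chi\circ\varphi=\chi$. For the second half: if $\chi:G\twoheadrightarrow\mathbb{Z}$ and $\chi\circ\varphi=\chi$, then for all $g,h\in G$ one has $\chi(g h \varphi(g^{-1})) = \chi(g)+\chi(h)-\chi(\varphi(g)) = \chi(h)$, so $\chi$ is constant on each orbit of the $\varphi$-twisted conjugacy action; since $\chi$ attains infinitely many values, there are infinitely many such orbits, i.e. $R(\varphi)=\infty$.

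Granting these, the theorem follows quickly. For (1), if $\Omega^1(G)=\{[\chi]\}$ with $\chi$ discrete, then by the first step every $\varphi\in Aut(G)$ fixes $[\chi]$, hence $\chi\circ\varphi=\chi$, hence $R(\varphi)=\infty$; so $G$ is of type $R_\infty$. For (2), if $\Omega^1(G)=\{[\chi_1],[\chi_2]\}$, the permutation action of $Aut(G)$ on this two-element set is a homomorphism $Aut(G)\to\mathbb{Z}/2$; let $N$ be its kernel. Then $[Aut(G):N]\leq 2$, and equals $2$ exactly when some automorphism interchanges the two classes — which is the situation presupposed by the statement; if instead the action is trivial, one is back in case (1) and $G$ is already $R_\infty$. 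Every $\varphi\in N$ fixes $[\chi_1]$, so $\chi_1\circ\varphi=\chi_1$ and $R(\varphi)=\infty$ by the same argument.

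The step needing the most care is the automorphism-invariance of $\Omega^1(G)$, which rests squarely on the independence of its definition from the inner product; once that is in hand, the remaining arguments are formal manipulations with twisted conjugacy and with images of homomorphisms. The one genuine subtlety in part (2) is the precise value of the index: the kernel $N$ always has index $1$ or $2$, the value $2$ occurring precisely when $Aut(G)$ acts nontrivially on $\Omega^1(G)$.
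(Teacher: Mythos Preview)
The paper does not prove this theorem; it is quoted without proof as a result of Koban and Wong \cite{KobanWong}, so there is no in-paper argument to compare against. Your proof is correct and is in fact the standard argument: automorphism-invariance of $\Omega^1$ (via invariance of $\Sigma^1$ under isomorphisms together with independence of the defining inner product), the observation that a discrete character fixed as a class is fixed as a homomorphism, and the elementary fact that a $\varphi$-invariant surjection onto $\mathbb{Z}$ separates infinitely many twisted conjugacy classes. You have also correctly flagged the one loose end: your construction yields a subgroup $N$ of index at most $2$, and the index is exactly $2$ only when some automorphism swaps the two classes; when the action is trivial the conclusion is strictly stronger ($G$ is already $R_\infty$), so the theorem as stated is still true in spirit, but as literally phrased it presupposes the nontrivial case.
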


\begin{crlr} \label{crlr1}
Let $\Gamma = H \wr_X G$ be a finitely generated wreath product and suppose that the $G$-action on $X$ is transitive. Suppose further that
$\Sigma^1(\Gamma)$ is as described in Proposition 
\ref{prop1} and that $H^{ab}$ has torsion-free rank $1$. Then there is a subgroup $N \subseteq Aut(\Gamma)$, with $[Aut(\Gamma):N]=2$, such that $R(\varphi) = \infty$ 
for all $\varphi \in N$.
\end{crlr}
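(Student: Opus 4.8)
The plan is to apply Theorem \ref{teoKW}(2) to $\Gamma = H \wr_X G$, so I need to verify its two hypotheses: that $\Omega^1(\Gamma)$ contains exactly two elements, and that both of these characters are discrete. First I would use Proposition \ref{prop1}, whose hypothesis we are assuming, to get
\[ \Omega^1(\Gamma) = \{[\chi] \in S(\Gamma) \mid \chi|_G = 0\}. \]
Now $\chi|_G = 0$ means precisely that $\chi$ factors through the quotient $\Gamma \to \Gamma/[\text{something}]$ killing $G$; more concretely, a character with $\chi|_G = 0$ is determined by its restriction to $M = \oplus_{x\in X} H_x$, and since the $G$-action on $X$ is transitive, such a $\chi$ must be $G$-invariant on $M$, hence constant across the copies $H_x$. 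So $\chi|_M$ is pulled back from a single character of $H$, i.e. from a character of $H^{ab}$. Since $H^{ab}$ has torsion-free rank $1$, the space of such characters is $1$-dimensional, and its unit sphere consists of exactly two antipodal points $[\chi]$ and $[-\chi]$. This gives $|\Omega^1(\Gamma)| = 2$.

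Next I would check discreteness. A character $\chi$ with $\chi|_G=0$ has image equal to the image of the induced character on $H^{ab}$ (via the diagonal identification above). Since $H^{ab}$ has torsion-free rank $1$, writing $H^{ab} \cong \mathbb{Z} \oplus (\text{torsion})$, any non-zero character of $H^{ab}$ has image a non-trivial subgroup of $\mathbb{R}$ that is the image of $\mathbb{Z}$ under a homomorphism $\mathbb{Z} \to \mathbb{R}$, hence infinite cyclic. Therefore both elements of $\Omega^1(\Gamma)$ are discrete. With both hypotheses of Theorem \ref{teoKW}(2) verified, we conclude that there is a subgroup $N \subseteq \mathrm{Aut}(\Gamma)$ of index $2$ with $R(\varphi) = \infty$ for all $\varphi \in N$.

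The main point requiring care is the identification of the characters vanishing on $G$ with characters of $H^{ab}$ when the action is transitive: one must argue that $G$-transitivity forces $\chi|_{H_x}$ to be "the same" for all $x$ (more precisely, that the family $(\chi|_{H_x})_x$ is determined by one of its members, because $\chi$ is a homomorphism and conjugation by $g \in G$ sending $H_x$ to $H_{gx}$ must be compatible with $\chi$, while $\chi(g) = 0$). This is the step where transitivity is genuinely used and where the torsion-free rank $1$ hypothesis on $H^{ab}$ translates into exactly two points in $\Omega^1$; once this is in place, discreteness is immediate and the rest is a direct invocation of Koban--Wong.
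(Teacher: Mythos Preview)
Your proposal is correct and follows essentially the same route as the paper: use Proposition~\ref{prop1} to identify $\Omega^1(\Gamma)$ with the characters vanishing on $G$, then use transitivity and the rank-$1$ hypothesis on $H^{ab}$ to see that this set consists of two discrete antipodal classes, and conclude via Theorem~\ref{teoKW}(2). The paper's proof is terser, simply exhibiting the two characters $\nu_1,\nu_2$ determined by a lift of a generator of the infinite cyclic factor of $H^{ab}$, whereas you spell out more carefully why transitivity forces any $\chi$ with $\chi|_G=0$ to be pulled back from a single character of $H^{ab}$ and why discreteness follows; this extra care is fine and makes explicit exactly the points the paper leaves to the reader.
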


\begin{proof}
By the hypothesis on $H^{ab}$ we have that 
\[ \Omega^1(\Gamma) = \{ [\nu_1], [\nu_2] \},\]
where $\nu_j(G) = 0$, $\nu_1(h) = 1$ and $\nu_2(h)=-1$ for some lift $h \in H$ of a generator for the infinite cyclic factor of $H^{ab}$. It suffices then
to apply part $(2)$ of Theorem \ref{teoKW}.                           
\end{proof}
The applications that we keep in mind are the finitely generated regular wreath products of the form $\mathbb{Z} \wr G$.

Gon\c{c}alves and Kochloukova \cite{GonKoch} exhibited other connections between the $\Sigma$-theory and the property $R_{\infty}$. Below we denote by 
$\Sigma^1(G)^{c}$ the complement of $\Sigma^1(G)$ in $S(G)$, that is, $\Sigma^1(G)^c = S(G) \smallsetminus \Sigma^1(G)$.

\begin{theorem}[\cite{GonKoch}, Corollary 3.4] \label{teoGK1}
Let $G$ be a finitely generated group and suppose that
\[ \Sigma^1(G)^{c} = \{ [\chi_1], \ldots, [\chi_n]\},\]
where $n \geq 1$ and each $\chi_j$ is a discrete character. Then there is a subgroup of finite index $N \subseteq Aut(G)$ such that 
$R(\varphi) = \infty$ for all $\varphi \in N$.
\end{theorem}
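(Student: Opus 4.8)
The plan is to combine the $Aut(G)$-equivariance of the $\Sigma^1$-invariant with the elementary observation that an automorphism which fixes a discrete character has infinite Reidemeister number. First I would record that $\Sigma^1(G)$ is an invariant of the abstract group $G$, hence is preserved by the natural $Aut(G)$-action on $S(G)$ given by $\varphi \cdot [\chi] = [\chi \circ \varphi^{-1}]$ (this is Proposition B1.5 in \cite{Strebel}; concretely, if $\mathcal{X}$ generates $G$ then so does $\varphi(\mathcal{X})$, and $\varphi$ induces an isomorphism of graphs carrying $Cay(G;\mathcal{X})_{\chi\circ\varphi}$ onto $Cay(G;\varphi(\mathcal{X}))_{\chi}$). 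Consequently the finite set $\Sigma^1(G)^c = \{[\chi_1],\ldots,[\chi_n]\}$ is $Aut(G)$-invariant, so the action restricts to a permutation representation $\rho \colon Aut(G) \to \mathrm{Sym}(\{[\chi_1],\ldots,[\chi_n]\})$. I would then set $N := \ker\rho$, which has $[Aut(G):N] \mid n!$, so $N$ is a finite-index subgroup of $Aut(G)$; by construction every $\varphi \in N$ satisfies $[\chi_j \circ \varphi] = [\chi_j]$ for all $j$.

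Next I would pin down exactly how elements of $N$ act on the characters themselves. For $\varphi \in N$ and each $j$, the relation $[\chi_j\circ\varphi] = [\chi_j]$ means $\chi_j\circ\varphi = r_j\,\chi_j$ for some $r_j \in \mathbb{R}_{>0}$. Since $\varphi$ is an automorphism, $(\chi_j\circ\varphi)(G) = \chi_j(\varphi(G)) = \chi_j(G)$, and this common image is infinite cyclic because $\chi_j$ is discrete. Comparing this with $(\chi_j\circ\varphi)(G) = r_j\cdot\chi_j(G)$ forces $r_j = 1$. Hence $\chi_j\circ\varphi = \chi_j$ for all $\varphi \in N$ and all $j$.

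Finally, I would fix $\varphi \in N$ and use the character $\chi := \chi_1$, which is available precisely because $n \geq 1$. If $h_1$ and $h_2$ lie in the same $\varphi$-twisted conjugacy class, say $h_2 = g h_1 \varphi(g^{-1})$, then
\[ \chi(h_2) = \chi(g) + \chi(h_1) - \chi(\varphi(g)) = \chi(h_1), \]
using $\chi\circ\varphi = \chi$. Thus $\chi$ is constant on $\varphi$-twisted conjugacy classes, so it induces a surjection from the set of classes onto the infinite group $\chi(G)\leqslant\mathbb{R}$; therefore $R(\varphi)=\infty$. Since this holds for every $\varphi\in N$, the theorem follows.

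The argument carries no real difficulty; the two places that need a little care are the $Aut(G)$-invariance of $\Sigma^1(G)^c$ (routine, being a consequence of the fact that $\Sigma^1$ is a group invariant) and the upgrade from $\chi_j\circ\varphi = r_j\chi_j$ to the genuine equality $\chi_j\circ\varphi=\chi_j$. The discreteness hypothesis on the $\chi_j$ is exactly what makes this upgrade work, and it is essential: without $r_1=1$ the displayed computation would only yield $\chi(h_2) = (1-r_1)\chi(g) + \chi(h_1)$, which need not be constant on twisted conjugacy classes.
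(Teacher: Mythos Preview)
Your proof is correct and is essentially the standard argument; note, however, that the paper does not give its own proof of this statement but cites it from \cite{GonKoch}, so there is no in-paper proof to compare against. The reasoning you give---$Aut(G)$ permutes the finite set $\Sigma^1(G)^c$, the kernel $N$ of this permutation action has finite index, discreteness upgrades $[\chi_j\circ\varphi]=[\chi_j]$ to $\chi_j\circ\varphi=\chi_j$, and a $\varphi$-invariant nonzero character separates infinitely many twisted conjugacy classes---is precisely the argument of Gon\c{c}alves and Kochloukova.
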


\begin{crlr}  \label{crlr2}
Let $\Gamma = H \wr_X G$ be a finitely generated wreath product. Once again, suppose that $\Sigma^1(\Gamma)$ is as described in Proposition \ref{prop1}.
Suppose further that $G^{ab}$ has torsion-free rank $1$. Then there is a subgroup of finite index $N \subseteq Aut(\Gamma)$ such that 
$R(\varphi) = \infty$ for all $\varphi \in N$.
\end{crlr}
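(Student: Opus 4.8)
The plan is to derive this as a direct consequence of Theorem \ref{teoGK1}. Since $\Gamma = H \wr_X G$ is finitely generated by hypothesis, to invoke that theorem for $\Gamma$ it suffices to verify that $\Sigma^1(\Gamma)^c$ is a non-empty finite set all of whose members are discrete characters.

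The first step is to identify $\Sigma^1(\Gamma)^c$ explicitly. Under the standing assumption (the hypothesis of Proposition \ref{prop1}) that $\Sigma^1(\Gamma) = \{[\chi] \in S(\Gamma) \mid \chi|_M \neq 0\}$, we get
\[ \Sigma^1(\Gamma)^c = \{ [\chi] \in S(\Gamma) \mid \chi|_M = 0 \}. \]
A non-zero character $\chi \colon \Gamma \to \mathbb{R}$ with $\chi|_M = 0$ factors through $\Gamma/M \simeq G$, inducing a non-zero character $\bar\chi$ of $G$ having the same image in $\mathbb{R}$ as $\chi$; conversely every non-zero character of $G$ pulls back to such a $\chi$, and $\chi$ is recovered from $\bar\chi$ by composing with the projection $\Gamma \to G$. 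Hence $[\chi] \mapsto [\bar\chi]$ is a bijection $\Sigma^1(\Gamma)^c \to S(G)$ that preserves images in $\mathbb{R}$.

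The second step uses the hypothesis that $G^{ab}$ has torsion-free rank $1$: this makes $Hom(G,\mathbb{R})$ one-dimensional, so $S(G)$ has exactly two elements $[\eta], [-\eta]$, and moreover every non-zero character of $G$ factors through the infinite cyclic free part of $G^{ab}$ and therefore has infinite cyclic image, i.e.\ is discrete. Transporting this back along the image-preserving bijection above, $\Sigma^1(\Gamma)^c = \{[\chi_1],[\chi_2]\}$ is a two-element set of discrete characters, and Theorem \ref{teoGK1} then produces a finite-index subgroup $N \leq Aut(\Gamma)$ with $R(\varphi) = \infty$ for all $\varphi \in N$.

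I do not expect a genuine obstacle: the argument is essentially an unwinding of definitions. The only point deserving a line of care is the discreteness of the characters of $\Gamma$ lying in $\Sigma^1(\Gamma)^c$, which is immediate from the remark that a character of $\Gamma$ vanishing on $M$ has the same image in $\mathbb{R}$ as the character it induces on $G \simeq \Gamma/M$. (Compare the proof of Corollary \ref{crlr1}, where the analogous two-element description is applied to $\Omega^1$ rather than to $\Sigma^1(\Gamma)^c$.)
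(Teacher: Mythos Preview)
Your proof is correct and follows essentially the same approach as the paper: identify $\Sigma^1(\Gamma)^c$ with the characters vanishing on $M$, use the rank-$1$ hypothesis on $G^{ab}$ to see that this set consists of exactly two discrete characters, and apply Theorem \ref{teoGK1}. The paper's proof is just a terser version of what you wrote.
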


\begin{proof}
Under the hypothesis above, we have 
\[\Sigma^1(\Gamma)^{c} = \{[\chi_1], [\chi_2]\},\]
where $\chi_j |_M =0$ and $\chi_1(g) = 1$ and $\chi_2(g) = -1$ for some $g \in G$ whose image in $G^{ab}$ is a generator of the infinite cyclic factor.
Then Theorem \ref{teoGK1} applies.
\end{proof}
This time we can take as an example the regular wreath product $\Gamma = H \wr \mathbb{Z}$.

We note that Gon\c{c}alves and Wong \cite{GonWong} and Taback and Wong \cite{TabackWong} had already obtained some results about the property $R_{\infty}$ for 
regular wreath products of the form $H \wr \mathbb{Z}$, with $H$ abelian or finite. Our results complement theirs in the sense that
it considers other basis groups $H$ and non-regular actions, but here we were limited to talk about Reidemeister numbers of automorphisms
contained in subgroups of finite index in the automorphism group. In the above-mentioned papers, on the other hand, the authors were able to determine 
positively the property $R_{\infty}$ for some choices of $H$.
\end{section}

\section*{Acknowledgements}
The author is supported by PhD grants 2015/22064-6 and 2016/24778-9, from S\~{a}o Paulo Research Foundation (FAPESP).
The author would like to thank his advisor, prof. Dessislava Kochloukova, for the guidance. He also wishes to thank P. H. Kropholler and A. 
Martino, for valuable discussions about their paper, and the anonymous referee, for many suggestions.

\end{document}